\theoremstyle{plain}
\newtheorem{theo}{Theorem}[section]
\crefname{theo}{Theorem}{Theorems}
\Crefname{theo}{Theorem}{Theorems}
\newtheorem{prop}[theo]{Proposition}
\crefname{prop}{Proposition}{Propositions}
\Crefname{prop}{Proposition}{Propositions}
\newtheorem{lem}[theo]{Lemma}
\crefname{lem}{Lemma}{Lemmas}
\Crefname{lem}{Lemma}{Lemmas}
\newtheorem{cor}[theo]{Corollary}
\crefname{cor}{Corollary}{Corollaries}
\Crefname{cor}{Corollary}{Corollaries}
\crefname{claim}{Claim}{Claims}
\Crefname{claim}{Claim}{Claims}
\crefname{property}{Property}{Properties}
\Crefname{property}{Property}{Properties}
\newtheorem{problem}[theo]{Problem}
\crefname{problem}{Problem}{Problems}
\Crefname{problem}{Problem}{Problems}
\theoremstyle{definition}
\newtheorem{defi}[theo]{Definition}
\crefname{defi}{Definition}{Definitions}
\Crefname{defi}{Definition}{Definitions}
\crefname{notation}{Notation}{Notations}
\Crefname{notation}{Notation}{Notations}
\crefname{convention}{Convention}{Conventions}
\Crefname{convention}{Convention}{Conventions}
\crefname{cond}{Condition}{Conditions}
\Crefname{cond}{Condition}{Conditions}
\newtheorem{assum}[theo]{Assumption}
\crefname{assum}{Assumption}{Assumptions}
\Crefname{assum}{Assumption}{Assumptions}
\theoremstyle{remark}
\newtheorem{rem}[theo]{Remark}
\crefname{rem}{Remark}{Remarks}
\Crefname{rem}{Remark}{Remarks}
\newtheorem{ex}[theo]{Example}
\crefname{ex}{Example}{Examples}
\Crefname{ex}{Example}{Examples}
\crefname{section}{Section}{Sections}
\Crefname{section}{Section}{Sections}
\crefname{subsection}{Subsection}{Subsections}
\Crefname{subsection}{Subsection}{Subsections}
\crefname{figure}{Figure}{Figures}
\Crefname{figure}{Figure}{Figures}
\newtheorem*{acknowledgement}{Acknowledgement}
\newcommand{\Z}{\mathbb{Z}}
\newcommand{\R}{\mathbb{R}}
\newcommand{\C}{\mathbb{C}}
\newcommand{\quat}{\mathbb{H}}
\newcommand{\Q}{\mathbb{Q}}
\newcommand{\spc}{\mathrm{spin}^c}
\newcommand{\conn}{{\mathscr A}}
\newcommand{\gauge}{{\mathscr G}}
\newcommand{\conf}{{\mathscr C}}
\newcommand{\quot}{{\mathscr B}}
\newcommand{\M}{{\mathcal M}}
\newcommand{\calM}{{\mathcal M}}
\newcommand{\calN}{{\mathcal N}}
\newcommand{\scrH}{{\mathscr H}}
\newcommand{\scrS}{{\mathscr S}}
\newcommand{\fraks}{\mathfrak{s}}
\newcommand{\frakt}{\mathfrak{t}}
\newcommand{\vp}{\varphi}
\newcommand{\del}{\partial}
\newcommand{\Ker}{\mathop{\mathrm{Ker}}\nolimits}
\newcommand{\Coker}{\mathop{\mathrm{Coker}}\nolimits}
\newcommand{\im}{\mathop{\mathrm{Im}}\nolimits}
\newcommand{\rank}{\mathop{\mathrm{rank}}\nolimits}
\newcommand{\Hom}{\mathop{\mathrm{Hom}}\nolimits}
\newcommand{\tilR}{\tilde{\mathbb{R}}}
\newcommand{\SW}{Seiberg--Witten }
\newcommand{\id}{\mathrm{id}}
\newcommand{\ind}{\mathop{\mathrm{ind}}\nolimits}
\title[PSC and 10/8 for end-periodic $4$-manifolds]{Positive scalar curvature and 10/8-type inequalities on $4$-manifolds with periodic ends}
\author{Hokuto Konno}
\address{2-1 Hirosawa, Wako, Saitama 351-0198, Japan}
\email{hokuto.konno@riken.jp}
\author{Masaki Taniguchi}
\address{Graduate School of Mathematical Sciences, the University of Tokyo, 3-8-1 Komaba, Meguro, Tokyo 153-8914, Japan}
\email{masakit@ms.u-tokyo.ac.jp}
\date{}
\begin{document}

\maketitle

\begin{abstract}
We show 10/8-type inequalities for some end-periodic 4-manifolds which have positive scalar curvature metrics on the ends. 
As an application, we construct a new family of closed 4-manifolds which do not admit positive scalar curvature metrics.
\end{abstract}

\tableofcontents

\section{Introduction}
In this paper, we shall give a relation between two different types of topics having independent histories via the \SW equations on some $4$-manifold with periodic ends.
The first topic is the existence of a metric with positive scalar curvature (PSC) on a given manifold.
This is a classical object of interest in Riemannian geometry.
The second is the 10/8-inequality,  regarded as one of the central topics in $4$-dimensional topology.
The relation exhibited in this paper between these two topics also yields a concrete application, that is, we construct a new family of closed 4-manifolds which do not admit positive scalar curvature metrics.

Let us start with the first topic above.
For a given manifold, the existence of a PSC metric is a fundamental problem in Riemannian geometry.
This problem was completely solved for simply connected closed $n$-manifolds with $n>4$~\cite{GL80,Sto92}.
In dimension $4$, the problem is still far from a satisfactory answer (even for simply connected manifolds), but there are two celebrated obstructions to PSC metric.
The first one is the vanishing of the signature $\sigma(X)$ of a closed oriented spin $4$-manifold $X$ under the assumption of the existence of a PSC metric.
The second is the vanishing of the Seiberg--Witten invariant of a closed oriented $4$-manifold $X$ with $b^{+}(X)>1$ under the same assumption, where $b^{+}(X)$ denotes the maximal dimension of positive definite subspaces of $H^{2}(X;\R)$ with respect to the intersection form.
Note that both of these two obstructions are valid only for $4$-manifolds having non-trivial second Betti numbers.
In contrast, we shall attack the problem $4$-manifolds having trivial second Betti numbers in this paper:
more precisely, we consider a closed oriented $4$-manifold $X$ such that $H_{\ast}(X;\Q) \cong H_{\ast}(S^1\times S^3;\Q)$. 
We call such $X$ a {\it rational homology $S^{1}\times S^{3}$}. We call a closed oriented $3$-manifold embedded into $X$ as a fixed generator of $H_3(X,\Z)$ a {\it cross-section of $X$}.
We also assume that $X$ contains an oriented rational homology $3$-sphere $Y$ as a cross-section.
For such a $4$-manifold, J.~Lin~\cite{L16} recently succeeded to construct the first effective obstruction to PSC metric using \SW theory on periodic-end $4$-manifolds.
(Although Lin originally considered an integral homology $S^{1}\times S^{3}$ in \cite{L16}, his result was generalized to any rational homology $S^{1} \times S^{3}$ in \cite{LRS17} by himself and D.~Ruberman and  N.~Saveliev.)
The remarkable obstruction due to Lin is described in terms of the Mrowka--Ruberman--Saveliev invariant $\lambda_{SW}(X,\fraks)$ defined in \cite{MRS11}, which depends on the choice of a spin structure $\fraks$ of $X$, and the Fr{\o}yshov invariant $h(Y, \frakt)$ defined in \cite{Fr10} for the restricted spin structure $\frakt = \fraks|_{Y}$ on $Y$ coming from $X$.
More precisely, Lin proved that, if $X$ admits a PSC metric, then the formula 
\begin{align}\label{obst}
\lambda_{SW}(X,\fraks)=-h(Y,\frakt)
\end{align}
holds.
By the use of this obstruction, Lin showed that any homology $S^1\times S^3$ which has $\Sigma(2,3,7)$ as a cross-section does not admit a PSC metric.

In this paper, we shall construct an obstruction which is different from Lin's one to PSC metric on homology $S^{1} \times S^{3}$.
To give the obstruction, we also consider \SW equations on periodic-end $4$-manifolds, used in Lin's argument.
However, our approach is based on a quite different point of view: the 10/8-inequality. 
Here we explain some historical background on the 10/8-inequality.
Given a non-degenerate symmetric bilinear form over $\Z$, it is quite natural to ask whether this is realized as the intersection form of a closed smooth $4$-manifold.
After the celebrated S.~Donaldson's diagonalization theorem~\cite{Do83}, the 
remaining problem centered on constraints on the intersection forms of  spin $4$-manifolds.
Y.~Matsumoto~\cite{Ma82} proposed a conjecture on such a constraint, called the 11/8-conjecture today.
If this conjecture would turn out true, the realization problem above shall be completely solved.
After the appearance of the Seiberg--Witten theory, M.~Furuta~\cite{Fu01} showed a strong constraint, now called the 10/8-inequality, on the intersection form of a smooth closed spin $4$-manifold.
For a long time, the 10/8-inequality has been the closest constraint to the 11/8-conjecture.
For this reason the 10/8-inequality is one of the central interests in $4$-dimensional topology.

Our main theorem, connecting PSC metrics with the 10/8-inequality, is described as follows:

\begin{theo}\label{main}
Let $(X,\fraks)$ be an oriented spin rational homology $S^1\times S^3$, $Y$ be an oriented rational homology $3$-sphere embedded in $X$, and $\frakt$ be the spin structure on $Y$ defined as the restriction of $\fraks$.
Suppose that $Y$ is a cross-section of $X$, i.e. $Y$ represents a fixed generator of $H_3(X;\Z)$.
Assume that $X$ admits a PSC metric.
Then, for any compact spin $4$-manifold $M$ bounded by $(Y,\frakt)$ as spin manifolds, the inequality
\[
b^+(M) \geq -\frac{\sigma (M)}{8}+h(Y,\frakt)
\]
holds.
Moreover, if $b^+(M)$ is an odd number, then we have
\[
b^+(M) \geq -\frac{\sigma (M)}{8}+h(Y,\frakt)+1,
\]
and if $b^+(M)$ is a positive even number, then we have
\[
b^+(M) \geq -\frac{\sigma (M)}{8}+h(Y,\frakt)+2.
\]
\end{theo}

\begin{rem}
Let $M'$ be a closed spin $4$-manifold and $M$ be the complement of an embedded $4$-disk in $M'$.
Since $S^1\times S^3$ has a PSC metric, we can substitute $X=S^1 \times S^3$ and $Y=S^3$ in \cref{main}.
Then the second or third inequality in \cref{main} recovers the original 10/8-inequality for $M'$ due to M.~Furuta (Theorem~1 in \cite{Fu01}).
\end{rem}

\Cref{main} is shown by considering the \SW equations on a periodic-end $4$-manifold, which is obtained by gluing $M$ with infinitely many copies of the compact $4$-manifold $W$ defined by cutting $X$ open along $Y$.
The inequalities in \cref{main} are derived as 10/8-type inequalities for this periodic-end spin $4$-manifold.
To show these 10/8-type inequalities, we use Y.~Kametani's argument~\cite{Ka18} which provides a 10/8-type inequality without using finite-dimensional approximations of the \SW equations.
On the other hand, D.~Veloso~\cite{Ve14} has considered the boundedness of finite-dimensional approximations of the \SW map on a periodic-end $4$-manifold under a similar assumption on PSC.
The authors expect that his argument may be also used to give similar 10/8-type inequalities.

\Cref{main} gives a new family of homology $S^1\times S^3$ which do not admit PSC metrics.
To describe our obstruction to PSC metric, it is convenient to use the following invariant.

\begin{defi}
\label{defi: epsilon}
For an oriented rational homology $3$-sphere $Y$ with a spin structure $\frakt$, we define a number $\epsilon (Y,\frakt) \in \Q$ by
\[
\epsilon(Y,\frakt)
:= \min \Set{ \frac{\sigma(M)}{8}+b^+(M) | 
\begin{matrix}
\text{$M$ is a compact spin $4$-manifold} \\
\text{bounded by $(Y,\frakt)$ as spin manifolds.}
\end{matrix}}.
\]
\end{defi}
A similar quantity is also used by C.~Manolescu~\cite{Ma14}.
Manolescu constructed an invariant $\kappa(Y,\frakt) \in \Q$ for a spin rational homology $3$-sphere $(Y,\frakt)$ and showed the inequality
\begin{align}\label{man}
\kappa(Y,\frakt)  \leq \epsilon(Y,\frakt)+1
\end{align}
in Theorem~1 of \cite{Ma14}. 
(For most of \cite{Ma14}, the results are stated for integral homology $3$-spheres.
See Remark~2 of \cite{Ma14} for rational homology $3$-spheres.)
Note that $\epsilon(Y,\frakt)$ is a well-defined finite number.
This is because the inequality \eqref{man} (and an analogous inequality for a rational homology $3$-sphere) provides a lower bound of $\epsilon(Y,\frakt)$, and every spin $3$-manifold bounds a compact spin $4$-manifold.
Using this invariant $\epsilon$, we define an invariant $\psi(Y,\frakt)$ of $(Y,\frakt)$ by
\begin{align}
\psi(Y,\frakt) :=- \epsilon(Y,\frakt) +h(Y,\frakt).
\label{eq: def of psi yt}
\end{align}
Using this map $\psi$, our obstruction to PSC metric is described as follows:

\begin{cor}
\label{cor psi examples}
Let $(Y,\frakt)$ be a spin rational homology $3$-sphere and suppose that $\psi(Y,\frakt)>0$.
Let $(X, \fraks)$ be a spin rational homology $S^1 \times S^3$.
If $X$ contains $ Y$ as a cross-section and $\fraks|Y= \frakt$ holds, then $X$ does not admit a PSC metric.
\end{cor}
\begin{proof}
For a given spin oriented rational homology $3$-sphere $(Y,\frakt)$,
let $M$ be a compact spin $4$-manifold with $\del M=Y$ as spin manifolds and $\epsilon(Y,\frakt)=\sigma(M)/8+b^+(M)$.
Let $(X,\fraks)$ be an oriented spin rational homology $S^1\times S^3$ which has $Y$ as a cross-section and suppose that $\fraks|_{Y}=\frakt$.
If $X$ admits a PSC metric, \cref{main} implies that
$\psi(Y,\frakt)=-b^+(M)-\sigma (M)/8 +h(Y,\frakt) \leq 0.$
This proves the \lcnamecref{cor psi examples}.
\end{proof}

Using \cref{cor psi examples}, we can construct many new examples of homology $S^1 \times S^3$'s which do not admit PSC metrics.
Such examples shall be given in \cref{examples}.

\begin{acknowledgement}
The authors would like to express their deep gratitude to Yukio Kametani for answering their many questions on his preprint~\cite{Ka18}.
The authors would also like to express their appreciation  to Mikio Furuta for informing them of Kametani's preprint and encouragements on this work.
The authors would like to express their deep gratitude to Danny Ruberman for giving comments on examples of this paper.
The authors also wish to thank Andrei Teleman for informing them of Veloso's argument~\cite{Ve14} and answering their  questions on it.
The authors would also like to express their appreciation to Jianfeng Lin and Fuquan Fang for pointing out the relation between our work and that of R.~Schoen and S.~T.~Yau~\cite{SY79}.
The authors also appreciate Ko Ohashi's, Mayuko Yamashita's, Kyungbae Park's and Kouki Sato's helpful comments on the paper~\cite{FK05}, on equivariant $KO$-theory, homology cobordisms and examples of homology $S^1\times S^3$'s respectively. 
The first author was supported by JSPS KAKENHI Grant Numbers 16J05569 and 19K23412.
The second author was supported by JSPS KAKENHI Grant Number 17J04364.
Both authors were supported by JSPS Grant-in-Aid for Scientific Research on Innovative Areas (Research in a proposed research area) No.17H06461 and the Program for Leading Graduate Schools, MEXT, Japan.
\end{acknowledgement}

\section{Preliminaries}
\label{section Preliminaries}
Let $(X,\fraks)$ be an oriented spin rational homology $S^1 \times S^3$ and $Y$ be an oriented rational homology $3$-sphere. We fix a Riemannian metric $g_X$ on $X$ and a generator of $H_3(X;\Z)$, denoted by $1\in H_3(X;\Z)$.
(Note that $H_3(X;\Z)$ is isomorphic to $H^{1}(X;\Z)$, and hence to $\Z$.)
We also assume that $Y$ is embedded into $X$ as a cross-section of $X$, namely $[Y]=1$.
Let $W_0$ be the rational homology cobordism from $Y$ to itself obtained by cutting $X$ open along $Y$.
The manifold $W_{0}$ is equipped with an orientation and a spin structure induced by that of $X$.
We define 
\[
W[m,n]:= W_m \cup_Y W_{m+1} \cup_Y \dots \cup_YW_n
\]
for $(m,n) \in (\{-\infty \}\cup \Z) \times (\Z\cup \{\infty\})$ with $m<n$.
Let us take a  compact spin $4$-manifold $M$ bounded by $Y$ as oriented manifolds.
The element $1 \in H^1(X;\Z)$ corresponding to $1 \in H_3(X;\Z)$ via Poincar\'e duality gives the isomorphism class of a $\Z$-bundle
\begin{align}\label{zcov}
p:\widetilde{X} \to X
\end{align}
and an identification 
\begin{align}\label{zcov1}
\widetilde{X} \cong W[-\infty,\infty].
\end{align}
 We can suppose that $H^1(M;\Z)=0$ by surgery preserving the intersection form of $M$ and the condition that $M$ is spin.

\begin{assum}\label{b1condition}
Henceforth we assume that $H^1(M;\Z)=0$.
 \end{assum}
 Then we get a non-compact manifold $Z:= M \cup_{Y} W[0,\infty]$ equipped with a natural spin structure induced by spin structures on $M$ and $W_0$. 
 Via the identification \eqref{zcov1}, we regard $p$ as a map from $W[-\infty ,\infty]$ to $X$. We set $p_+:W[0 ,\infty] \to X$ as the restriction of $p$.
 We call an object on $Z$ a {\it periodic object} on $Z$ if the restriction of the object to $W[0,\infty]$ can be identified with the pull-back of an object on $X$ by $p_+$.
For example, we shall use a periodic connection, a periodic metric, periodic bundles, and periodic differential operators.
By considering pull-back by $p_{+}$, the Riemannian metric $g_X$ on $X$ induces the Riemannian metric $g_{W[0,\infty]}$ on $W[0,\infty]$.
We extend the Riemannian metric $g_{W[0,\infty]}$ to a periodic Riemannian metric $g_Z$ on $Z$, and henceforth fix it.
Let $S^+, S^-$ be the positive and negative spinor bundles respectively over $Z$ determined by the metric and the spin structure.
If we fix a trivialization of the determinant line bundle of the spin structure on $Z$, we have the canonical reference connection $A_0$ on it corresponding to the trivial connection.

To consider the weighted Sobolev norms on $Z$, we fix a function  
\[
\tau : Z \to \R
\]
satisfying $T^*\tau =\tau+1$, where $T: W[0,\infty] \to W[0,\infty]$ is the deck transform determined by $T(W_i)=W_{i+1}$.

\subsection{Fredholm theory}
\label{delta0}
To obtain the Fredholm property of periodic elliptic operators on $Z$, it is reasonable to work on the $L^2_{k,\delta}$-norms rather than  the $L^2_k$-norms for fixed $k\geq3$ and a suitable weight $\delta$.
C.~Taubes \cite{T87} showed that a periodic elliptic operator on $Z$ with some condition is Fredholm with respect to $L^2_{k,\delta}$-norms for generic $\delta \in \R$.  
Let $\mathbb{D}= (D_i,E_i)$ be a periodic elliptic complex on $Z$, i.e. the complex 
 \begin{align} \label{comp}
0\to \Gamma(Z;E_N) \xrightarrow{D_N} \Gamma(Z;E_{N-1} )  \to \dots \xrightarrow{D_1} \Gamma(Z;E_0)\to 0 
\end{align}
satisfying
\begin{itemize} 
\item Each linear map $D_i$ is a first order periodic differential operator on $Z$.
\item  The symbol sequence of \eqref{comp} is exact.
\end{itemize} 
We consider the following norm 
 \[
 \| f \|_{L^2_{k,\delta}(Z)} := \| e^{\tau \delta} f \|_{L^2_{k}(Z)} 
 \]
 by using a periodic connection and a periodic metric. 
 We call the norm $\|-\|_{L^2_{k,\delta}}$ the {\it weighted Sobolev norm with weight $\delta \in \R$}.
By extending \eqref{comp} to the complex of the completions by the weighted Sobolev norms,  we obtain the complex of bounded operators 
 \begin{align}\label{elliop}
L^2_{k+N+1,\delta} (Z;E_N) \xrightarrow{D_N} L^2_{k+N,\delta}  (Z;E_{N-1} )  \to \dots \xrightarrow{D_1} L^2_{k,\delta} (Z;E_0) 
\end{align}
for each $\delta \in \R$.
Taubes constructed a sufficient condition for the Fredholm property of \eqref{elliop} by using the Fourier--Laplace (FL) transformation. 
The FL transformation replaces the Fredholm property of the periodic operator on $Z$ with the invertibility of a family of operators on $X$ parameterized by $S^1$.
Let us describe it below.
We first note that, since the operators in \eqref{elliop} are periodic differential operators, there are differential operators $\hat{\mathbb{D}}=(\hat{D}_i, \hat{E}_i)$ on $X$ such that there is an identification between $p_+^*\hat{\mathbb{D}}$ and $\mathbb{D}$ on $W[0,\infty]$.
The sufficient condition for Fredholmness is given by invertibility of the following complexes on $X$.
For $z \in \C$, we define the complex $\hat{\mathbb{D}}(z)$ by 
\begin{align} \label{comp2}
0 \to \Gamma(X;\hat{E}_N) \xrightarrow{\hat{D}_N(z)} \Gamma (X;\hat{E}_{N-1} )  \to \cdots \xrightarrow{\hat{D}_1(z)} \Gamma (X;\hat{E}_0) \to 0, 
\end{align} 
where the operator $\hat{D}_i(z): \Gamma(X;\hat{E}_i) \to \Gamma (X;\hat{E}_{i-1} )$ is give by
\[
\hat{D}_i(z)(f):= e^{-\tau z} \hat{D}_i (e^{\tau z}f).
\]

 \begin{theo}[Taubes, Lemma 4.3 and Lemma 4.5 in \cite{T87}]\label{fred}
 Suppose that there exists $z_0 \in \C$ such that the complex $\hat{\mathbb{D}}(z_0)$ is acyclic.
Then there exists a discrete subset $\mathcal{D}$ in $\R$ with no accumulation points such that $\eqref{elliop}$ is Fredholm for each $\delta$ in $\R \setminus \mathcal{D}$.
Moreover, the set $\mathcal{D}$ is given by 
\[
\mathcal{D} =\Set{ \delta \in \R |\hat{\mathbb{D}}(z) \text{ is not invertible for some $z$ with }  \text{Re } z=\delta .}.
\]
 \end{theo}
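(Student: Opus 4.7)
My plan is to reduce the Fredholm question for the periodic complex on $Z$ to a spectral question for the parametrized family $\hat{\mathbb{D}}(z)$ on the closed manifold $X$, using the Fourier--Laplace (FL) transform along the $\Z$-cover $\widetilde{X} \to X$. The starting identity is that conjugation by $e^{\tau z}$ intertwines a periodic operator with a family on $X$: if $g \in \Gamma(X;\hat{E}_i)$ and $f = e^{\tau z}\, p_{+}^{*} g$ on $W[0,\infty]$, then $D_i f = e^{\tau z}\, p_{+}^{*}(\hat{D}_i(z) g)$. Correspondingly, the weighted space $L^2_{k,\delta}(W[0,\infty]; E_i)$ is isomorphic via FL to an $L^2$ space of $L^2_{k}(X;\hat{E}_i)$-valued functions on the vertical line $\{\text{Re } z = \delta\}$, and on the periodic end $D_i$ becomes pointwise multiplication by $\hat{D}_i(z)$.

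Next I would invoke analytic Fredholm theory for the family $\hat{\mathbb{D}}(z)$. Each $\hat{D}_i(z)$ is a first-order elliptic operator on the closed manifold $X$ depending polynomially (of degree one) on $z$, so $\hat{\mathbb{D}}(z)$ is a holomorphic family of Fredholm complexes. The assumption that $\hat{\mathbb{D}}(z_0)$ is acyclic means it is invertible as a Fredholm complex, and the holomorphic Fredholm theorem gives that the set
\[
\Sigma := \{ z \in \C \mid \hat{\mathbb{D}}(z) \text{ is not invertible as a complex} \}
\]
is a closed, locally finite subset of $\C$.

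To obtain the stronger discreteness needed for $\mathcal{D}$ in the statement, I would prove that $\Sigma$ contains only finitely many points in every vertical strip $\{a \leq \text{Re } z \leq b\}$. The mechanism is that as $|\text{Im } z| \to \infty$ with $\text{Re } z$ bounded, $\hat{D}_i(z)$ is dominated by its ``Fourier'' term along the covering direction, yielding uniform invertibility for $|\text{Im } z|$ large. Combined with local finiteness, this rules out accumulation in vertical strips, so $\mathcal{D} = \{\text{Re } z : z \in \Sigma\}$ has no accumulation points in $\R$. For any $\delta \notin \mathcal{D}$ the inverses $\hat{\mathbb{D}}(z)^{-1}$ are uniformly bounded along the line $\text{Re } z = \delta$, and I would assemble a parametrix for \eqref{elliop} by taking the inverse FL transform of $\hat{D}_i(z)^{-1}$ on the periodic end and gluing, via a partition of unity, to a standard interior elliptic parametrix over a neighborhood of the compact part of $Z$. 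The result is a parametrix modulo compact operators in the weighted norms, proving Fredholmness; the description of $\mathcal{D}$ is then immediate from the construction.

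The main obstacle is precisely the uniform invertibility of $\hat{\mathbb{D}}(z)$ along each admissible vertical line, equivalently the finiteness of $\Sigma$ in vertical strips. Proving this requires delicate elliptic estimates for the parametrized family that are uniform in the spectral parameter, and is the technical core of Taubes' argument in \cite{T87}.
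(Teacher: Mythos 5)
The paper does not prove this statement; it is quoted verbatim from Taubes \cite{T87} (Lemmas 4.3 and 4.5), so your proposal can only be measured against Taubes' argument. Your overall architecture is the right one and is essentially his: conjugation by $e^{\tau z}$ intertwines the periodic operator with the holomorphic family $\hat{\mathbb{D}}(z)$ on $X$, the analytic Fredholm alternative (applicable since $\hat{\mathbb{D}}(z_0)$ is acyclic) makes the non-invertibility locus $\Sigma$ closed and locally finite, and for $\delta\notin\mathcal{D}$ one builds a parametrix by inverse Fourier--Laplace transform on the end glued to an interior elliptic parametrix over the compact part.

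There is, however, one genuinely wrong step: your mechanism for excluding accumulation points of $\mathcal{D}$. You claim that for $\operatorname{Re}z$ in a bounded interval, $\hat{\mathbb{D}}(z)$ becomes uniformly invertible as $|\operatorname{Im}z|\to\infty$ because the operator is ``dominated by its Fourier term.'' This is false in the periodic setting. Writing $z=\delta+it$ and noting $T^{*}\tau=\tau+1$, the function $e^{2\pi i\tau}$ is deck-invariant and descends to a unitary multiplier $u$ on $X$, so $\hat{\mathbb{D}}(z+2\pi i)=u^{-1}\hat{\mathbb{D}}(z)\,u$. Hence $\Sigma$ is invariant under $z\mapsto z+2\pi i$: if any point of the line $\operatorname{Re}z=\delta$ lies in $\Sigma$, then so does an infinite sequence with $|\operatorname{Im}z|\to\infty$, directly contradicting your claimed large-$|\operatorname{Im}z|$ invertibility. (The heuristic you are importing is the one valid for product cylindrical ends, where the family is $D_{Y}+it$ with $D_{Y}$ self-adjoint; it does not transfer to general periodic ends.) The correct argument is to \emph{use} this $2\pi i$-periodicity: $\Sigma$ descends to a closed, locally finite subset of the cylinder $\C/2\pi i\Z$, whose intersection with the compact set $\{a\le\operatorname{Re}z\le b\}/2\pi i\Z$ is finite; therefore $\mathcal{D}\cap[a,b]=\operatorname{Re}\bigl(\Sigma\cap\{a\le\operatorname{Re}z\le b\}\bigr)$ is finite, and for $\delta\notin\mathcal{D}$ the uniform bound on $\hat{\mathbb{D}}(z)^{-1}$ along the line $\operatorname{Re}z=\delta$ follows from continuity on the compact quotient circle. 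With that substitution the rest of your parametrix construction goes through as in \cite{T87}.
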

 \begin{rem} 
The assumption of \cref{fred} implies that the Euler characteristic of \eqref{comp2} is $0$ for all $z$.
We shall consider $\hat{\mathbb{D}}$ as the Atiyah--Hitchin--Singer complex, the spin (or spin$^c$) Dirac operator $D^+_A : \Gamma (X;S^+) \to  \Gamma  (X;S^-)$ or the de Rham complex. Note that the Euler characteristic (i.e. the index) of these operators are $0$ in our situation. 
\end{rem}
\begin{rem} The Fredholm property does not depend on the choice of $k$.
This is because the acyclic property of \eqref{comp2} does not depend on the choice of $k$ by the elliptic regularity theorem.
\end{rem}
If we consider the set $\mathcal{D}$ for the Atiyah--Hitchin--Singer complex on $Z$, one can show that the set $\mathcal{D}$ does not depend on the choice of Riemannian metric on $X$. However if we consider the spin (or spin$^c$) Dirac operator on $Z$, the set $\mathcal{D}$ depends on the choice of Riemannian metric.
Let us consider the following operator on $X$: 
\begin{align}\label{Dirac}
D^+_{A_0} + f^* d\theta : L^2_k (X;S^+) \to   L^2_{k-1} (X;S^-), 
\end{align}
where the map $f:X\to S^1$ is a smooth classifying map of \eqref{zcov}. 
 We call $g_X$ an {\it admissible metric} on $X$ if the kernel of \eqref{Dirac} is $0$.
 This condition is considered in \cite{RS07}.
The admissibility condition does not depend on the choice of classifying map $f$.

\begin{rem}
\label{rem: psc is admissible}
We can show that every PSC metric on $X$ is an admissible metric. This is a consequence of Weitzenb\"ock formula. 
(See (2) in \cite{RS07}.)
\end{rem}
Now we see that the assumption of \cref{fred} is satisfied for the operators in our situation.

\begin{lem}\label{allops}
The assumption of \cref{fred} is satisfied for the following operators: 
\begin{itemize}
\item The Dirac operator $D^+_{A_0}: L^2_{k,\delta} (Z;S^+) \to L^2_{k-1,\delta} (Z;S^-)$ for the pull-back of an admissible metric $g_X$ on $X$.
\item The Atiyah--Hitchin--Singer complex
\[
0 \to L^2_{k+1,\delta} (i\Lambda^0(Z)) \xrightarrow{d}  L^2_{k,\delta} (i\Lambda^1(Z)) \xrightarrow{d^+} L^2_{k-1,\delta} (i\Lambda^+(Z)) \to 0.
\]
\item The de Rham complex 
\begin{align} \label{derham}
0 \to L^2_{k+1,\delta} (i\Lambda^0(Z)) \xrightarrow{d}  L^2_{k,\delta} (i\Lambda^1(Z)) \xrightarrow{d} \dots \xrightarrow{d}  L^2_{k-3,\delta} (i\Lambda^4(Z)) \to 0.
\end{align}
\end{itemize}
\end{lem}

\begin{proof} 
The Fredholm property does not depend on the choice of $\tau$ satisfying $T^* \tau =\tau+1$ on $W[0,\infty]$.
Therefore we can choose a lift of $f$ as $\tau$. Then the operator $\hat{\mathbb{D}}(z_0)|_{z_{0}=1}$ corresponding to $D^+_{A_0}$ coincides with that corresponding to $D^+_{A_0} + f^* d\theta$.
Since the index of $D^+_{A_0} + f^* d\theta$ is $0$, admissibility implies that $\hat{\mathbb{D}}(z_0)|_{z_{0}=1}$ is acyclic.
The second condition follow from Lemma 3.2 in \cite{T87}. If $z=1$ and $D_i= d : \Omega^i(X) \to \Omega^{i+1}(X)$ , the operator $\hat{D}_N(z)$ can be described as follows: 
\[
\hat{D}_N(z) (f) = e^{- \tau}d( e^{\tau} f)= d f + d\tau \wedge f .
\]
 By the argument due to Taubes (Theorem 3.1 in \cite{T87}), the complex $( \Omega^i(X), f \mapsto  d f + d\tau \wedge f )$ is acyclic 
if and only if the following linear map gives a injective map: 
\[
\sigma_{i+1}: H_{\text{dR}}^i(X)/ \im \sigma_i \to H^{i+1}_{\text{dR}}(X), 
\]
where $H_{\text{dR}}^i(X)$ is the $i$-th de Rahm cohomology and $\sigma_{i+1} ([f]) := [d\tau \wedge f]$. 
The map $\sigma_0$,  $\sigma_1$, $\sigma_2$ and $\sigma_3$ are automatically injective since $d\tau$ ganerates $H^{1}_{\text{dR}}(X) \cong  \R$ and $H^{2}_{\text{dR}}(X) \cong  \{0\}$. By Poincar\'e duality, 
\[
\sigma_4 : H_{\text{dR}}^3(X) \to H^{4}_{\text{dR}}(X) 
\]
gives an isomorphism.  Therefore, $\sigma_4$ and $\sigma_5$ are also injective. This gives the conclusion.
\end{proof}

\begin{rem}
\label{rem deltazero}
Since $\mathcal{D}$ has no accumulation points, we can choose a sufficiently small $\delta_0>0$ satisfying that for any $\delta \in (0,\delta_0)$ the operators in \cref{allops} are Fredholm. We fix the notation $\delta_0$ in the rest of this paper.
\end{rem}

\subsection{Mrowka--Ruberman--Saveliev invariant and Lin's formula}
Let $(X,\fraks)$ be a spin rational homology $ S^1 \times S^3$.  
For such a $4$-manifold $X$, Mrowka--Ruberman--Saveliev~\cite{MRS11} constructed a gauge theoretic invariant $\lambda_{SW}$.
In this section, we review the definition of $\lambda_{SW}$ and the following result due to J.~Lin~\cite{L16}: the invariant $-\lambda_{SW}$ coincides with the Fr\o yshov invariant of its cross-section under the assumption that $X$ admit a PSC metric.

For a fixed spin structure, the formal dimension of the perturbed blown-up SW moduli space $\calM(X, g_X, \beta)$ of $X$ is 0.
Here $\beta$ denotes some perturbation.
Therefore the formal dimension of the boundary of  $\calM(X, g_X, \beta)$ is $ -1$.
Mrowka--Ruberman--Saveliev showed that the space  $\calM(X, g_X, \beta)$ has a structure of compact $0$-dimensional manifold for a fixed generic pair of a metric and a perturbation $(g_X, \beta)$. For a generic pair $(g_X, \beta)$, one can define the Fredholm index of the operator 
\[
D^+(Z, g_X, \beta) : L^2_{k}(Z;S^+) \to L^2_{k-1}(Z;S^-) .
\]
Note that, although $D^+(Z, g_X, \beta)$ is an operator over a manifold with periodic ends, we do not use weighted Sobolev norms to obtain the Fredholm property of $D^+(Z, g_X, \beta)$.
Instead, choosing a suitable pair $(g_X, \beta)$, one may ensure the Fredholm property under usual Sobolev norms.
Mrowka--Ruberman--Saveliev defined
\[
\lambda_{SW}(X,\fraks) := \# \calM(X, g_X, \beta) - \ind_{\C} D^+(Z, g_X, \beta) - \frac{\sigma(M)} {8}.
\]
in \cite{MRS11}.
Here $\#$ denotes the signed count of points in the moduli space.

Mrowka--Ruberman--Saveliev showed that $\lambda_{SW}(X,\fraks)$ does not depend on the choice of metric, perturbation, and $M$.
We also use the following theorem due to Lin~\cite{L16} and Lin--Ruberman--Saveliev~\cite{LRS17}. 
\begin{theo}[Lin, Theorem 1.2 in \cite{L16}, Lin--Ruberman--Saveliev, Theorem B in \cite{LRS17}]
\label{Lin}
Let $(X,\fraks)$ be an oriented spin rational homology $S^1\times S^3$ and $Y$ be an oriented rational homology $3$-sphere.
We fix a generator $H_3(X; \Z)$ and suppose  that $Y$ is embedded into $X$ as a submanifold such that $Y$ represents the fixed generator of $H_3(X; \mathbb{Z})$.
If $X$ has a PSC metric,  then the equality
\[
\lambda_{SW}(X,\fraks)=-h(Y,\frakt)
\]
holds.
\end{theo}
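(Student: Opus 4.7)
The plan is to use the PSC hypothesis to empty the Seiberg--Witten moduli space and then identify the remaining index contribution in $\lambda_{SW}(X)$ with $-h(Y)$ via an APS-type calculation. First I would show that $\calM(X, g_X, \beta) = \emptyset$ for generic small perturbation $\beta$. The Weitzenb\"ock identity combined with the curvature equation gives, for any solution $(A, \Phi)$ of the unperturbed equations on the closed manifold $X$,
\[
0 = \int_X \Bigl( |\nabla_A \Phi|^2 + \tfrac{R}{4}|\Phi|^2 + \tfrac{1}{4}|\Phi|^4 \Bigr),
\]
and under PSC this forces $\Phi \equiv 0$. The argument persists for $\beta$ in a small generic neighbourhood of zero, so $\lambda_{SW}(X) = -\ind_{\C} D^+(Z, g_X, \beta) - \sigma(M)/8$ and the problem reduces to computing the index.

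Next I would compute $\ind_{\C} D^+(Z, g_X, \beta)$ under PSC. Since $g_X$ pulls back to a PSC metric on the periodic end $W[0, \infty]$, the cross-sectional Dirac operator $D_Y$ has a spectral gap at zero, and in particular $\ker D_Y = 0$. Using this spectral gap I would deform the periodic end through a one-parameter family of Fredholm operators to the model with a cylindrical end $Y \times [0, \infty)$; uniform invertibility of $D_Y$ is preserved under the deformation, keeping the index constant. The APS index theorem on the cylindrical-end model then gives
\[
\ind_{\C} D^+(Z, g_X, \beta) = -\tfrac{\sigma(M)}{8} - \tfrac{1}{2}\eta(D_Y).
\]

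Finally I would identify $\tfrac{1}{2}\eta(D_Y)$ with $h(Y)$ under PSC. When $Y$ carries a PSC metric, monopole Floer homology is supported on the reducible locus, whose $U$-graded tower has lowest degree determined entirely by the Dirac spectrum of $Y$; computing this bottom degree and matching conventions yields $h(Y) = \tfrac{1}{2}\eta(D_Y)$, which assembled with the previous steps gives $\lambda_{SW}(X) = -h(Y)$. The hard part will be this last identification, since tracking the sign conventions between the Floer theoretic definition of $h(Y)$ in \cite{Fr10}, the APS $\eta$-correction, and the definition of $\lambda_{SW}$ in \cite{MRS11} is delicate. Lin \cite{L16} sidesteps part of this by comparing two Seiberg--Witten moduli counts obtained by stretching along $Y$, and Lin--Ruberman--Saveliev \cite{LRS17} refine the argument using equivariant Floer theory to remove auxiliary hypotheses on $Y$.
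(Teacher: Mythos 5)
Note first that the paper gives no proof of \cref{Lin}; it is imported wholesale from \cite{L16} and \cite{LRS17}, so the only internal content overlapping your sketch is \cref{cor of W formula}. Your first step (PSC on $X$ forces $\Phi\equiv 0$ by the Weitzenb\"ock formula, hence $\calM(X,g_X,\beta)=\emptyset$ for small generic $\beta$ and $\lambda_{SW}(X)=-\ind_{\C}D^+(Z,g_X,\beta)-\sigma(M)/8$) is correct and is exactly that lemma.

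The remaining two steps have genuine gaps. (a) A PSC metric on $X$ neither restricts to, nor implies the existence of, a PSC metric on the cross-section $Y$: by the Gauss equation the induced scalar curvature of a hypersurface involves the second fundamental form, and $Y$ is an arbitrary rational homology sphere. Hence neither the asserted spectral gap for $D_Y$ nor the hypothesis ``when $Y$ carries a PSC metric'' in your final step is available. What PSC on $X$ actually yields is admissibility, i.e.\ Fredholmness of the end-periodic Dirac operator for every weight $\delta$, as used in \cref{formal dimension}. (b) The end of $Z$ is $W[0,\infty]$, an infinite stack of copies of the homology cobordism $W_0$, which is in general not a product $Y\times[0,1]$; replacing it by a cylindrical end changes the underlying manifold, so your ``one-parameter family of Fredholm operators'' does not live on a fixed space, and the index comparison requires the end-periodic index theorem of Mrowka--Ruberman--Saveliev or an excision/neck-stretching argument rather than an operator homotopy. (c) Even when $Y$ does admit PSC, $h(Y)$ is not $\tfrac12\eta(D_Y)$: the relevant correction term mixes the Dirac and odd-signature eta invariants. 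The actual proofs are organized differently: \cite{LRS17} prove a splitting formula expressing $\lambda_{SW}(X)$ as $-h(Y)$ plus a Lefschetz-number contribution of the map induced by $W_0$ on reduced monopole Floer homology, and the PSC hypothesis is used to kill that extra term, while \cite{L16} handles the PSC case by comparing moduli counts under stretching along $Y$. As written, your sketch would at best prove the special case $W_0=Y\times[0,1]$ with a product PSC metric.
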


Recall that the Weitzenb\"ock formula implies that the SW moduli space is empty for a PSC metric.
(See \cite{M96}, for example.)
The following lemma is immediately deduced from this fact and the definition of $\lambda_{SW}(X,\fraks)$.

\begin{lem} \label{cor of W formula}
Let $(X,\fraks)$ be an oriented spin rational homology $S^1\times S^3$ and Y be an oriented rational homology $3$-sphere as in \cref{Lin}.
If $X$ admits a PSC metric $g_{X}$, the following equality holds: 
\[
\lambda_{SW}(X,\fraks) = -\ind_{\C} D^+(Z, g_X, \beta) - \frac{\sigma(M)} {8},
\]
where $M$ is a compact spin 4-manifold with $\partial M=Y$.

\end{lem}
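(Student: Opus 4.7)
The strategy is to exploit the independence of $\lambda_{SW}(X)$ from the auxiliary data $(g_X,\beta,M)$ established in \cite{MRS11}, compute the invariant using the given PSC metric, and reduce the statement to showing that the moduli space count vanishes. Substituting directly into the definition
\[
\lambda_{SW}(X) = \#\calM(X,g_X,\beta) - \ind_\C D^+(Z,g_X,\beta) - \frac{\sigma(M)}{8},
\]
the claim is equivalent to $\#\calM(X,g_X,\beta)=0$ when $g_X$ is chosen to be a PSC metric and $\beta$ is a sufficiently small generic perturbation.

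To establish the vanishing, I would let $(A,\phi)$ be a solution in the blown-up Seiberg--Witten moduli space on the closed manifold $X$ with the PSC metric $g_X$, so that $\phi$ has unit $L^2$-norm. Applying the Weitzenb\"ock formula
\[
D_A^\ast D_A\phi = \nabla_A^\ast \nabla_A \phi + \frac{s_{g_X}}{4}\phi + \frac{1}{2} F_A^+\cdot \phi,
\]
pairing with $\phi$, integrating over $X$, and using the curvature equation $F_A^+ = \sigma(\phi) + (\text{perturbation})$, one obtains an inequality of the shape
\[
0 \geq \int_X \left( |\nabla_A\phi|^2 + \frac{s_{g_X}}{4}|\phi|^2 + \frac{1}{4}|\phi|^4 \right) - C\,\|\beta\|\cdot \|\phi\|_{L^2}^2.
\]
Since $s_{g_X}$ is strictly positive with a positive lower bound on the closed manifold $X$, choosing the perturbation $\beta$ small enough forces $\phi\equiv 0$, contradicting the blow-up normalization $\|\phi\|_{L^2}=1$.

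It remains to rule out \emph{reducible} loci in the blow-up; these correspond to pairs with scale parameter zero, namely connections $A$ with $F_A^+ = \beta$ together with a unit Dirac harmonic spinor. The standard Lichnerowicz vanishing on the closed PSC manifold $X$ implies that the Dirac operator on $X$ has trivial kernel, so no such harmonic spinors exist. Hence neither irreducible nor reducible solutions appear, giving $\calM(X,g_X,\beta)=\emptyset$, and substitution into the definition of $\lambda_{SW}(X)$ yields the claimed identity. The main subtlety---and the only step requiring genuine care---is ensuring that the Weitzenb\"ock argument survives passage to the blown-up, perturbed setup of \cite{MRS11}, and that the perturbation $\beta$ can simultaneously be chosen small enough for the Weitzenb\"ock bound and generic enough for $\calM(X,g_X,\beta)$ to be a smooth compact $0$-manifold (empty, in this case).
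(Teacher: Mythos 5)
Your proposal is correct and follows exactly the route the paper intends: the paper offers no detailed proof, merely remarking that the lemma is "an easy consequence of the Weitzenb\"ock formula" given the definition of $\lambda_{SW}$, which amounts precisely to your claim that $\#\calM(X,g_X,\beta)=0$ for a PSC metric and a sufficiently small generic perturbation. Your fleshed-out Weitzenb\"ock/Lichnerowicz vanishing argument, including the caveat about choosing $\beta$ both small and generic, is the standard justification for that remark.
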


\subsection{Kametani's theorem}
The original proof of the 10/8-inequality due to Furuta \cite{Fu01} for closed oriented spin 4-manifolds uses the properness property of the monopole map and a finite-dimensional approximation to that map.
After the work of Furuta, Bauer--Furuta \cite{BF04} constructed a cohomotopy version of the \SW invariant for closed oriented 4-manifolds by using the boundedness property of the monopole map and finite-dimensional approximation.
On the other hand, in \cite{Ka18},  Kametani developed a technique to obtain the 10/8-type inequality using only the compactness of the \SW moduli space.
In this section, we adapt Kametani's technique to obtain a 10/8-type inequality in our situation. First, we recall several definitions to formulate the theorem due to Kametani.

Let $G$ be a compact Lie group. 
\begin{defi}
Let $U$ be an oriented finite-dimensional vector space over $\R$ with an inner product.
A {\it real spin $G$-module} is a pair consisting of a representation $\rho : G \to SO(U)$ and a choice of lift $\tilde{\rho} : G \to Spin(U)$.
When a representation $\rho : G \to SO(U)$ is given, we call a lift $\tilde{\rho} : G \to Spin(U)$ a {\it lift to a real spin $G$-module} of $\rho$.
\end{defi}
\begin{rem}\label{real spin module}
Let $X$ be a $G$-space and $U$ be a real spin $G$-module. Suppose that the $G$-action on $X$ is free so that $(X\times U)/G \to X/G$ becomes a vector bundle. By the use of the structure of real spin $G$-module, one can show that $P_{X/G}:=(X\times Spin(n))/G \to X/G$ becomes a principal $Spin(n)$-bundle on $X/G$. The identification $P_{X/G} \times_{\pi} \R^n  \cong (X\times U)/G$ induces a spin structure on $(X\times U)/G \to X/G$, where $\pi$ is the double cover $Spin(n)\to SO(n)$.
\end{rem}

We consider the Lie group $Pin(2)$ which is the subgroup of $Sp(1) (\subset \quat)$ generated by $S^{1} (\subset \C \subset \quat)$ and $j \in \quat$. Let $\tilR$ be the non-trivial representation of $Pin(2)$ defined via the non-trivial homomorphism $Pin(2) \to \Z/2$ and the non-trivial real representation of $\Z/2$ on $\R$. 
We regard $\quat$ as the standard representation of $Pin(2)$ on the set of quaternions.
The real representation ring of $Pin(2)$ is given as follows:

\begin{lem}[See \cite{Lin15}, for example]\label{real representation of Pin(2)} The real representation ring $RO(Pin(2))$ of $Pin(2)$ can be described as follows: 
\begin{align} \label{relation}
RO( Pin(2) ) \cong \Z [ \tilR, \quat , \C]/ ( \tilR^2-1, \tilR \otimes \C - \C, \tilR \otimes \quat - \quat , \quat^2- 4 ( 1+ \tilR + \C )) , 
\end{align}
where $1$ corresponds to the one dimensional trivial representation, $S^1 \subset Pin(2)$ acts on $\C$ by $z \mapsto z^2$ and $j\in Pin(2)$ acts on $\C$ as the reflection along the diagonal. 
\end{lem}

\begin{lem} \label{spin G module H}
The $Pin(2)$-module $\quat$ has a lift to a real spin $Pin(2)$-module.
\end{lem}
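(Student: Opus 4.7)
The plan is to exhibit an explicit lift of the standard representation $\rho \colon Pin(2) \to SO(\quat)$ to $Spin(\quat)$ using the classical description of $Spin(4)$ via quaternions.

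First I would recall the identification $Spin(4) \cong Sp(1)\times Sp(1)$, where the double cover $Spin(4) \to SO(4)$ is given by
\[
(q_1,q_2) \cdot x := q_1 \, x \, q_2^{-1} \qquad (x \in \quat).
\]
Next I would unpack the convention that $\quat$ is the \emph{standard} $Pin(2)$-module: since $Pin(2)$ is defined as a subgroup of $Sp(1)\subset \quat$, the representation $\rho \colon Pin(2) \to SO(\quat)$ is simply left multiplication $g \mapsto (x \mapsto gx)$. Under the double cover above, this is precisely the image of $(g,1) \in Sp(1)\times Sp(1)$.

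This naturally suggests defining the lift
\[
\tilde{\rho} \colon Pin(2) \longrightarrow Spin(\quat), \qquad \tilde{\rho}(g) := (g,1) \in Sp(1)\times Sp(1) \cong Spin(4).
\]
I would then verify the two required properties: (i) $\tilde\rho$ is a group homomorphism, which is immediate since it is the composition of the inclusion $Pin(2)\hookrightarrow Sp(1)$ with the group homomorphism $Sp(1)\to Sp(1)\times Sp(1)$, $g\mapsto (g,1)$; and (ii) the composition of $\tilde\rho$ with the double cover $Spin(4)\to SO(4)$ recovers $\rho$, which follows from $(g,1)\cdot x = g\,x\,1^{-1} = gx$.

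The only real subtlety, and the one point I would be careful about, is fixing the convention for the standard action (left vs.\ right multiplication) and matching it to the chosen identification $Spin(4) \cong Sp(1)\times Sp(1)$; with consistent choices the construction is essentially tautological, so I do not anticipate a serious obstacle. Once the lift is in place, the pair $(\rho, \tilde\rho)$ provides the required real spin $Pin(2)$-module structure on $\quat$.
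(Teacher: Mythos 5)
Your proof is correct and follows essentially the same route as the paper: both exhibit the lift by identifying $Spin(4)\cong Sp(1)\times Sp(1)$ with covering map given by quaternion multiplication and embedding $Pin(2)$ into one $Sp(1)$ factor (the paper uses the convention $(\alpha,\beta)\cdot v=\beta v\overline{\alpha}$ and the embedding $g\mapsto(1,g)$, which is the mirror image of your $(q_1,q_2)\cdot x=q_1xq_2^{-1}$ with $g\mapsto(g,1)$; both recover left multiplication). No gap here.
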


\begin{proof}
Since the group $Spin(\quat ) \cong Sp(1) \times Sp(1)$ acts on $\quat$ by $v \mapsto \beta v \overline{\alpha}$ where $(\alpha, \beta) \in Sp(1) \times Sp(1) $,  the following diagram commutes:
\begin{align}
  \begin{CD}
    Pin(2) @>{}>> Spin(\quat ) \\
  @V{}VV    @VVV \\
     SO(\quat )  @ = SO(\quat ).
  \end{CD}
\end{align}
Here the upper horizontal map $Pin(2) \subset Sp(1)  \to   Sp(1) \times Sp(1) \cong Spin(\quat )$ is defined by $g\mapsto (1,g)$, and the left vertical map corresponds to the representation $\quat$ of $Pin(2)$.
This implies the conclusion.
\end{proof}

\begin{rem}
In this paper, for a fixed positive integer $m$, we equip $\quat^m$  with a structure of a real spin $Pin(2)$-module as the direct sum of the real spin $Pin(2)$-module defined in \cref{spin G module H}.
\end{rem}

Let $\Gamma$ be the pull-back of $Pin(1)$ along the map $Pin(2) \to O(1)$ (see Theorem 3.11 of \cite{ABS64}): 
\begin{align}\label{definition of Gamma}
  \begin{CD}
    \Gamma @>{}>> Pin(1) \\
  @V{}VV    @VVV \\
     Pin(2)   @>>>   O(1),
  \end{CD}
\end{align}
where  the map $Pin(2) \to  O(1)$ is the non-trivial homomorphism. 
The $\Gamma$-actions on $\quat$ and $\R$ are induced by $Pin(2)$-representations $\quat$ and $\tilR$ via \eqref{definition of Gamma}.
We denote these representations of $\Gamma$ by the same notations.
\begin{lem}\label{spin G module tilR}
For a positive number $n$ with $n \equiv 0 \mod 2$, $\tilR^{n}$ has a lift to a real spin $\Gamma$-module.
\end{lem}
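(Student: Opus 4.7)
My plan is to construct the spin lift by exploiting the second leg $\Gamma \to Pin(1)$ of the pullback square \eqref{definition of Gamma}, since the $\Gamma$-action on $\tilR^n$ actually factors through $Pin(1)$. First I would verify: by commutativity of \eqref{definition of Gamma}, the two maps $\Gamma \to O(1)$ agree, so the $\Gamma$-representation $\rho$ on $\tilR^n$ (defined via $Pin(2)$) equals the composition $\Gamma \to Pin(1) \to O(1) \hookrightarrow O(\tilR^n)$, where the last map is the diagonal sign action $\pm 1 \mapsto \pm I$. Since $n$ is even, $-I$ has determinant $1$, so this composition lands in $SO(\tilR^n)$. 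Thus it suffices to produce a homomorphism $\phi : Pin(1) \to Spin(\tilR^n)$ covering $Pin(1) \to O(1) \to SO(\tilR^n)$ and then take $\tilde\rho := \phi \circ (\Gamma \to Pin(1))$.

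The natural candidate for $\phi$ lives in the Clifford algebra. Let $e_1,\dots,e_n$ be the standard Clifford generators of $\R^n \subset Cl(n)$, and set
\[
\omega_n := e_1 e_2 \cdots e_n \in Cl(n).
\]
Being a product of unit vectors, $\omega_n$ lies in $Pin(n)$; because $n$ is even it belongs to the even part $Cl^0(n)$, hence $\omega_n \in Spin(n)$. Its image under the double cover $Spin(n) \to SO(n)$ is the composition of the $n$ reflections through the hyperplanes orthogonal to the $e_i$, which is exactly $-I$. Consequently $\omega_n^2$ lies in the kernel $\{\pm 1\}$ of $Spin(n) \to SO(n)$, so $\omega_n^4 = 1$. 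Using the presentation $Pin(1) \cong \Z/4 = \langle e_1 \mid e_1^4 = 1 \rangle$ with $e_1^2 = -1$, the assignment $\phi(e_1) := \omega_n$ therefore extends uniquely to a well-defined group homomorphism $\phi : Pin(1) \to Spin(\tilR^n)$.

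Setting $\tilde\rho := \phi \circ (\Gamma \to Pin(1))$, the composition with $Spin(\tilR^n) \to SO(\tilR^n)$ recovers $\rho$ precisely because $\omega_n$ covers $-I$, so $(\rho,\tilde\rho)$ is the desired real spin $\Gamma$-module structure on $\tilR^n$. The main conceptual point, mirroring the use of the identification $Spin(\quat) \cong Sp(1)\times Sp(1)$ in \cref{spin G module H}, is to identify the correct Clifford-algebraic lift of the generator of $Pin(1)$; the one potential obstacle is that the construction breaks for $n$ odd, where $\omega_n \in Pin(n) \setminus Spin(n)$, and this is exactly where the parity hypothesis $n \equiv 0 \pmod 2$ enters. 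No further case distinction between $n \equiv 0$ and $n \equiv 2 \pmod 4$ is needed, even though $\omega_n^2$ equals $+1$ or $-1$ accordingly, since either value produces a valid homomorphism.
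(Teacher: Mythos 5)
Your proof is correct, and it takes a genuinely different (and more uniform) route than the paper's. The paper splits into two cases according to $n \bmod 4$: for $n=4k$ it lifts the action through $\Z/2 \hookrightarrow Sp(1)\times Sp(1) \cong Spin(4)$ followed by the inclusion $Spin(4)\times Spin(k)\to Spin(n)$, which actually produces a lift defined on all of $Pin(2)$; for $n=4k+2$ it routes through $\Gamma \to Pin(1)\to Spin(2)$ and the inclusion $Spin(2)\times Spin(2k+1)\to Spin(n)$, which is where the $\Z/4$ extension $\Gamma$ is genuinely needed. You instead observe once and for all that the representation factors through $Pin(1)\cong\Z/4$ and send the generator to the Clifford volume element $\omega_n=e_1\cdots e_n\in Spin(n)$, which covers $-I$ and satisfies $\omega_n^4=1$; this handles both residues simultaneously and makes transparent exactly why evenness of $n$ is needed and why no further case distinction is required (the sign of $\omega_n^2$ is irrelevant for a homomorphism out of $\Z/4$). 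The one convention you are implicitly relying on is $e_i^2=-1$ in the Clifford algebra, so that $Pin(1)\cong\Z/4$ rather than $\Z/2\times\Z/2$; this is consistent with the paper's reference to Atiyah--Bott--Shapiro and with the diagram \eqref{diag}, where $Pin(1)$ sits inside $Spin(2)$ as the preimage of $\{\pm I\}$, but it is worth stating explicitly since with the opposite convention the map $e_1\mapsto\omega_n$ would fail to be well defined precisely when $n\equiv 2 \pmod 4$. Your approach trades the paper's explicit low-dimensional spin-group isomorphisms for a single Clifford-algebraic computation; the paper's case $n\equiv 0 \pmod 4$ does yield the slightly stronger byproduct that the lift exists already over $Pin(2)$, which your uniform argument does not record, but that extra information is not used in the lemma as stated.
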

\begin{proof} 
First, we consider the case that $n \equiv 0 \mod 4$ and put $n=4k$. Define $Pin(2) \to Spin(4)\cong Sp(1) \times Sp(1)$  by $g \mapsto (s(g), 1) \in Spin(4) $, where $s: Pin(2) \to \Z_2$ is the non-trivial homomorphism.
This map $Pin(2) \to Spin(4)$ covers the homomorphism $Pin(2) \to SO(4)$ corresponding to the representation $\tilde{\R}^{4}$.
Note that, for general $p,q \geq 1$, restricting a map between Clifford algebras, we obtain a natural map 
\[
Spin(p) \times Spin(q) \to Spin(p+q)
\]
covering the map $SO(p) \times SO(q) \to SO(p+q)$ defined by putting two matrices diagonally.
Therefore the above homomorphisms $Pin(2) \to Spin(4)$ and $Pin(2) \to SO(4)$ induces the diagram
\begin{align}
  \begin{CD}
    Pin(2) @>{}>> Spin(\R^{n} ) \\
  @V{}VV    @VVV \\
     SO(\R^{n})  @ = SO(\R^{n} )
  \end{CD}
\end{align}
via the direct products of $k$-copies of the homomorphisms.
The left vertical arrow is the map corresponding to $\tilde{\R}^{n}$, and thus we obtain a lift of $\tilde{\R}^{n}$ to a real spin $\Gamma$-module.

The remaining case is when $n$ is written as $n=4k+2$.
By the construction of $\Gamma$, there is the following commutative diagram: 
\begin{align}\label{diag}
  \begin{CD}
  \Z_2 @= \Z_2 @= \Z_2  \\
    @V{}VV    @VVV @VVV  \\
    \Gamma  @>>>   Pin(1)  @>>>           Spin(2 ) \\
  @V{}VV     @VVV  @VVV \\
    Pin(2)  @>>> O(1)  @>>> SO(2).
  \end{CD}
\end{align}
Taking the direct product of $(2k+1)$-copies of the map $\Gamma \to Spin(2)$ in \eqref{diag}, we obtain a homomorphism $\Gamma \to Spin(n)$ covering the map $\Gamma \to SO(n)$ corresponding to $\tilde{\R}^{n}$.
This proves the lemma.
\end{proof}
Now we show every real representation admits a lift to a real spin $Pin(2)$-module after considering stabilization. 

\begin{lem} \label{Pin(2)-module structure} For a given real  $Pin(2)$-representation $W$, there exists a  real  $Pin(2)$-representation $V$ such that $W\oplus V$ admits a lift to a real spin $Pin(2)$-module.
\end{lem}

\begin{proof}We can write $W$ as the following sum: 
\[
W = \sum_{ m(W)_{\tilR} , n(W)_\C, l(W)_\quat \geq 0 } m_{\tilR} \tilR \otimes n_\C \C \otimes  l_\quat \quat,
\]
where the sum means the direct sum of representations.
By considering the direct sum $W \oplus 7W$, we can assume that $m_{\tilR}$  is even and $n_\C \equiv 0 \mod 4$.  Using the last relation of \eqref{relation}, we can also assume that $8W$ has no $\C$ component. The representations $1$, $2\tilR$ and $\quat$ lift to real spin $Pin(2)$-modules by \cref{spin G module tilR} and \cref{spin G module H}. Since the tensor product of two real spin $G$-modules forms a real spin $G$-module in general, we have the conclusion.
\end{proof}

Let $V$ be a real spin $G$-module of dimension $n$. When $n \equiv 0 \mod 8$, there exists the Bott class $\beta (V) \in  KO^*(V)$ which generates the total cohomology ring $KO^*_G(V)$ as a $KO^*_G(pt)$-module due to the Bott periodicity theorem. For a general $n$, we fix a positive integer $m$ satisfying $m+n \equiv 0 \mod 8$ and define $\beta (V):= \beta(V \oplus \R^m) \in KO_G( V \oplus \R^m) \cong KO_G^n( V) $.
We define $e(V):= i^*\beta(V) \in KO^n_G(pt)$, where the map $i : pt \to V$ is the map defined by $i(pt)=0 \in V$. The class $e(V)$ is called the Euler class of $V$.

We use the notation $(P,\psi)$ for a spin structure on a manifold $M$.
It means that $\psi$ is a bundle isomorphism from $P \times_\pi \R^n$ to $TM$ as an $SO(n)$-bundle, where $\pi: Spin(n) \to SO(n)$ is the double cover and $P \times_\pi \R^n$ is the associated bundle for $\pi$.
\begin{defi} Let $M$ be a $G$-manifold of dimension $n$, $(P,\psi)$ be a spin structure on $M$ and $m: G\times P \to P$ be a $G$-action on the principal $Spin(n)$-bundle $P$ on $M$ which is a lift of the $G$-action on $M$.
The triple $(M,  (P,\psi) ,m )$ is called a {\it spin $G$-manifold} ($G$-manifold with an equivariant spin structure) if the following conditions are satisfied: 
\begin{enumerate}
 \item  The action $m$ commutes with the $Spin(n)$-action on $P$.
\item \label{0} Via $\psi$, the $G$-action on $P \times_\pi \R^n$ induced from the action on $P$ corresponds to the $G$-action on $TM$.
\end{enumerate} 
\end{defi}
\begin{rem}\label{equiv spin} Let $(M,  (P,\psi) ,m )$ be a spin $G$-manifold with free $G$-action.
Then we have the following diagram:  
\begin{align}
  \begin{CD}
    P @>{q^*}>> P/G \\
  @V{}VV    @VVV \\
    M  @>{q}>> M/G  ,
  \end{CD}
\end{align}
  where $q$ and $q^*$ are quotient maps.
Since the $G$-action is free on $M$, $M/G$ has a structure of a manifold. Since the $G$-action $m$ commutes with the $Spin(n)$ action on $P$, $Spin(n)$ acts on $P/G$. One can check that $P/G \to M/G$ determines a spin structure on $M/G$ by the second condition of the definition of spin $G$-manifold.
\end{rem}
\begin{rem} \label{equiv spin2}
Let $M$ be a $G$-manifold with free $G$-action. We also assume that $M/G$ has a spin structure.
We denote by $P_{ M/G}$ the principal  $Spin(n)$-bundle on $M/G$. Then we have the diagram: 
\begin{align}
  \begin{CD}
    q^* P_{ M/G} @>{}>> P_{ M/G} \\
  @V{}VV    @VVV \\
    M  @>{q}>> M/G  .
  \end{CD}
\end{align}
Since the quotient map $q: M \to M/G$ is a $G$-equivariant map (the $G$-action on $M/G$ is trivial), $q^* P_{ M/G}$ admit a $G$-action $m_{M/G}$ which commutes with $Spin(n)$-action. By the pull-back the identification $P_{ M/G} \times_\pi \R^n \cong T(M/G)$ by $q$, we obtain the identification $q^*P_{ M/G} \times_\pi \R^n \cong TM$. By the definition, one can check that $G$-action on $q^*P_{ M/G}$ and the $G$-action on $TM$ coincide. Therefore, $(M, q^*P_{ M/G}, m_{M/G})$ is a spin $G$-manifold. 

\end{rem}

We set 
\[
 \Omega^{\text{spin}}_{G, \text{free}}:= \{ \text{closed spin $G$-manifolds whose $G$-actions are free } \} / \sim.
 \] 
The relation $\sim$ is given as follows: $X_1 \sim X_2$ if there exists a compact spin $G$-manifold $Z$ whose $G$-action is free such that $\partial Z= X_1\cup (-X_2)$ as spin manifolds.
 For two real spin modules $U_0$, $U_1$ whose $G$-action on $U_0 \setminus \{ 0\}$ is free, we shall define an invariant $w(U_0,U_1)$ in $\Omega^{\text{spin}}_{G, \text{free}}$.
 To do this, we see that there exists a smooth $G$-map $S(U_0) \to U_1$ which is transverse to $0 \in U_1$ as follows.
Since the $G$-action on $S(U_{0})$ is free, the Borel construction
\[
S(U_{0}) \times_{G} U_{1} \to S(U_{0})/G
\]
gives us a vector bundle.
A section of this vector bundle which is transverse to the zero section corresponds to a $G$-map $S(U_0) \to U_1$ transverse to $0 \in U_{1}$.

\begin{defi} 
The element $w(U_0, U_1)  \in \Omega^{\text{spin}}_{G, \text{free}}$ is defined by taking a smooth $G$-map $\phi : S(U_0) \to U_1$ which is transverse to $0 \in U_1$ and setting $w(U_0, U_1):= [\phi^{-1} (0)]$. 
\end{defi}

Since $\Ker d \phi $ has the induced real spin $G$-module structure and the $G$-action on $\phi^{-1} (0)$ is free,  $w(U_0, U_1)$ determines the element in spin cobordism group with free $G$-action.
In \cite{Ka18}, it is shown that the class $w(U_0, U_1)$ is independent of the choice of $\phi$.

We use the following theorem due to Kametani.
\begin{theo}[Kametani \cite{Ka18}, Theorem~3] \label{Kametani}
Let $G$ be a compact Lie group.
Let $U_0$, $U_1$ be two real spin $G$-modules with $\dim U_0 = r_0$ and $\dim U_1 = r_1$.
Suppose that $G$-action is free on $U_0\setminus \{0\} $. If the cobordism class 
$w(U_0, U_1) \in \Omega^{\text{spin}}_{G, \text{free}}$ is zero, there exists an element $\alpha \in  KO_{G}^{r_1-r_0}(pt)$ such that 
\begin{align}\label{di}
e(U_1) = \alpha e(U_0).
\end{align}
\end{theo}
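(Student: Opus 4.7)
The plan is to reduce the statement to a vanishing in equivariant $KO$-theory of $S(U_{0})$ via a Gysin-type long exact sequence, and then exploit the bordism hypothesis to produce the required vanishing. I would apply $KO_{G}^{*}$ to the cofiber sequence of pointed $G$-spaces $S(U_{0})_{+} \to D(U_{0})_{+} \to S^{U_{0}}$. Since $D(U_{0})$ is $G$-equivariantly contractible, $KO_{G}^{*}(D(U_{0})_{+}) \cong KO_{G}^{*}(\mathrm{pt})$, and because $U_{0}$ is a real spin $G$-module, the Bott class $\beta(U_{0})$ furnishes a Thom isomorphism $\widetilde{KO}_{G}^{*}(S^{U_{0}}) \cong KO_{G}^{*-r_{0}}(\mathrm{pt})$, under which the canonical map $\widetilde{KO}_{G}^{*}(S^{U_{0}}) \to KO_{G}^{*}(\mathrm{pt})$ is identified with multiplication by $e(U_{0})$. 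The resulting long exact sequence
\[
KO_{G}^{r_{1}-r_{0}}(\mathrm{pt}) \xrightarrow{\,\cdot\, e(U_{0})\,} KO_{G}^{r_{1}}(\mathrm{pt}) \xrightarrow{\ \rho\ } KO_{G}^{r_{1}}(S(U_{0}))
\]
reduces the statement to showing that $\rho(e(U_{1}))=0$.

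Next I would give $\rho(e(U_{1}))$ a geometric description. Since $G$ acts freely on $S(U_{0})$, one has $KO_{G}^{r_{1}}(S(U_{0})) \cong KO^{r_{1}}(P)$ with $P := S(U_{0})/G$, and the trivial $G$-bundle $S(U_{0}) \times U_{1}$ descends, via the real spin $G$-module structure on $U_{1}$ and \cref{real spin module}, to a spin vector bundle $W := S(U_{0}) \times_{G} U_{1} \to P$ of rank $r_{1}$ whose Euler class equals $\rho(e(U_{1}))$. Because $\phi$ is a $G$-equivariant section of the trivial $G$-bundle transverse to $0$, it descends to a transverse section $\bar{\phi}$ of $W$ whose zero locus is the spin submanifold $\bar{N} := \phi^{-1}(0)/G$, and hence $\rho(e(U_{1})) = \bar{\imath}_{!}(1_{\bar{N}})$, where $\bar{\imath}:\bar{N}\hookrightarrow P$ and $\bar{\imath}_{!}$ denotes the $KO$-theoretic Gysin push-forward of the spin embedding.

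It then remains to show $\bar{\imath}_{!}(1_{\bar{N}}) = 0$ in $KO^{r_{1}}(P)$. By hypothesis $w(U_{0},U_{1}) = [\phi^{-1}(0)] = 0$ in $\Omega^{\mathrm{spin}}_{G,\mathrm{free}}$, so there exists a compact free spin $G$-manifold $Z$ with $\partial Z = \phi^{-1}(0)$; taking quotients gives a compact spin manifold $Z/G$ with $\partial(Z/G) = \bar{N}$. Following Kametani, the idea is to upgrade this abstract null-bordism to an ambient one: glue $Z$ to $S(U_{0})$ along $N = \phi^{-1}(0)$, extend the equivariant map $\phi$ over $Z$ to a nowhere-zero $G$-equivariant map into $U_{1}$ (which is possible because the target $U_{1}\setminus\{0\}$ can be equivariantly mapped into from $Z$ using the collar of $N$ in $S(U_{0})$ combined with standard equivariant extension/obstruction theory on the free $G$-manifold $Z$), and thereby realize $\bar{N} \hookrightarrow P$ as the boundary of a spin submanifold inside an ambient enlargement of $P$. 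The bordism invariance of $\bar{\imath}_{!}$ then gives $\bar{\imath}_{!}(1_{\bar{N}}) = 0$, and the Gysin sequence from the first paragraph produces the sought $\alpha \in KO_{G}^{r_{1}-r_{0}}(\mathrm{pt})$ with $e(U_{1}) = \alpha \cdot e(U_{0})$.

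The hard part is the last step, namely converting the abstract free spin $G$-bordism $Z$ into a bordism of embeddings inside $P$. In general the spin bordism class of $\bar{N}$ alone does not control $\bar{\imath}_{!}(1_{\bar{N}})$, so one must use the specific structure of the problem — that the normal bundle of $\bar{N}$ in $P$ is isomorphic to $\bar{N}\times U_{1}$ and that $\phi$ itself gives a canonical trivialization on the collar — to extend $\phi$ equivariantly across $Z$ and thereby realize the null-bordism inside an equivariant thickening of $P$. This equivariant Thom-collapse / bordism-of-embeddings argument is the technical core of \cite{Ka18}.
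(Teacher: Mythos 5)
First, a point of reference: the paper does not prove \cref{Kametani} at all; it is quoted verbatim from Kametani's preprint \cite{Ka18} and used as a black box. So there is no internal proof to compare your argument against, and your proposal has to stand on its own.

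Your first two steps are correct and are surely how any proof must begin. Applying $KO_G^{\ast}$ to the cofiber sequence $S(U_0)_+ \to D(U_0)_+ \to S^{U_0}$ and using the Thom isomorphism given by $\beta(U_0)$ does identify the composite $KO_G^{\ast-r_0}(pt)\to \widetilde{KO}_G^{\ast}(S^{U_0})\to KO_G^{\ast}(pt)$ with multiplication by $e(U_0)$, so exactness reduces the theorem to showing that $e(U_1)$ restricts to zero on $S(U_0)$; and since the action on $S(U_0)$ is free, that restriction is the $KO$-Euler class of $W=S(U_0)\times_G U_1\to P$, which equals $\bar{\imath}_!(1_{\bar N})$ for the zero locus of the transverse section induced by $\phi$. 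All of this is standard and correct.

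The genuine gap is the last step, and you have in effect flagged it yourself. The hypothesis $w(U_0,U_1)=0$ supplies only an \emph{abstract} free spin $G$-null-bordism $Z$ of $N=\phi^{-1}(0)$, and, as you correctly observe, the abstract spin bordism class of $\bar N$ does not determine $\bar{\imath}_!(1_{\bar N})$ (a point in $S^2$ is null-bordant but pushes forward to a generator of $\widetilde{K}^0(S^2)$). What would suffice is an \emph{ambient} or \emph{normal} null-bordism: a $G$-map of $Z$ into $S(U_0)\times[0,1]$ extending the inclusion of $N$, together with an extension of the normal data furnished by $d\phi$. Your proposed construction of this is not an argument. ``Gluing $Z$ to $S(U_0)$ along $N$'' is not a manifold operation ($N$ has codimension $r_1$ in $S(U_0)$ while $\partial Z=N$), and the claim that $\phi$ extends over $Z$ to a nowhere-zero equivariant map ``by standard equivariant extension/obstruction theory'' is unjustified: $\phi$ vanishes on $N=\partial Z$, so there is no nowhere-zero boundary condition to extend, and the obstructions to extending a map into $U_1\setminus\{0\}\simeq_G S(U_1)$ over $Z$ are themselves Euler-class-type obstructions, so assuming their vanishing is circular. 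In short, you have correctly reduced the theorem to the statement that an abstract null-bordism forces $\bar{\imath}_!(1_{\bar N})=0$, but the mechanism that makes this true (in \cite{Ka18} it proceeds through an index-theoretic/ABS-type construction on $Z$ rather than an ambient embedding) is precisely what is missing from your write-up.
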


Furuta--Kametani \cite{FK05} showed the following inequality under the divisibility of the Euler class.
We shall combine \cref{Kametani} with \cref{Furuta Kametani} in \cref{sebsection: Completion of the proof of Main Thm}.

\begin{theo} [Furuta--Kametani \cite{FK05}]
\label{Furuta Kametani}
Let $ m_0$, $m_1$ be non-negative integers and
$l_1$ be a positive even number.
Suppose that there exists an element 
\[
\alpha \in  KO_{\Gamma}^{4m_1+l_1-4m_0}(pt)
\]
such that 
\begin{align}\label{divisibility}
e(\quat^{m_1})e( \tilR^{l_1}) = \alpha e(\quat^{m_0}) \in KO_{\Gamma}^{4m_1+l_1}(pt),
\end{align}
where the definition of $\Gamma$ is given in \eqref{definition of Gamma}.
Then the inequality 
\[
2(m_1-m_0)+ l_1-2 \geq 0
\]
holds.
\end{theo}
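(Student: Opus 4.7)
My approach is to translate the divisibility relation \eqref{divisibility} in $KO^*_\Gamma(pt)$ into a $2$-primary arithmetic inequality by evaluating the Euler classes on an explicitly understood subring, then reading off the dimension bound. The plan rests on Bott periodicity for equivariant $KO$-theory together with the multiplicativity $e(V\oplus W) = e(V)e(W)$ of Euler classes, which together reduce the entire question to understanding the two basic classes $e(\quat) \in KO_\Gamma^{4}(pt)$ and $e(\tilR) \in KO_\Gamma^{1}(pt)$ and their products.

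First I would fix generators of $KO^*_\Gamma(pt)$ arising from Bott periodicity (the order-$2$ element $\eta \in KO^{-1}$, the Bott element $u \in KO^{-8}$, etc.) and record the basic relations $e(\quat^m) = e(\quat)^{m}$ and $e(\tilR^{l}) = e(\tilR)^{l}$. The pullback diagram \eqref{definition of Gamma} exhibits $\Gamma$ as an extension with both a distinguished central $S^{1}$ from $Pin(2)$ and a distinguished order-$2$ factor from $Pin(1)$; restricting along either of these gives a computable map from $KO^{*}_\Gamma(pt)$ to a ring of the shape $R(S^{1}) \otimes KO^{*}(pt)$ or to $KO^{*}_{\Z/2}(pt)$, in which the restrictions of $e(\quat)$ and $e(\tilR)$ are explicit (the former being essentially $(1-\quat)$ under the Bott identification, the latter a sign-twist class measured by $\eta$).

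With these tools in hand, I would restrict the equation $e(\quat)^{m_{1}}e(\tilR)^{l_{1}} = \alpha\, e(\quat)^{m_{0}}$ to a well-chosen subgroup $H\subset \Gamma$ where the $2$-primary valuations of both sides can be compared directly. The element $\alpha$ lies in $KO^{4(m_{1}-m_{0})+l_{1}}_{\Gamma}(pt)$, and because $e(\tilR)$ behaves like a $2$-torsion class in the relevant quotient while $e(\quat)$ carries a controlled $2$-primary divisibility, the existence of such an $\alpha$ will force a lower bound on the exponent $2(m_{1}-m_{0})+l_{1}$ in exactly the form claimed. In particular, I expect the inequality to emerge from the statement that one cannot divide an element of $2$-primary valuation $v(e(\quat)^{m_{1}}e(\tilR)^{l_{1}})$ by $e(\quat)^{m_{0}}$ unless the dimension bookkeeping is consistent after accounting for the $8$-fold Bott periodicity.

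The main obstacle will be the explicit $2$-primary valuation computation: the interaction of the quaternionic representation $\quat$ (whose Euler class lives in degree $4$ and is related to the symplectic Bott class) with the real sign representation $\tilR$ (whose Euler class is tied to $\eta$) is nontrivial because the $KO$-theoretic Euler classes are not freely generated but satisfy relations coming from $\eta^{3}=0$ and $\omega^{2}=4u$. Once the explicit expressions in $KO^{*}_{H}(pt)$ are in place, the bound $0 \leq 2(m_{1}-m_{0}) + l_{1} - 1$ follows by matching degrees and comparing the valuations of the two sides of the restricted equation.
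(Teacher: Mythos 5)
The paper does not prove this statement; it is quoted verbatim from Furuta--Kametani \cite{FK05} and used as a black box, so there is no internal proof to compare against. Judged on its own terms, your proposal is a strategy outline rather than a proof, and the gap sits exactly where the theorem's content lies. You correctly identify the standard shape of such arguments (multiplicativity of Euler classes, Bott periodicity, restriction to a subgroup, a $2$-primary divisibility obstruction), but you never carry out the one step that everything depends on: the explicit determination of $e(\quat)$ and $e(\tilR)$ as elements of $KO^{\ast}_{\Gamma}(pt)$ (or of $KO^{\ast}_{H}(pt)$ for a concretely chosen $H$), and the deduction from \eqref{divisibility} of the precise inequality $0\leq 2(m_{1}-m_{0})+l_{1}-1$, including the crucial ``$-1$''. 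You flag this yourself as ``the main obstacle,'' which is an accurate self-assessment: without it there is no proof.

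Two more concrete problems with the sketch. First, the subgroup $H$ is never chosen, and the natural candidates you name do not suffice individually: restricting to the central $S^{1}\subset Pin(2)$ forgets exactly the extra symmetry (the element $j$, equivalently the nontrivial component of $\Gamma$) that produces the improvement from the index inequality to the ``$+1$''-shifted one, while restricting to the $\Z/2$ quotient trivializes $\quat$'s contribution. The force of Furuta--Kametani's theorem comes from the interaction of the two, encoded in the ring structure of $KO^{\ast}_{\Gamma}(pt)$, and this is precisely what is not computed. (Also, $KO^{\ast}_{S^{1}}(pt)$ is not of the form $R(S^{1})\otimes KO^{\ast}(pt)$; the real/complex/quaternionic types of irreducible representations enter.) Second, the closing claim that the bound ``follows by matching degrees and comparing the valuations'' is empty as stated: the degrees already match by hypothesis, since $\alpha$ is assumed to lie in $KO^{4m_{1}+l_{1}-4m_{0}}_{\Gamma}(pt)$, so degree bookkeeping yields no inequality at all; the entire content is in the valuation comparison, which is the part not performed. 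As it stands the proposal would need the full Furuta--Kametani computation inserted to become a proof.
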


\begin{proof}
This \lcnamecref{Furuta Kametani} is deduced from Proposition~34 in \cite{FK05} as follows.
Although Proposition~34 is about an equivariant $KO$-theory of an $n$-dimensional torus $\tilde{T}^{n}$ with some group action, for our purpose, we need only the case that $n=0$, namely an equivariant $KO$-theory of a point.
Let $S=\emptyset$ in the setting of Proposition~34.
In this situation, we may see that $(\alpha_{\varphi})_{S} = \alpha_{\varphi}$ (see Lemma~31 in \cite{FK05}).
Moreover, we have $N_{S}=1$ for $S=\emptyset$.
(See the sentence between Theorem~2 and Corollary~3 in \cite{FK05}.)
Here some notation in this paper corresponds to that in \cite{FK05} as follows:
\begin{align}
\label{eq: correspondence between KT and FK}
m_{0}=y+k,\quad m_{1}=y\quad {\rm and}\quad  l_{1}=l.
\end{align}

We first check the divisibility condition \eqref{divisibility} implies (14) in \cite{FK05} provided that $n=0$ and $S=\emptyset$.
The notation $\quat_{1}$ in \cite{FK05} is the representation $\quat$ in this paper.
The number $A$ and $A_{1}$ are zero in the case that $S = \emptyset$.
Moreover, the last two factors in the right-hand-side of (14) in \cite{FK05} are equal to $1$ again for $S = \emptyset$.
Therefore it follows from \eqref{eq: correspondence between KT and FK} that the condition \eqref{divisibility} is equivalent to (14) in \cite{FK05} for $n=0$ and $S=\emptyset$.

Second we check that the desired inequality for $l_{1}, m_{0}$ and $m_{1}$ follows from Proposition~34.
Because of \eqref{eq: correspondence between KT and FK}, the number $d=l-4k$ in \cite{FK05} corresponds to $l_{1}-4(m_{0}-m_{1})$.
Since we supposed that $l_{1}$ is an even number, 
so is $l_{1}-4(m_{0}-m_{1})$.
For such $d$, 
Proposition~34 implies that $l/2-k-1 \geq 0$, namely $l_{1}/2 + m_{1}-m_{0} -1 \geq 0$.
This is the desired inequality.
\end{proof}

\begin{rem}
A sketch of the proof of the Proposition~34 in \cite{FK05} is as follows.
Through a direct computation,
Proposition~34 follows from Lemma~35 in \cite{FK05}, which gives a presentation of an element $\alpha \in KO_{\Gamma}^{\rm even}(\tilde{\R}^{\rm even})$ appearing in an equation involving the Euler classes for the representations $\quat$ and $\tilde{\R}$.
One may calculate the $j$-trace and the $j^{2}$-trace for the image of $\alpha$ under the complexification $KO_{\Gamma}^{\rm even} \to K_{\Gamma}^{\rm even}$, and this calculation determines the complexification of $\alpha$.
The kernel of the complexification is shown to be torsion in Lemma~15 in \cite{FK05}, and this is enough to prove the statement of Lemma~35.
\end{rem}

\subsection{Moduli theory}
In this subsection, we review the moduli theory for 4-manifolds with periodic ends. The setting of gauge theory for such manifolds is developed by Taubes in \cite{T87}.  
All functional spaces appearing in this subsection are considered on the end-periodic $4$-manifold $Z$, introduced at the beginning of this section, and therefore we sometimes drop $Z$ from our notation.

We fix a real number $\delta$ satisfying $0<\delta<\delta_0$ and an integer $k \geq 3$, where $\delta_0$ is introduced in Subsection 2.1.
The space of connections of the determinant line bundle of the given spin structure is defined by 
 $\conn_{k,\delta}(Z) := A_0 + L^2_{k,\delta}(i\Lambda^1(Z))$. We set the configuration space by 
 $\conf_{k,\delta}(Z)  :=\conn_{k,\delta}(Z) \oplus L^2_{k,\delta}( S^+)$.   The irreducible part of  $\conf_{k,\delta}(Z)$ is denoted by $\conf^*_{k,\delta}(Z)$.
 The gauge group $\gauge_{k+1,\delta}$ for the given spin structure is defined by 
 \[
 \gauge(Z)_{k+1,\delta}:= \Set{ g \in L^2_{k+1,\text{loc}} (Z, S^1) | dg \in L^2_{k,\delta} }.
 \]
 The topology of  $\gauge(Z)_{k+1,\delta}$ is given by the  metric
 \[
 \|g-h\| := \|dg - dh\|_{L^2_{k,\delta}} + |g(x_0)-h(x_0)| ,
 \]
 where $x_0 \in W_0$ is a fixed point.
The space $\gauge_{k+1,\delta}$ has a structure of a Banach Lie group.
Let us define a normal subgroup of  $\gauge(Z)_{k+1,\delta}$ (corresponding to the so-called based gauge group) by
\[
\widetilde{\gauge}(Z)_{k+1,\delta}:= \Set{ g \in  \gauge(Z)_{k+1,\delta} | L_{x_0}(g) =1 },
\]
where $L_{x_0}(g) = \displaystyle \lim_{n \to \infty} g(T^n(x_0))$.
Note that we have the exact sequence 
\[
1 \to \widetilde{\gauge}(Z)_{k+1,\delta} \to  \gauge(Z)_{k+1,\delta} \xrightarrow{L_{x_0}} S^1 \to 1.
\]
 The space $\conf_{k,\delta}(Z)$ is acted by $\gauge_{k+1,\delta}$ via pull-back, and moreover
one can show that ${\gauge}_{k+1,\delta}$ acts smoothly on $\conf_{k,\delta}(Z)$ and  $\widetilde{\gauge}_{k+1,\delta}$ acts freely on $\conf_{k,\delta}(Z)$. 
The tangent spaces of $\gauge_{k+1,\delta}$ and  $\widetilde{\gauge}_{k+1,\delta}$ can be described as follows.
(See Lemma~7.2 in \cite{T87})

\begin{lem}The following equalities
\begin{align*}
T_e\widetilde{\gauge}(Z)_{k+1,\delta} 
& = \Set{a \in L^2_{k+1,\text{loc}} (i\Lambda^1(Z))| da \in L^2_{k,\delta},\  \lim_{n\to \infty} a(T^n(x_0))= 0}\\
& = L^2_{k+1,\delta}(i \Lambda (Z) ), \text{and}\\
T_e {\gauge}(Z)_{k+1,\delta}
& = \Set{a \in L^2_{k+1,\text{loc}} (i\Lambda^1(Z))| da \in L^2_{k,\delta}}
\end{align*}
hold.
\end{lem}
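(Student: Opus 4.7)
The first and third equalities compute the tangent space by differentiating the defining conditions of the gauge groups at the identity. If $g_t$ is a smooth path in $\gauge(Z)_{k+1,\delta}$ with $g_0 = 1$, then $a := -i \dot g_0$ takes values in $i\R$ and inherits both $a \in L^2_{k+1,\mathrm{loc}}$ and $da \in L^2_{k,\delta}$ from the corresponding properties of $g_t$ and $dg_t$. Conversely, given such an $a$, the path $t \mapsto e^{ta}$ lies in $\gauge(Z)_{k+1,\delta}$: the identity $d(e^{ta}) = t(da)e^{ta}$ together with $|e^{ta}| \equiv 1$ and the multiplication property of $L^2_{k+1}$ (valid since $k+1 \geq 4 > \dim Z/2$) gives $d(e^{ta}) \in L^2_{k,\delta}$. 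For the based subgroup $\widetilde{\gauge}$, the condition $L_{x_0}(g_t) = \lim_n g_t(T^n x_0) = 1$ differentiates to the extra constraint $\lim_n a(T^n x_0) = 0$, which is precisely what the exponential map preserves.

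The nontrivial assertion is the second equality, identifying the implicitly defined space with the weighted Sobolev space $L^2_{k+1,\delta}$. The forward inclusion is immediate: Sobolev embedding (using $k+1 > \dim Z /2$) shows $e^{\tau\delta} a$ is continuous and tends to $0$ at infinity, and $\|da\|_{L^2_{k,\delta}} \leq \|a\|_{L^2_{k+1,\delta}}$. The reverse inclusion is the main analytic step. My plan is a three-stage estimate. First, uniform bounded geometry on the periodic end yields a Sobolev--Poincar\'e-type inequality
\[
\|a\|_{L^2_{k+1}(W_n)} \leq C\bigl(\|da\|_{L^2_k(W_n)} + |a(T^n x_0)|\bigr)
\]
with $C$ independent of $n$. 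Second, Sobolev embedding $L^2_k(W_n) \hookrightarrow C^0(W_n)$ together with integration of $da$ along a fixed-length path in $W_n$ between $T^n x_0$ and $T^{n+1} x_0$ gives $|a(T^n x_0) - a(T^{n+1} x_0)| \leq C\|da\|_{L^2_k(W_n)} \leq C e^{-n\delta}\|da\|_{L^2_{k,\delta}(W_n)}$; telescoping and using $a(T^n x_0) \to 0$ then yields
\[
|a(T^n x_0)| \leq C \sum_{m \geq n} e^{-m\delta}\|da\|_{L^2_{k,\delta}(W_m)}.
\]
Third, set $c_m := \|da\|_{L^2_{k,\delta}(W_m)}$ and $b_n := e^{n\delta} |a(T^n x_0)|$, so that $b_n \leq C\sum_{\ell \geq 0} e^{-\ell\delta} c_{n+\ell}$, a discrete convolution of an $\ell^1$ kernel with an $\ell^2$ sequence; Young's inequality yields $(b_n) \in \ell^2$. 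Multiplying the Poincar\'e estimate by $e^{2n\delta}$ and summing gives $\|a\|_{L^2_{k+1,\delta}(Z)}^2 \lesssim \sum_n c_n^2 + \sum_n b_n^2 < \infty$.

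The main technical obstacle lies in the summability estimate of the third stage: a naive Cauchy--Schwarz in the telescoping sum would only yield the pointwise bound $|a(T^n x_0)| \leq C e^{-n\delta}$, which is not summable against the weight $e^{2n\delta}$. Using the convolution structure explicitly, together with Young's inequality exploiting that the kernel is $\ell^1$, is what absorbs the exponential loss. All remaining ingredients --- uniformity of the Sobolev--Poincar\'e constant across the periodic end, smoothness of the exponential map $a \mapsto e^{ta}$ between the relevant function spaces, and the Banach-algebra property of $L^2_{k+1}$ in dimension four --- are routine consequences of the translation invariance of the periodic geometry and of $k+1$ being above the Sobolev threshold.
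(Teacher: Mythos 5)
Your proof is correct, and it follows the standard argument: the paper itself gives no proof of this lemma, deferring entirely to Lemma~7.2 of Taubes \cite{T87}, and your weighted Poincar\'e estimate on each $W_n$ combined with the telescoping bound and the $\ell^1 * \ell^2 \subset \ell^2$ Young inequality is exactly the mechanism that makes the reverse inclusion work for $\delta>0$. The only points worth tidying are cosmetic: the statement's $i\Lambda^1(Z)$ should be read as $i\Lambda^0(Z)$ (tangent vectors to maps into $S^1$ are $i\R$-valued functions), the path from $T^n x_0$ to $T^{n+1}x_0$ crosses $W_n\cup W_{n+1}$ so $c_n$ should be replaced by $c_n+c_{n+1}$, and for the unbased group one should note that the summability of the differences $|a(T^{n+1}x_0)-a(T^nx_0)|$ already forces $a$ to be bounded, which is what justifies the multiplication estimate for $d(e^{ta})=t\,(da)\,e^{ta}$.
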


We use the following notations:  
\begin{itemize}
\item $\quot_{k,\delta}(Z):= \conf_{k,\delta}(Z)/ \gauge(Z)_{k+1,\delta}$, 
\item $\widetilde{\quot}_{k,\delta}(Z):= \conf_{k,\delta}(Z)/ \widetilde{\gauge}(Z)_{k+1,\delta}$ and
\item  $\quot^*_{k,\delta}(Z):= \conf_{k,\delta}^*(Z)/ \gauge(Z)_{k+1,\delta}$.
\end{itemize}

As in Lemma~7.3 of \cite{T87}, one can show that the spaces ${\quot}^*_{k,\delta}(Z)$ and $\widetilde{\quot}_{k,\delta}(Z)$ have structures of Banach manifolds.
In the proof of this fact, the following decomposition is used. 

\begin{lem}\label{lem: Lambda one decomposition}For a fixed real number $\delta$ with $0<\delta < \delta_0$, there is the following $L^{2}_{\delta}$-orthogonal decomposition
\begin{align*}
L^{2}_{k,\delta}(i\Lambda^{1}(Z))
=& \Ker(d^{\ast_{L^2_\delta}} : L^{2}_{k,\delta}(i\Lambda^{1}(Z)) \to L^{2}_{k-1,\delta}(i\Lambda^{0}(Z)))\\
 &\oplus
\im(d : L^{2}_{k+1,\delta}(i\Lambda^{0}(Z)) \to L^{2}_{k,\delta}(i\Lambda^{1}(Z))).
\end{align*}

\end{lem}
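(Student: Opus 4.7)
My plan is to adapt the standard Hodge-theoretic decomposition to the weighted Sobolev setting, using the Fredholmness of the Atiyah--Hitchin--Singer complex in \cref{allops} as the key analytic input.

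I would first make the $L^{2}_\delta$-adjoint explicit: integrating by parts against the weight $e^{2\tau\delta}$ yields
\begin{equation*}
d^{*_{L^2_\delta}} = d^{*} - 2\delta\,\iota_{\nabla \tau},
\end{equation*}
a bounded first-order operator $L^{2}_{k,\delta}(i\Lambda^{1}(Z)) \to L^{2}_{k-1,\delta}(i\Lambda^{0}(Z))$. With this in hand, $L^{2}_\delta$-orthogonality of the two summands is immediate: for $a \in \Ker d^{*_{L^2_\delta}}$ and $f \in L^{2}_{k+1,\delta}(i\Lambda^{0}(Z))$,
\begin{equation*}
\lb a, df \rb_{L^{2}_\delta} = \lb d^{*_{L^2_\delta}} a, f \rb_{L^{2}_\delta} = 0,
\end{equation*}
with no boundary contribution at the periodic end thanks to the weight. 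In particular $\Ker d^{*_{L^2_\delta}} \cap \im d = 0$.

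The substantive step is the spanning property. Given $a \in L^{2}_{k,\delta}(i\Lambda^{1}(Z))$, I would look for $f \in L^{2}_{k+1,\delta}(i\Lambda^{0}(Z))$ satisfying $a - df \in \Ker d^{*_{L^2_\delta}}$, equivalently, the weighted Poisson equation
\begin{equation*}
\Delta_\delta f := d^{*_{L^2_\delta}} d\, f = d^{*_{L^2_\delta}} a.
\end{equation*}
Fredholmness of the AHS complex from \cref{allops} passes to Fredholmness of the elliptic operator $\Delta_\delta: L^{2}_{k+1,\delta}(i\Lambda^{0}) \to L^{2}_{k-1,\delta}(i\Lambda^{0})$. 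Injectivity of $\Delta_\delta$ is easy: pairing $\Delta_\delta f = 0$ with $f$ in $L^{2}_\delta$ gives $\|df\|_{L^{2}_\delta}^{2} = 0$, so $f$ is locally constant, and weighted integrability together with connectedness of $Z$ forces $f = 0$. A standard Fredholm pairing argument shows that any $L^{2}_\delta$-cokernel element of $\Delta_\delta$ is automatically orthogonal to every member of $\im d^{*_{L^2_\delta}}$, so the right-hand side $d^{*_{L^2_\delta}} a$ lies in $\im \Delta_\delta$ and the required $f$ exists.

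The main obstacle I anticipate is reconciling the $L^{2}_\delta$-orthogonality demanded in the statement with the $L^{2}_{k,\delta}$-regularity of the summands, since the Hilbert-space decomposition naturally occurs at the level of $L^{2}_\delta$ while the claim concerns a higher Sobolev scale. I plan to resolve this by an elliptic bootstrap: weighted elliptic regularity for $\Delta_\delta$, valid because $\delta < \delta_0$ so that \cref{fred} supplies the relevant estimates, promotes any preliminary low-regularity solution of $\Delta_\delta f = d^{*_{L^2_\delta}} a$ to $L^{2}_{k+1,\delta}$, whereupon $a - df \in L^{2}_{k,\delta} \cap \Ker d^{*_{L^2_\delta}}$ completes the decomposition at the required regularity.
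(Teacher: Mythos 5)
Your proof is correct, but it follows a different and more hands-on route than the paper. The paper's argument is essentially two lines: Fredholmness of $d+(d^{+})^{\ast}$ (from \cref{allops}) shows that $\im(d : L^{2}_{k+1,\delta}(i\Lambda^{0}) \to L^{2}_{k,\delta}(i\Lambda^{1}))$ is closed, one then writes $L^{2}_{k,\delta}(i\Lambda^{1})$ as this image plus its $L^{2}_{\delta}$-orthogonal complement, and identifies that complement with $\Ker d^{\ast_{L^{2}_{\delta}}}$. You instead produce the decomposition of a given $a$ explicitly by solving the weighted Poisson equation $\Delta_{\delta}f = d^{\ast_{L^{2}_{\delta}}}a$, proving injectivity of $\Delta_{\delta}$ by the pairing argument (with the correct observation that connectedness plus the growing weight kills constants) and surjectivity by self-adjointness of $\Delta_{\delta}$ with respect to the $L^{2}_{\delta}$-pairing together with weighted elliptic regularity for the cokernel. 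What your approach buys is precisely the point you flag at the end: the paper's one-step "closed subspace plus orthogonal complement" decomposition is taken with respect to the $L^{2}_{\delta}$ inner product, which is not complete on $L^{2}_{k,\delta}$, so the projection theorem does not apply verbatim and one really does need to know that the $L^{2}_{\delta}$-orthogonal projection onto $\im d$ preserves the $L^{2}_{k,\delta}$ regularity scale; solving $\Delta_{\delta}f = d^{\ast_{L^{2}_{\delta}}}a$ with $f \in L^{2}_{k+1,\delta}$ is exactly how one justifies that. The price is that you must know $\Delta_{\delta}$ is Fredholm between the stated weighted spaces; you assert this follows from \cref{allops}, which is true but deserves a sentence (e.g.\ apply Taubes' framework to the periodic second-order operator $\Delta_{\delta} = d^{\ast}d - 2\delta\,\iota_{\nabla\tau}d$ directly, or compose $d+(d^{+})^{\ast}$ with its $L^{2}_{\delta}$-adjoint and restrict to $0$-forms). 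With that point made explicit, your argument is complete and, if anything, more airtight than the paper's.
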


\begin{proof}
Since the operator \eqref{derham} is Fredholm by the choice of $\delta_0$ (see the end of \cref{delta0}), the proof is essentially same as in the case of the decomposition for closed oriented $4$-manifolds. 
\end{proof}

The {\it monopole map}  $\nu_{h} : \conf_{k,\delta}(Z) \to L^2_{k-1,\delta}(i\Lambda^+(Z)\oplus S^-)$ is defined by 
 \[
 \nu_h(A,\Phi):= (F_A^+- \sigma(\Phi,\Phi)- i h , D_A(\Phi)),
 \]
 where $\sigma(\Phi,\Phi)$ is the trace-free part of $\Phi \otimes \Phi^*$ and regarded as an element of $L^{2}_{k-1,\delta}(i\Lambda^{+}(Z))$ via the Clifford multiplication and $h$ is a compactly supported self-dual $2$-form.
 We denote  $\nu_0(A,\Phi)$ by  $\nu_h(A,\Phi)$.
Recall that the map $L^2_{k,\delta} \times L^2_{k,\delta} \to L^2_{k ,\delta}$ is continuous for $k>2$ because of the Sobolev multiplication theorem.
Since we consider a spin structure rather than general $\spc$ structures, the monopole map is a $Pin(2)$-equivariant map.
 We define the {\it monopole moduli spaces} for $Z$ by
  \[
\M_{k,\delta,h}(Z):= \Set{ [(A, \Phi)] \in \quot_{k,\delta}(Z) | \nu_h (A,\Phi) =0 }, 
\]
 \[
\M^*_{k,\delta,h}(Z):= \Set{ [(A, \Phi)] \in \quot^*_{k,\delta}(Z) | \nu_h (A,\Phi) =0 }\text{ and }
\]
 \[
\widetilde{\M}_{k,\delta,h}(Z):= \Set{ [(A, \Phi)] \in \widetilde{\quot}_{k,\delta}(Z) | \nu_h (A,\Phi) =0 }.
\]
For simplicity, we denote $\M_{k,\delta,0}(Z)$, $\M^*_{k,\delta,0}(Z)$ and $ \widetilde{\M}_{k,\delta,0}(Z)$ by $ \M_{k,\delta}(Z)$, $\M^*_{k,\delta}(Z)$ and $\widetilde{\M}_{k,\delta}(Z)$.
At the end of this subsection, we study the local structure of $d\nu$ near $[(A_0,0)]$.
We consider the following bounded linear map 
\begin{align}\label{reducible operator}
 (d \nu+ d^{*_{L^2_\delta}})_{(A_0,0)} : \conf_{k,\delta}(Z)   \to L^2_{k-1,\delta}( S^-\oplus i \Lambda^+ \oplus  i\Lambda^0).
 \end{align}

\begin{prop}
\label{formal dimension}
Suppose that $X$ admits a PSC metric.
Then there exists $\delta_1 \in (0, \delta_0)$ satisfying the following condition:
for each $\delta \in (0, \delta_1)$, there exist positive numbers $l_0$ and $l_1$ with $l_1-l_0 =2 \ind_{\C} (D_{A_0}:L^2_k(Z;S^{+}) \to L^2_{k-1}(Z;S^{-}))$ such that there exist isomorphims
\begin{itemize}
\item 
 $\Ker  (d \nu+ d^{*_{L^2_\delta}})_{(A_0,0)} \cong  \quat^{l_1}$,  
 \item   $\Coker  (d \nu+ d^{*_{L^2_\delta}})_{(A_0,0)} \cong  \quat^{l_0} \oplus \tilR^{b^+(M)}$ 
 \end{itemize}
as representations of $Pin(2)$.
\end{prop}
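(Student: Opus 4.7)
At the reducible configuration $(A_{0},0)$ the quadratic term $\sigma(\Phi,\Phi)$ vanishes to second order in $\Phi$, so the linearization of the monopole map decouples: $(a,\phi)\mapsto (d^{+}a,\,D_{A_{0}}\phi)$. Combining this with the weighted Coulomb slice, the full operator $(d\nu+d^{*_{L^{2}_{\delta}}})_{(A_{0},0)}$ splits $Pin(2)$-equivariantly as the direct sum $\bigl(d^{+}\oplus d^{*_{L^{2}_{\delta}}}\bigr)\oplus D_{A_{0}}$ of an AHS-type operator on $1$-forms and a Dirac operator on positive spinors. Both summands are Fredholm for every $\delta\in(0,\delta_{0})$ by \cref{allops}, since a PSC metric is in particular admissible. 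Accordingly, the kernel and cokernel split as $Pin(2)$-representations, and it suffices to analyze each summand separately.

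For the Dirac summand, the spin structure endows $S^{\pm}$ with a quaternionic structure commuting with $D_{A_{0}}$, and this quaternionic structure is intertwined with the $j\in Pin(2)$-action. Hence both $\Ker D_{A_{0}}$ and $\Coker D_{A_{0}}$ are quaternionic $Pin(2)$-modules, and therefore isomorphic as $Pin(2)$-representations to $\quat^{l_{1}}$ and $\quat^{l_{0}}$ respectively, for some nonnegative integers $l_{0},l_{1}$. The index relation $l_{1}-l_{0}=2\ind_{\C}(D_{A_{0}})$ will be obtained by comparing the index of the weighted Dirac operator with that of the unweighted one via Taubes' end-periodic index theorem~\cite{T87}, accounting for the weight correction arising from crossing the critical line $\mathrm{Re}\,z=0$ in the Fourier--Laplace picture.

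For the AHS-type summand, the $Pin(2)$-action on $i\Lambda^{*}(Z)$ factors through the sign homomorphism $Pin(2)\to\{\pm 1\}$, so any summand of its kernel or cokernel is automatically a copy of $\tilR$. It therefore remains to show that the kernel vanishes and that the cokernel has real dimension $b^{+}(M)$. For the vanishing of the kernel, a kernel element $a\in L^{2}_{k,\delta}(i\Lambda^{1})$ decays exponentially at the periodic end and satisfies $d^{+}a=d^{*_{L^{2}_{\delta}}}a=0$; via Taubes' Fourier--Laplace transform this reduces to acyclicity of the AHS complex $\hat{\mathbb D}(z)$ on $X$ along the line $\mathrm{Re}\,z=\delta$, which holds for all sufficiently small $\delta>0$ by \cref{allops} together with $H^{1}(M;\Z)=0$ (\cref{b1condition}) and the fact that $X$ is a rational homology $S^{1}\times S^{3}$. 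For the cokernel, I will identify it via the $L^{2}_{-\delta}$-pairing with the kernel of the formal adjoint on the oppositely weighted space, and run an excision argument in the weighted setting: since $b^{+}(X)=0$, decaying harmonic self-dual $2$-forms on the periodic end $W[0,\infty]$ vanish (again by the Fourier--Laplace picture together with the PSC Weitzenb\"ock on the cover of $X$), so harmonic representatives on $Z$ are forced onto the compact piece $M$ and yield an isomorphism with $H^{+}(M)\cong\R^{b^{+}(M)}$.

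The main technical obstacle is the cokernel calculation: one must carry out a careful excision/gluing argument on the infinite periodic end, combined with the vanishing of decaying harmonic self-dual forms on $W[0,\infty]$ coming from $b^{+}(X)=0$ and the PSC hypothesis. I would pattern this on Taubes' end-periodic index theorem~\cite{T87} and the cohomology computations used in \cite{MRS11,L16}, choosing $\delta_{1}\le\delta_{0}$ so small that all of the relevant FL-transformed complexes on $X$ remain acyclic, which explains the appearance of the threshold $\delta_{1}$ in the statement.
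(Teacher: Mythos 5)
Your proposal follows essentially the same route as the paper: split $(d\nu+d^{*_{L^{2}_{\delta}}})_{(A_{0},0)}$ into the AHS-type operator on forms plus the Dirac operator on spinors, read off the representation types ($\tilR$ on the form part, $\quat$ on the spinor part via the quaternionic structure), and feed in Taubes' end-periodic Fredholm theory. Two sub-steps are handled differently, and in both cases the paper's version is simpler and yours is slightly off-target. First, for the Dirac index you propose to ``account for the weight correction arising from crossing the critical line $\mathrm{Re}\,z=0$''; but the entire point of the PSC hypothesis here is that there is no crossing: by the Weitzenb\"ock formula (item (2) of \cite{RS07}) the operator $D^{+}_{A_{0}}:L^{2}_{k,\delta}\to L^{2}_{k-1,\delta}$ is Fredholm for \emph{every} $\delta$, including $\delta=0$, so the weighted index equals the unweighted index with no correction term at all. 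If you literally computed a wall-crossing contribution you would either find it is zero or make an error; the paper simply invokes Fredholmness on the whole interval. Second, for the vanishing of decaying harmonic self-dual forms on the end you invoke ``the PSC Weitzenb\"ock''; positive scalar curvature plays no role in the de Rham part. The acyclicity of the FL-transformed AHS complex is purely topological (Lemma~3.2 of \cite{T87}, used in \cref{allops}), and the identification of the kernel with $\R^{b_{1}(M)}$ (zero by \cref{b1condition}) and the cokernel with $\R^{b^{+}(M)}$ is exactly Proposition~5.1 of \cite{T87}, which the paper cites rather than re-proves; your sketched duality-plus-excision argument is a reasonable way to reconstruct that proposition but is not needed. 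With these two corrections your argument closes up and agrees with the paper's.
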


\begin{proof}
It is easy to show that the operator \eqref{reducible operator} is the direct sum of 
\begin{align}\label{op1}
d^{+}+ d^{*_{L^2_\delta}} :L^2_{k,\delta}(  i \Lambda^1 )\to L^2_{k-1,\delta}(  i \Lambda^+ \oplus  i\Lambda^0)
\end{align}
and 
\begin{align}\label{op2}
D^+_{A_0}: L^2_{k,\delta}( S^+)\to L^2_{k-1,\delta}( S^-).
\end{align}
Taubes (Proposition 5.1 in \cite{T87}) showed that  
the kernel of \eqref{op1} is isomorphic to $\R^{b_1(M)}$ and  the cokernel of \eqref{op2} is isomorphic to $\R^{b^+(M)}$ for small $\delta$.
On the other hand, since $g_X$ is a PSC metric, the operator
\[
D^+(Z,g_X,0):L^2_{k, \delta} (S^+) \to L^2_{k-1, \delta}(S^-)
\]
is Fredholm for any $\delta$. (Use (2) in \cite{RS07}).
This implies that 
\[
\ind_{\C} (D_{A_0}:L^2_k(Z) \to L^2_{k-1}(Z))=\ind_{\C} (D_{A_0}:L^2_{k,\delta}(Z) \to L^2_{k-1,\delta}(Z))
\]
for any $\delta$.
Therefore, using \cref{b1condition}, we can see that
\[
\Ker  (d \nu+ d^{*_{L^2_\delta}})_{(A_0,0)} \cong  \quat^{l_1}
\]
and
\[
\Coker  (d \nu+ d^{*_{L^2_\delta}})_{(A_0,0)} \cong \quat^{l_0} \oplus \R^{b^+(M)}
\]
as vector spaces for some $l_{0}, l_{1}$ with $l_1-l_0 =2 \ind_{\C} (D_{A_0}:L^2_k(Z) \to L^2_{k-1}(Z)$).
Since $d^{+}+ d^{*_{L^2_\delta}} $ is a $Pin(2)$-equivariant linear map, its kernel and cokernel have structures of $Pin(2)$-modules and these representations are the direct sum of  $\quat$ and $\tilR$.
\end{proof}

\begin{prop} \label{reducible}Suppose that $b^+(M)=0$.
For each $\delta \in (0, \delta_1)$ where $\delta_1$ is the positive constant in \cref{formal dimension}, the space 
$\M_{k,\delta,h}(Z) \setminus \M^*_{k,\delta,h}(Z)$ contains just one point for any $h$.
 \end{prop}
\begin{proof} 
Since the map $L^2_{k,\delta }( \Lambda^1 (Z)) \xrightarrow{d^+} L^2_{k,\delta }( \Lambda^+ (Z))$ is surjective due to the calculation of Taubes (Proposition 5.1 in \cite{T87}) and the condition $b^+(M)=0$, we have a solution $A_0$ to the equation 
\[
F^+(A_0)= h .
\]
We have the corresponding element $(A_0, 0)$ in the configuration space and this gives the existence of reducible solution.

Suppose we have elements $[(A_i,0)] \in \M_{k,\delta}(Z) \setminus \M^*_{k,\delta}(Z)$ for $i=0$ and $i=1$. 
The connections $A_0$ and $A_1$  determine elements in $H^1(d, L^2_{k,\delta} )$, where the group $H^1(d, L^2_{k,\delta} )$  is the first cohomology of the following complex: 
\[
0 \to L^2_{k+1,\delta }( \Lambda^0 (Z))  \xrightarrow{d} L^2_{k,\delta }( \Lambda^1 (Z)) \xrightarrow{d^+} L^2_{k,\delta }( \Lambda^+ (Z))\to 0 .
\]
The result of Taubes(Proposition 5.1 in \cite{T87}) implies that $H^1(d, L^2_{k,\delta}) =0$ if $\delta \in (0, \delta_1)$ . Therefore the classes satisfy  $[A_0 ] = [A_1] $.
 Then we have a $L^2_{k+1,\delta}$-function $w$ such that 
 \[
-i A_0 = dw -i  A_1
 \]
 If we put $g := i e^w$ , then we have $A_1= g^*A_0$.  This gives the conclusion.

\end{proof}

\section{Proof of \cref{main}}
In this section, we give the proof of \cref{main}.
We first consider a combination of the Kuranishi model and some $Pin(2)$-equivariant perturbation, obtained by using some arguments of Y.~Ruan's virtual neighborhood technique~\cite{Ru98}.
Using it, we shall show a divisibility theorem of the Euler class following Y.~Kametani~\cite{Ka18}.
This argument produces the 10/8-type inequality on periodic-end spin $4$-manifolds.

\subsection{Perturbation}

Let $(X,\fraks)$ be the an oriented spin rational homology $S^{1} \times S^{3}$ and $Z$ be the periodic-end $4$-manifold given at the beginning of \cref{section Preliminaries}.
Suppose that $X$ admits a PSC metric $g_X$.
Then \cref{fred}, \cref{rem: psc is admissible}, and \cref{allops} imply that the Dirac operator over $Z$ is a Fredholm operator for the pull-back metric and a suitable weight $\delta$.

We confirm that what is called the global slice theorem holds also for our situation.
Henceforth we use this notation $d^{\ast}$ for the formal adjoint of $d$ with respect to the $L^{2}_{\delta}$-norm if no confusion can arise.
Let us define
\[
\scrS_{k,\delta} := \Ker(d^{\ast} : L^{2}_{k,\delta}(i\Lambda^{1}(Z)) \to L^{2}_{k-1,\delta}(i\Lambda^{0}(Z))) \times L^{2}_{k,\delta}(S^{+}).
\]

\begin{lem}
\label{lem: global slice}
The map
\[
\scrS_{k,\delta} \times \widetilde{\gauge}(Z)_{k+1,\delta} \to \conf_{k,\delta}(Z)
\]
defined by 
\[
((a, \Phi),g) \mapsto g^{\ast}(A_{0}+a, \Phi)
\]
is a $\widetilde{\gauge}(Z)_{k+1,\delta}$-equivariant diffeomorphism.
In particular, we have
\[
\scrS_{k,\delta} \cong \widetilde{\quot}_{k,\delta}(Z).
\]
\end{lem}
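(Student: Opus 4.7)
I would prove this by the standard Coulomb gauge-fixing construction, using \cref{lem: Lambda one decomposition} as the main analytic input and exhibiting the inverse explicitly.

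I would begin by constructing the inverse map. Given $(A, \Phi) \in \conf_{k,\delta}(Z)$, I write $A = A_{0} + \alpha$ with $\alpha \in L^{2}_{k,\delta}(i\Lambda^{1}(Z))$ and apply the $L^{2}_{\delta}$-orthogonal decomposition of \cref{lem: Lambda one decomposition} to obtain a unique splitting $\alpha = a - df$ with $d^{\ast}a = 0$ and $f \in L^{2}_{k+1,\delta}(i\Lambda^{0}(Z))$. I then set $g := \exp(f)$, viewed as a continuous map $Z \to S^{1}$, and define the inverse by $(A, \Phi) \mapsto ((a, g^{-1}\Phi), g)$. A direct computation verifies that applying the original map to this triple recovers $(A, \Phi)$.

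Next, I would verify that $g$ really lies in $\widetilde{\gauge}(Z)_{k+1,\delta}$. Since $k \geq 3$, weighted Sobolev embedding shows that $f$ is continuous on $Z$ and decays at infinity, so $g(T^{n}(x_{0})) \to 1$, giving $L_{x_{0}}(g) = 1$. The identity $dg = g \cdot df$ together with $|g| = 1$ and $df \in L^{2}_{k,\delta}$ yields $dg \in L^{2}_{k,\delta}$ via Sobolev multiplication in the range $k \geq 3$. For injectivity, suppose two triples $((a_{i}, \Phi_{i}), g_{i})$ ($i=1,2$) have the same image; then $h := g_{2}g_{1}^{-1} \in \widetilde{\gauge}(Z)_{k+1,\delta}$ satisfies $h^{\ast}(A_{0}+a_{1}) = A_{0}+a_{2}$, i.e.\ $a_{2} - a_{1} = h^{-1}dh$. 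I would argue that $h$ admits a global logarithm $\tilde{f} \in L^{2}_{k+1,\delta}(i\Lambda^{0}(Z))$, so $h^{-1}dh = d\tilde{f}$ lies in $\im d$. The $L^{2}_{\delta}$-orthogonality of \cref{lem: Lambda one decomposition} then forces $a_{1} = a_{2}$ and $d\tilde{f} = 0$; the based condition together with connectedness of $Z$ gives $\tilde{f} = 0$, hence $h = 1$ and the two triples agree.

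Smoothness of the map and of its inverse then follows from smoothness of the linear Hodge projection, of Sobolev multiplication (valid for $k \geq 3$), and of the exponential map in the Banach--Lie-group setting; the $\widetilde{\gauge}$-equivariance is immediate from the formula, and the final identification $\scrS_{k,\delta} \cong \widetilde{\quot}_{k,\delta}(Z)$ is then automatic. The main obstacle I anticipate is the global-logarithm claim used in the injectivity step: one must show that every $h \in \widetilde{\gauge}(Z)_{k+1,\delta}$ is of the form $\exp(\tilde{f})$ for some $\tilde{f} \in L^{2}_{k+1,\delta}$. Topologically this amounts to showing that the homotopy class of $h \colon Z \to S^{1}$ is trivial; this should follow from a Mayer--Vietoris computation using $Z = M \cup_{Y} W[0,\infty]$ together with \cref{b1condition} and the periodic structure of the end. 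Analytically, one can instead construct $\tilde{f}$ by integrating $h^{-1}dh$ along paths from $x_{0}$, using $L_{x_{0}}(h) = 1$ and the decay of $dh$ at infinity to control the accumulated winding on each slab $W_{n}$.
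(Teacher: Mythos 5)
Your proposal is correct and follows essentially the same route as the paper: both reduce the statement to the Coulomb slice via the $L^{2}_{\delta}$-orthogonal decomposition of \cref{lem: Lambda one decomposition}, produce the gauge transformation as an exponential of the primitive of the $\im d$-component, and prove uniqueness by showing that a based gauge transformation with $g^{-1}dg \in \Ker d^{\ast}$ must be trivial, using $b_{1}(Z)=0$ and decay at infinity (the global-logarithm/decay point you flag is exactly what the paper handles by invoking $b_1(Z)=0$ and the argument after Lemma~5.2 of Taubes). The only quibble is a normalization: with the paper's convention $g^{\ast}(A)=A-2g^{-1}dg$, your splitting should read $\alpha = a - 2df$ (or take $g=e^{f/2}$).
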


\begin{proof}
The assertion on $\widetilde{\gauge}(Z)_{k+1,\delta}$-equivariance is obvious.
To prove that the map given in the statement is a diffeomorphism, it suffices to show that the map
\[
\vp : \Ker(d^{\ast} : L^{2}_{k,\delta}(i\Lambda^{1}(Z)) \to L^{2}_{k-1,\delta}(i\Lambda^{0}(Z))) \times \widetilde{\gauge}(Z)_{k+1,\delta} \to L^{2}_{k,\delta}(i\Lambda^{1}(Z))
\]
defined by $(a,g) \mapsto a-2g^{-1}dg$ is a diffeomorphism.
Henceforth we simply denote by $\Ker{d^{\ast}}$ the first factor of the domain of this map if there is no risk of confusion.

We first show that $\vp$ is surjective.
Take any $a \in \Ker{d^{\ast}}$.
Thanks to the $L^{2}_{\delta}$-orthogonal decomposition given in \cref{lem: Lambda one decomposition},
we can find $f \in L^{2}_{k+1,\delta}(i\Lambda^{0}(Z))$ such that $-2df = a-p(a)$, where $p$ is the $L^{2}_{\delta}$-orthogonal projection to $\Ker{d^{\ast}}$ from $L^{2}_{k,\delta}(i\Lambda^{1}(Z))$.
Set $g := e^{f}$.
Since $f$ decays at infinity, $g \in \widetilde{\gauge}(Z)_{k+1,\delta}$ holds, and we get $\vp(p(a),g) = a$.

We next show that $\vp$ is injective.
Assume that $\vp(a,g) = \vp(a',g')$ holds for $(a,g), (a',g') \in \Ker{d^{\ast}} \times \widetilde{\gauge}(Z)_{k+1,\delta}$.
Then we have $a-a' - 2(gg')^{-1}d(gg')=0$.
Therefore, to prove that $\vp$ is injective, it suffices to show that, for 
$(a,g) \in \Ker{d^{\ast}} \times \widetilde{\gauge}(Z)_{k+1,\delta}$, if $\vp(a,g)=0$ holds we have $a=0$ and $g=1$.
Assume that $\vp(a,g)=0$.
Then, since $a \in \Ker{d^{\ast}}$, we have $d^{\ast}(g^{-1}dg)=0$.
On the other hand, $d(g^{-1}dg)=0$ also holds, and thus we can use the elliptic regularity.
Therefore $g^{-1}dg$ has the regularity of $C^{\infty}$.
Since $b_{1}(Z)=0$, there exists a function $h \in C^{\infty}(i\Lambda^{0}(Z))$ such that $dh = g^{-1}dg$.
By the argument after Lemma~5.2 of Taubes~\cite{T87}, we can take $h$ to be $L^{2}_{\delta}$.
Since $g^{-1}dg \in L^{2}_{k,\delta}$ holds, we finally get $h \in L^{2}_{k+1,\delta}$.
Since $h$ decays at infinity, one can integrate by parts: $0 = (d^{\ast}dh, h)_{L^{2}_{\delta}} = \|dh\|_{L^{2}_{\delta}}^{2}$, and hence $dh=0$.
Therefore $h$ is constant, and moreover, $h$ is constantly zero again because of the decay of $h$.
Thus we get $g^{-1}dg=0$, and hence $g$ is constant and $a=0$.
Since $\lim_{n \to \infty}g(T^{n}(x_{0}))=1$, we finally have $g=1$.
This completes the proof.
\end{proof}

Using \cref{lem: global slice} and restricting the map $\nu : \conf_{k,\delta}(Z) \to L^2_{k-1,\delta}(\Lambda^+(Z) \oplus S^-)$ corresponding to the \SW equations to the global slice, we get a map from $\scrS_{k,\delta}$, denoted by $\mu$:
\begin{align} \label{definition of mu}
\mu : \scrS_{k,\delta} \to L^2_{k-1,\delta}(\Lambda^+(Z) \oplus S^-).
\end{align}
This is a $Pin(2)$-equivariant non-linear Fredholm map.

\begin{rem}\label{invariant norm}
Note that, although the $L^2_{\delta}$-norm is $Pin(2)$-invariant, the $L^2_{k,\delta}$-norm is not $Pin(2)$-invariant in general.
However, by considering the average with respect to the $Pin(2)$-action, one can find a $Pin(2)$-invariant norm which is equivalent to the usual $L^{2}_{k,\delta}$-norm induced by the periodic metric and periodic connection.
Henceforth we fix this $Pin(2)$-invariant norm, and just call it a $Pin(2)$-invariant $L^{2}_{k,\delta}$-norm and denote it by $\|\cdot\|_{L^{2}_{k,\delta}}$.
\end{rem}

Via the isomorphism given in \cref{lem: global slice}, the quotient $\mu^{-1}(0)/S^{1}$ can be identified with the moduli space $\M_{k,\delta}(Z)$, and thus we get the following result by using the technique in Lin~\cite{L16}.

\begin{prop}
\label{cpt} There exists $\delta_2> 0$ satisfying the following condition.
For any $\delta \in (0,\delta_2)$, the space $\mu^{-1}(0)/S^{1}$ is compact and the space $\widetilde{\M}_{k,\delta,h}(Z)$ is also compact.
\end{prop}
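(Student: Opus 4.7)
The plan is to follow the strategy of Lin~\cite{L16}: establish an a priori pointwise bound on the spinor via the Weitzenb\"ock formula together with the positive scalar curvature assumption on the end, prove uniform exponential decay of solutions along the periodic end, and combine these with standard compactness on the compact piece to deduce compactness of $\mu^{-1}(0)/S^{1}$ in the weighted topology.

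First I would treat the spinor. For any $(a,\Phi) \in \mu^{-1}(0)$, corresponding to a solution $(A_0+a, \Phi)$ of the \SW equations on $Z$, the identities $F^+_A = \sigma(\Phi,\Phi)$ and $D_A \Phi = 0$ combined with the Weitzenb\"ock formula $D^*_A D_A = \nabla^*_A\nabla_A + s/4 + F^+_A\cdot$ give the pointwise inequality $\Delta |\Phi|^2 + 2|\nabla_A \Phi|^2 + s|\Phi|^2/2 + |\Phi|^4/2 \leq 0$ on $Z$. On the periodic end $W[0,\infty]$ the scalar curvature $s$ is bounded below by a positive constant $s_0 > 0$ since $g_X$ is PSC on the closed manifold $X$, so the maximum principle applied to $|\Phi|^2$ forces a uniform bound $|\Phi| \leq C$ on $Z$, where $C$ depends only on $s_0$ and on $\sup_{M}|\Phi|$, which is in turn controlled by elliptic estimates on the compact piece $M$ together with the PSC bound at the boundary $Y$.

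Next, and this is the heart of the argument, I would establish uniform exponential decay on the end. Because $X$ admits an admissible PSC metric, \cref{allops} furnishes a uniform spectral gap for the twisted Dirac operator $D^+_{A_0} + f^{\ast}d\theta$ on $X$, and PSC combined with Weitzenb\"ock shows that the only \SW solution on $X$ is the reducible $(A_0, 0)$. Translating this spectral information back to $Z$ via the Fourier--Laplace transform of Taubes, one obtains a constant $\eta > 0$ such that every solution $(a,\Phi)$ with $|\Phi|\leq C$ satisfies
\[
\|(a,\Phi)\|_{L^2(W[n,n+1])} \leq K e^{-\eta n}
\]
for all $n \geq 0$, with $K$ independent of the solution. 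The mechanism is the derivation of a differential inequality of the form $-\partial_t^2 E(t) + \eta^2 E(t) \leq 0$ for the energy density on slices and integration, after Coulomb gauge-fixing of $a$ on the cylindrical model, along the lines of Sections~3--4 of \cite{L16}.

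Finally, set $\delta_2 := \min\{\delta_0, \delta_1, \eta\}$, where $\delta_0$ and $\delta_1$ are the weights fixed in \cref{rem deltazero} and \cref{formal dimension}. For any $\delta \in (0, \delta_2)$, the exponential decay implies that every solution lies in $L^2_{k,\delta}$ with norm bounded uniformly, since the decay rate beats the weight $\delta$. On any fixed compact region $W[0,N] \cup M$, classical Uhlenbeck gauge fixing together with the $L^\infty$-bound on $\Phi$ and elliptic bootstrapping yields $C^\infty$-convergence of a subsequence modulo gauge. The uniform tail estimate upgrades this to convergence in $L^2_{k,\delta}$, giving compactness of $\M_{k,\delta}(Z) \cong \mu^{-1}(0)/S^{1}$. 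The main technical obstacle is the uniform exponential decay: the rate $\eta$ must be chosen independently of the solution, which requires carefully combining the pointwise bound on $\Phi$ with the nondegeneracy of the reducible on the end, and this is precisely where admissibility of the PSC metric and the Fourier--Laplace picture on $X$ become indispensable.
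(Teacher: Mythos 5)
Your proposal is correct and follows essentially the same route as the paper: both reduce to Lin's compactness strategy on the periodic end together with Kronheimer--Mrowka compactness on the compact piece, followed by patching of gauge transformations. The only difference is one of packaging: the paper obtains the uniform a priori bound from the gauge-invariant topological energy and the identity $\mathcal{E}^{\mathrm{an}} = \mathcal{E}^{\mathrm{top}} + \|\nu(A,\Phi)\|_{L^{2}}$, and then cites Lin's Theorem~4.5 wholesale for convergence on the end, whereas you sketch the proof of that cited theorem (Weitzenb\"ock formula, maximum principle, and exponential decay via the spectral gap at the reducible) from scratch.
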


\begin{proof}
By using the identification between $\mu^{-1}(0)/S^{1}$ and $\M_{k,\delta}(Z)$, it is sufficient to show $\M_{k,\delta}(Z)$ is compact. The proof of the second claim is similar to the first one. 
 Let $\{[(A_n, \Phi_n)]\}  \subset \M_{k,\delta}(Z)$ be any sequence in $\M_{k,\delta}(Z)$. Since $(A_n, \Phi_n)$ converges $(A_0,0)$ on the end for each $n$, the topological energy
\[
\mathcal{E}^{\text{top}}(A_n, \Phi_n):= \frac{1}{4}\int_{Z} F_{A^t_n} \wedge F_{A^t_n}
\]
defined in the book of Kronheimer--Mrowka~\cite{KM07} has a uniform bound 
\[
\mathcal{E}^{\text{top}}(A_n, \Phi_n) \leq C.
\]
We set $W(\epsilon, \infty]:= W[0,\infty] \setminus Y \times [0,\epsilon]$ where $Y \times [0,\epsilon]$ is a closed color neighborhood of $Y \times 0 \subset W[0,\infty]$.
The uniform boundedness of $\mathcal{E}^{\text{top}}(A_n, \Phi_n)$ and Theorem~4.7 in \cite{L16 } imply that $\{[(A_n,\Phi_n)|_{W[\epsilon, \infty]}]\}_{n}$ has a convergent subsequence in the $L^2_{k,\delta}$-topology.
(In Theorem~4.7 in \cite{L16}, Lin imposed the boundedness of $\Lambda_q$.
This is because Lin considered the blown-up moduli space.
On the other hand, for the convergence in the un-blown-up moduli space, we only need the boundedness of the energy.)
Therefore we have gauge transformations $\{g_n\}$ over $W(\epsilon, \infty]$ such that $\{(g_n^*A_n, g_n^*\Phi_n)\}$ converges in $L^2_{k,\delta}(W(\epsilon, \infty]; i\Lambda^1 \oplus S^+)$. On the other hand, we also have an energy bound 
\[
\mathcal{E}^{\text{top}}((A_n,\Phi_n)|_{M\cup_Y W_0}) \leq C .
\]
By Theorem 5.1.1 in \cite{KM07}, we have gauge transformation $h_n$ on $M\cup_Y Y \times [0,\epsilon']$ such that $\{(h_n^*A_n,h_n^*\Phi_n)\}$ has a convergent subsequence in $L^2_{k}(M\cup_Y Y \times [0,\epsilon'];i\Lambda^1 \oplus S^+)$ for $\epsilon <\epsilon'$.
Pasting $g_n$ and $h_n$ via a bump-function, we get gauge transformations $\{g_n\# h_n\}$ defined on the whole of $Z$ satisfying that 
$\{(g_n\# h_n^*A_n, g_n\# h_n \Phi_n )\}$ has convergent subsequence in $L^2_{k,\delta}(Z; i\Lambda^1 \oplus S^+)$. (This is a standard patching argument for gauge transformations. For example, see Sub-subsection~4.4.2 in \cite{DK90}.)
\end{proof}

Set
\[
\scrH_{k-1,\delta} := L^2_{k-1,\delta}(\Lambda^+(Z)) \times L^2_{k-1,\delta}(S^{-}).
\]
For a positive real number $\eta$, we define
\[
\mathbb{B}(\eta):=  \set {x \in  \scrS_{k,\delta} |  \| x \|^2_{L^2_{k,\delta}} <\eta }
\]
Since our $L^2_{k,\delta}$-norm is $Pin(2)$-invariant (see \cref{invariant norm}), $Pin(2)$ acts  on $\scrS_{k,\delta}\setminus \mathbb{B}(\eta)$.
Therefore the space $\scrS_{k,\delta}\setminus \mathbb{B}(\eta)$ has a structure of $Pin(2)$-Hilbert manifold with boundary.
Here let us recall that we introduced positive numbers $\delta_{1}$ and $\delta_{2}$ in \cref{formal dimension} and in \cref{cpt} respectively.
The following \lcnamecref{per} ensures that we can take a suitable and controllable perturbation of the \SW equations outside a neighborhood of the reducible.

\begin{lem}
\label{per}
For any $\delta >0 $ with $0< \delta < \min (\delta_1, \delta_2) $, $\eta>0$ and $\epsilon>0$, 
there exists a $Pin(2)$-equivariant smooth map
\[
g_\epsilon : \scrS_{k,\delta}\setminus \mathbb{B}(\eta) \to \scrH_{k-1,\delta}
\]
satisfying the following  conditions: 
\begin{enumerate} 
 \item For every point $\gamma \in  (\mu|_{\scrS_{k,\delta}\setminus \mathbb{B}(\eta)}+g_\epsilon)^{-1}(0)$, the differential
 \[
d(\mu+g_\epsilon)_{\gamma} :  \scrS_{k,\delta}    \to \scrH_{k-1,\delta}
 \]
 is surjective.

 \item Any element of the image of $g_\epsilon$ is smooth.
 \item There exists $N>0$, depending on $\epsilon$, such that 
 \[
 g_\epsilon(x)|_{W[N,\infty]}=0
 \]
 holds for any $x \in \scrS_{k,\delta}\setminus \mathbb{B}(\eta)$.
 \item There exists a constant $C>0$ independent of $\epsilon$ such that
 \[
 \|(dg_\epsilon)_{x}: \scrS_{k,\delta} \to \scrH_{k-1,\delta}\|_{\mathcal{B}(\scrS_{k,\delta}, \scrH_{k-1,\delta})}< C\epsilon
 \]
holds for any $x \in \scrS_{k,\delta}\setminus \mathbb{B}(\eta)$.
Here $\|\cdot\|_{\mathcal{B}(\cdot, \cdot)}$ denotes the operator norm.
 \item There exists a constant $C'>0$ independent of $\epsilon$ such that 
 \[
\|g_\epsilon(x)\|_{L^2_{k-1,\delta}} \leq C'\epsilon.
\]
holds for any $x \in \scrS_{k,\delta}\setminus \mathbb{B}(\eta)$.
 \end{enumerate} 
\end{lem}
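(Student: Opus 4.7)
The plan is to adapt the standard $Pin(2)$-equivariant virtual neighborhood technique to our periodic-end setting: use compactness of the unperturbed moduli space away from the reducible to reduce to a finite equivariant partition of unity, then build an $\epsilon$-small perturbation whose components span the cokernels of $d\mu$ at all nearby points. First I observe that, by \cref{cpt}, the moduli space $\calM_{k,\delta}(Z) = \mu^{-1}(0)/S^{1}$ is compact for any $\delta \in (0,\delta_{2})$; since $Pin(2)/S^{1} = \Z/2$ is finite, the subset $K := \mu^{-1}(0) \cap (\scrS_{k,\delta}\setminus \mathbb{B}(\eta))$ is also compact modulo $Pin(2)$. This reduces all forthcoming constructions to a finite local argument.

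For each $\gamma \in K$, the operator $d\mu_{\gamma}$ is Fredholm by \cref{allops} and \cref{formal dimension}, so $\Coker d\mu_{\gamma}$ is finite-dimensional. Since smooth sections of $\Lambda^{+}(Z)\oplus S^{-}$ compactly supported in some $W[-\infty, N_{\gamma}]$ are dense in $\scrH_{k-1,\delta}$, I can choose such sections $h_{1}^{\gamma},\ldots, h_{m_{\gamma}}^{\gamma}$ whose classes span $\Coker d\mu_{\gamma}$. After enlarging by $Pin(2)$-translates (finitely many coset representatives suffice by compactness of $Pin(2)$), I may assume that their $\R$-span $V_{\gamma}$ is a $Pin(2)$-invariant subspace of $\scrH_{k-1,\delta}$ still surjecting onto $\Coker d\mu_{\gamma}$. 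By continuity of $\gamma \mapsto d\mu_{\gamma}$ and openness of surjectivity for Fredholm operators, the identity $\im d\mu_{x} + V_{\gamma} = \scrH_{k-1,\delta}$ persists on a $Pin(2)$-invariant open neighborhood $U_{\gamma}$ of the orbit $Pin(2)\cdot\gamma$.

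Covering $K$ by finitely many such $U_{\gamma_{1}},\ldots, U_{\gamma_{r}}$, set $N := \max_{i} N_{\gamma_{i}}$ and choose a smooth $Pin(2)$-invariant partition of unity $\{\chi_{i}\}$ subordinate to $\{U_{\gamma_{i}}\}$, together with a $Pin(2)$-invariant bump $\chi$ equal to $1$ on $K$ and supported in $\bigcup_{i} U_{\gamma_{i}}\subset\scrS_{k,\delta}\setminus\mathbb{B}(\eta)$. I equivariantly extend each $h_{j}^{\gamma_{i}}$ to a smooth $Pin(2)$-equivariant map $\tilde h_{j}^{\gamma_{i}} : U_{\gamma_{i}} \to \scrH_{k-1,\delta}$ via a local slice to the $Pin(2)$-action, and consider the finite-parameter family
\[
g_{s}(x) := \chi(x) \sum_{i,j} s_{ij}\, \chi_{i}(x)\, \tilde h_{j}^{\gamma_{i}}(x), \qquad s = (s_{ij}) \in \R^{M}.
\]
By construction, the universal map $(x,s)\mapsto \mu(x) + g_{s}(x)$ is transverse to $0$: at any zero $(x,s)$ with $|s|$ small, $x$ lies in some $U_{\gamma_{i}}$, and varying $s$ supplies the directions $\chi_{i}(x)\tilde h_{j}^{\gamma_{i}}(x)$, which complete $\im d\mu_{x}$ to all of $\scrH_{k-1,\delta}$. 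Sard's theorem on the finite-dimensional parameter space $\R^{M}$ then yields regular values $s^{*}$ of arbitrarily small norm; setting $g_\epsilon := g_{s^{*}}$ with $|s^{*}|$ rescaled below $\epsilon/C_{0}$, where $C_{0}$ uniformly bounds the relevant operator norms of $\chi_{i}\tilde h_{j}^{\gamma_{i}}$ and their derivatives, delivers all five conditions: Condition~1 is transversality; Condition~2 follows from smoothness of the $\tilde h_{j}^{\gamma_{i}}$; Condition~3 follows from $\supp \tilde h_{j}^{\gamma_{i}} \subset W[-\infty, N]$; and Conditions~4 and~5 are automatic from the rescaling.

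The main obstacle is reconciling $Pin(2)$-equivariance with compact support on the periodic end at points of $K$ where the $Pin(2)$-stabilizer is nontrivial, namely along the reducible locus $\{\Phi=0\}$ in $K$. Such points force $V_{\gamma}$ to lie in the corresponding fixed subspace of $\scrH_{k-1,\delta}$; however, since $d\mu$ is itself $Pin(2)$-equivariant it restricts to the fixed subspaces, and averaging the initial sections $h_{j}^{\gamma}$ over the stabilizer projects them into the correct subrepresentation without losing surjectivity onto $\Coker d\mu_{\gamma}$, so the slice-based extension remains consistent.
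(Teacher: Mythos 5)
Your argument is essentially the paper's own: both run Ruan's virtual-neighbourhood construction --- finite-rank stabilizations spanning $\Coker d\mu_{\gamma}$ by smooth, compactly supported sections over the compact zero set, a partition of unity, Sard's theorem on the finite-dimensional parameter space, and an overall factor of $\epsilon$ to force the last two bullet points --- the only difference being that the paper descends to the quotient by $Pin(2)$ while you work equivariantly upstairs via local slices. Two remarks, neither fatal: finitely many $Pin(2)$-translates of a vector do not in general span a $Pin(2)$-invariant subspace (but you never need $V_{\gamma}$ itself to be invariant, since your slice-based equivariant extension together with the equivariance of $d\mu$ propagates surjectivity along each orbit), and your closing paragraph addresses a vacuous case, because the only reducible zero of $\mu$ in the slice is the origin (as $H^{1}(M;\Z)=0$ forces $\Ker(d^{*}\oplus d^{+})=0$), which lies in $\mathbb{B}(\eta)$, so $K$ consists entirely of free orbits.
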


\begin{proof}
Since $\scrS_{k,\delta}\setminus \mathbb{B}(\eta)$ has a free $Pin(2)$-action, we have the smooth Hilbert bundle
\[
\mathcal{E}:= (\scrS_{k,\delta}\setminus \mathbb{B}(\eta)) \times_{Pin(2)} \scrH_{k-1,\delta}\to  (\scrS_{k,\delta}\setminus \mathbb{B}(\eta))/Pin(2).
\]
Sicne $\mu: \scrS_{k,\delta}\setminus \mathbb{B}(\eta) \to \scrH_{k-1,\delta}$ is a $Pin(2)$-equivariant map, 
$\mu$ determines a section $\mu' : (\scrS_{k,\delta}\setminus \mathbb{B}(\eta))/Pin(2)\to \mathcal{E}$.
The section $\mu'$ is a smooth Fredholm section and the set $\mu'^{-1}(0)$ is compact by \cref{cpt}.
Now we consider a construction used in Ruan's virtual neighborhood technique~\cite{Ru98}.
Let $\gamma \in \mu'^{-1}(0)$.
The differential 
\[
d\mu'_{\gamma} : T_\gamma (\scrS_{k,\delta}\setminus \mathbb{B}(\eta))/Pin(2)  \rightarrow \scrH_{k-1,\delta}
\]
is a linear Fredholm map, and therefore there exist a natural number $n_{\gamma}$ and a linear map $f_{\gamma} : \R^{n_{\gamma}} \to  \scrH_{k-1,\delta}$ such that
\[
d\mu'_{\gamma} + f_{\gamma}  : T_\gamma (\scrS_{k,\delta}\setminus \mathbb{B}(\eta))/Pin(2) \oplus \R^{n_{\gamma}}  \to \scrH_{k-1,\delta}
\]
is surjective.
Concretely, we can give the map $f_{\gamma}$ as follows.
Let $V_{\gamma}$ be a direct sum complement in $\scrH_{k-1,\delta}$ of the image of $d\mu'_{\gamma}$.
By taking a basis of $V_{\gamma}$, we get a linear embedding $f_{\gamma} : \R^{n_{\gamma}} \to V_{\gamma} \subset \scrH_{k-1,\delta}$, where $n_{\gamma}=\dim V_{\gamma}$.
We here show that, by replacing $f_{\gamma}$ appropriately, we can assume that any element of $\im{f_{\gamma}}$ is smooth and has compact support.
For each member of the fixed basis of $V_{\gamma}$, we can take a sequence of smooth and compactly supported sections of $\Lambda^+(Z) \oplus S^{-}$ which converses to the member in the $L^{2}_{k-1,\delta}$ sense.
Then we get a sequence of maps $\{f_{\gamma,l}\}_{l}$ approaching $f_{\gamma}$ through the same procedure of the construction of $f_{\gamma}$ above.
Since surjectivity is an open condition, for a sufficiently large $l$, by replacing $f_{\gamma,l}$ with $f_{\gamma}$ we can assume that any element of $\im{f_{\gamma}}$ is smooth and has compact support.

For each $\gamma \in \mu'^{-1}(0)$, since surjectivity is an open condition, there exists a small open neighborhood $U_{\gamma}$ of $\gamma$ in $(\scrS_{k,\delta}\setminus \mathbb{B}(\eta))/Pin(2)$ such that $d\mu'_{\gamma'} + f_{\gamma} $ is surjective for any $\gamma' \in U_{\gamma}$.
Since $\mu'^{-1}(0)$ is compact,
there exist finitely many points $\gamma_{1}, \ldots, \gamma_{p} \subset \mu'^{-1}(0)$ such that $\mu'^{-1}(0) \subset \bigcup_{i=1}^{p}U_{\gamma_{i}}$.
Here we regard $\bigcup_{i=1}^{p}U_{\gamma_{i}}$ as an open submanifold of the infinite-dimensional manifold $(\scrS_{k,\delta}\setminus \mathbb{B}(\eta))/Pin(2)$.
(For the existence of this partition of unity on the infinite-dimensional manifold, see, for example, Chapter II, Corollary~3.8 of \cite{L95}.)
Set 
\[
n_{i} := n_{\gamma_{i}}, \ f_{i} := f_{\gamma_{i}},\  U_{i} := U_{\gamma_{i}}, \text{ and } n := \sum_{i=1}^{p}n_{i}.
\]
We fix a smooth partition of unity $\{\rho_{i} : U_{i} \to [0,1]\}_{i}$ subordinate to the open covering $\{U_{i}\}_{i=1}^{p}$ of $\bigcup_{i=1}^{p}U_{i}$.
Note that, until this point, we have not used $\epsilon$.
We here define a section
\[
\bar{g}_\epsilon : (\scrS_{k,\delta}\setminus \mathbb{B}(\eta))/Pin(2)  \times \R^{n} \to \mathcal{E}
\]
by
\begin{align} \label{epsilon}
\bar{g}_\epsilon(x,v)  :=\epsilon  \sum_{i=1}^{p} \rho_{i}(x) f_{i}(v_{i}),
\end{align}
where
\begin{align*}
&(x,v)=(x,(v_{1}, \ldots, v_{p}))\\
\in &(\scrS_{k,\delta}\setminus \mathbb{B}(\eta))/Pin(2)  \times \R^{n} = (\scrS_{k,\delta}\setminus \mathbb{B}(\eta))/Pin(2)  \times \R^{n_{1}} \times \cdots \times \R^{n_{p}}.
\end{align*}
One can easily check that, for any $\gamma \in \mu'^{-1}(0)$, the differential $d(\mu'+ \bar{g}_\epsilon)_{(\gamma,0)}$ is surjective.
Since any element of the image of $f_{i}$'s are smooth and has compact support, any element of the image of $\bar{g}_\epsilon$ is and does also.
Note that $\bar{g}_\epsilon(\gamma,0)=0$ holds for any $\gamma \in \mu'^{-1}(0)$.
Since surjectivity is an open condition, there exists an open neighborhood $\mathcal{N}$ of $\mu'^{-1}(0)$ in $(\scrS_{k,\delta}\setminus \mathbb{B}(\eta))/Pin(2)  \times \R^{n}$ such that, for any point $z \in \mathcal{N}$, the linear map $d(\mu'+\bar{g}_\epsilon)_z$ is surjective.
Because of the implicit function theorem, we can see that the subset
\[
\mathcal{U}:= \Set{ (x,v) \in   \mathcal{N} |( \mu + \bar{g}_\epsilon) (x,v) = 0   }
\]
of $\calN$, called a virtual neighborhood, has a structure of a finite dimensional manifold.
By Sard's theorem, the set of regular values of the map ${\rm pr}: \mathcal{U} \to \R^{n}$ defined as the restriction of the projection map
\[
{\rm pr}: (\scrS_{k,\delta}\setminus \mathbb{B}(\eta))/Pin(2)  \times \R^{n} \to \R^{n}
\]
is a dense subset of $\R^{n}$. Now we choose a regular value $v\in \R^n$ with the sufficiently small norm such that
\[
\set {x \in (\scrS_{k,\delta}\setminus \mathbb{B}(\eta))/Pin(2)| (\mu+ \bar{g}_\epsilon)(x,v)=0} \times \{v\} \subset  \mathcal{N}.
\]
Define
\[
g'_\epsilon : (\scrS_{k,\delta}\setminus \mathbb{B}(\eta))/Pin(2)\to \mathcal{E}
\]
by $g'_\epsilon(x) := \bar{g}_\epsilon(x,v)$.
Then we get a $Pin(2)$-equivariant map
\[
g_\epsilon: \scrS_{k,\delta}\setminus \mathbb{B}(\eta) \to \scrH_{k-1,\delta} 
\]
 by considering the pull-back of $g'_\epsilon$ by the quotient maps 
 \begin{align*}
  \begin{CD}
  (\scrS_{k,\delta}\setminus \mathbb{B}(\eta)) \times\scrH_{k-1,\delta}  @>{}>>  \mathcal{E}=(\scrS_{k,\delta}\setminus \mathbb{B}(\eta)) \times_{Pin(2)}  \scrH_{k-1,\delta}   \\
  @V{}VV    @VVV \\
\scrS_{k,\delta}\setminus \mathbb{B}(\eta) @>>>   (\scrS_{k,\delta}\setminus \mathbb{B}(\eta))/Pin(2)
  \end{CD}
\end{align*}
and composing the projection $(\scrS_{k,\delta}\setminus \mathbb{B}(\eta)) \times \scrH_{k-1,\delta} \to \scrH_{k-1,\delta}$.
The surjectivity of $d(\mu'+\bar{g}_\epsilon)$ ensures that this $g_{\epsilon}$ satisfies the first required condition in the statement of the \lcnamecref{per}. Since any element of the image of $\bar{g}_\epsilon$ is smooth and has compact support, the map $g_\epsilon$ satisfies the same condition.
This implies that $g_{\epsilon}$ meets the second and third conditions in the statement.
The fourth and fifth conditions follow from the expression \eqref{epsilon}.
\end{proof}

\subsection{Kuranishi model}
To obtain the $10/8$-inequality, we study a neighborhood of the reducible configuration and use the $Pin(2)$-equivariant  Kuranishi model for this.
We first recall the following well-known theorem.
(For example, see Theorem~A.4.3 in \cite{MS12}.)

\begin{theo}[Kuranishi model]
\label{Kuranishi}
Let $G$ be a compact Lie group and $V$ and $V'$ be Hilbert spaces equipped with smooth $G$-actions and $G$-invariant inner products.
Suppose that there exists a $G$-equivariant smooth map $f : V \to V'$ with $f(0)=0$ and such that 
$df_0:  V \to V'$ is Fredholm.
Then the following statements hold.
\begin{enumerate}
\item There exists a $G$-invariant open subset $U \subset V$ and a $G$-equivariant diffeomorphism $T: U \to T(U) $ satisfying the following conditions:
\begin{itemize} 
\item $T(0)=0$.
\item There exist a $G$-equivariant linear isomorphism $D: (\Ker df_0)^{\perp}\to  \im df_0$ and a smooth $G$-equivariant map $\tilde{f}: V \to (\im df_0)^{\perp} $ such that the map
\[
f\circ T: U \to T(U) \subset V \to V'
\]
is written as $f\circ T(v,w) = (Dw, \tilde{f}(v,w)) \in \im df_0\oplus (\im df_0)^{\perp} = V'$ for $(v,w) \in \Ker df_0\oplus (\Ker df_0)^{\perp}=V$.
\item If we define $F: \Ker df_0 \cap U\to (\im df_0)^{\perp}$ by $F(v):= \tilde{f}(v,0)$, then $F^{-1}{(0)}$ can be identified with $U\cap f^{-1}(0)$ as $G$-spaces.
\end{itemize}
\item For a real number $c$ satisfying $0 < c \leq 1/2$, let $U_c(f)$ be the open set in $V$ defined by
\begin{align}
\label{Kuranishi open}
U_c(f) = \set{ x \in V | \| {\rm pr}_{(\Ker df_0)^{\perp}}- D^{-1}\circ {\rm pr}_{\im df_0} \circ df_x \|_{\mathcal{B}(V, V)}<c },
\end{align}
where  ${\rm pr}_{W}$ is the projection to a subspace $W$ and $\| - \|_{\mathcal{B}(V, V)}$ is the operator norm.
Let $\Psi : V \to V$ be the map defined by
\[
\Psi(x) := x + D^{-1}\circ {\rm pr}_{\im df_0}(f(x) - df_{0}(x)).
\]
Then, the image $\Psi(U_{c}(f))$ is an open subset of $V$ and the restriction $\Psi|_{U_{c}(f)} :U_{c}(f) \to \Psi(U_{c}(f))$ is a diffeomorphism.
\item
As the open set $U$ in (1), we can take any open ball centered at the origin and contained in $\Psi(U_{c}(f))$.
\end{enumerate}
\end{theo}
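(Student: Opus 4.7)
The plan is to run the standard Kuranishi argument, with the single nonstandard point being that everything must be performed $G$-equivariantly; since $G$ acts by linear isometries, the orthogonal decompositions $V=\Ker df_{0}\oplus(\Ker df_{0})^{\perp}$ and $V'=\im df_{0}\oplus(\im df_{0})^{\perp}$ are $G$-invariant and the Fredholm operator $D:=df_{0}|_{(\Ker df_{0})^{\perp}}:(\Ker df_{0})^{\perp}\to\im df_{0}$ is a $G$-equivariant isomorphism. I will prove parts (2), (1), (3) in this order, since part (1) is obtained by choosing an open ball in the set furnished by (2).

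First I would study the map
\[
\Psi(x) := x + D^{-1}\circ\mathrm{pr}_{\im df_{0}}(f(x)-df_{0}(x)).
\]
A direct calculation, using $D^{-1}\circ\mathrm{pr}_{\im df_{0}}\circ df_{0}=\mathrm{pr}_{(\Ker df_{0})^{\perp}}$, gives
\[
d\Psi_{x}-\mathrm{id}_{V} = -\bigl(\mathrm{pr}_{(\Ker df_{0})^{\perp}}-D^{-1}\circ\mathrm{pr}_{\im df_{0}}\circ df_{x}\bigr).
\]
Hence for $x\in U_{c}(f)$ one has $\|d\Psi_{x}-\mathrm{id}\|<c\le 1/2$, so $d\Psi_{x}$ is invertible there. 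Injectivity of $\Psi|_{U_{c}(f)}$ follows from the standard contraction estimate: for $x_{0},x_{1}\in U_{c}(f)$, write $\Psi(x_{1})-\Psi(x_{0})=(x_{1}-x_{0})+\int_{0}^{1}(d\Psi_{x_{t}}-\mathrm{id})(x_{1}-x_{0})\,dt$ with $x_{t}=(1-t)x_{0}+tx_{1}$, and bound the integral by $c\|x_{1}-x_{0}\|\le\tfrac{1}{2}\|x_{1}-x_{0}\|$, provided $U_{c}(f)$ is taken small enough to be convex (which one can always arrange by shrinking). That $\Psi(U_{c}(f))$ is open and $\Psi|_{U_{c}(f)}$ is a diffeomorphism onto it then follows from the inverse function theorem applied pointwise, together with injectivity. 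The $G$-equivariance of $\Psi$ is immediate from the equivariance of $D$, $\mathrm{pr}_{\im df_{0}}$, $f$ and $df_{0}$. This settles part (2).

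For part (1), set $T:=\Psi^{-1}$ and, for $y=(v,w)\in\Ker df_{0}\oplus(\Ker df_{0})^{\perp}$ inside any $G$-invariant open ball at the origin contained in $\Psi(U_{c}(f))$, decompose $f\circ T(y)$ according to $V'=\im df_{0}\oplus(\im df_{0})^{\perp}$. By construction of $\Psi$ one has $\mathrm{pr}_{(\Ker df_{0})^{\perp}}\Psi(x)=D^{-1}\mathrm{pr}_{\im df_{0}}f(x)$, equivalently $\mathrm{pr}_{\im df_{0}}(f\circ T)(v,w)=Dw$. Defining
\[
\tilde f(v,w) := \mathrm{pr}_{(\im df_{0})^{\perp}}(f\circ T)(v,w),
\]
we therefore obtain $f\circ T(v,w)=(Dw,\tilde f(v,w))$, and $\tilde f$ is $G$-equivariant because $T$ and $\mathrm{pr}_{(\im df_{0})^{\perp}}$ are. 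Since $D$ is an isomorphism, $f\circ T(v,w)=0$ forces $w=0$, and then the equation reduces to $F(v):=\tilde f(v,0)=0$; this gives the required $G$-equivariant identification of $U\cap F^{-1}(0)$ with $U\cap(f\circ T)^{-1}(0)$, which in turn corresponds via $T$ to $T(U)\cap f^{-1}(0)$. Part (3) is built into this construction: $U$ may be chosen to be any $G$-invariant open ball around the origin contained in $\Psi(U_{c}(f))$, and $G$-invariance of such balls is automatic from the $G$-invariance of the inner product on $V$.

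The only genuinely delicate point is the diffeomorphism assertion in part (2): one must be sure that the local invertibility given by $\|d\Psi_{x}-\mathrm{id}\|<1/2$ on $U_{c}(f)$ can be upgraded to a global (on $U_{c}(f)$) diffeomorphism. I would therefore be careful to arrange $U_{c}(f)$ to be taken as (or shrunk to) a convex $G$-invariant neighborhood, so that the straight-line path $x_{t}$ used above lies inside $U_{c}(f)$; this is harmless for the final application, since part (1) only needs a neighborhood of the origin and part (3) only needs $\Psi(U_{c}(f))$ to contain a ball about $0$, which it does because $\Psi(0)=0$ and $\Psi$ is an open map on $U_{c}(f)$.
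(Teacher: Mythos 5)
The paper itself offers no proof of this statement---it is quoted from Theorem~A.4.3 of \cite{MS12}---so there is no internal argument to compare against; your proposal is the standard Kuranishi construction and its computations are correct. In particular the identity $d\Psi_{x}-\mathrm{id}=-(\mathrm{pr}_{(\Ker df_{0})^{\perp}}-D^{-1}\circ\mathrm{pr}_{\im df_{0}}\circ df_{x})$, the resulting bound $\|d\Psi_{x}-\mathrm{id}\|<c\le 1/2$ on $U_{c}(f)$, the relation $\mathrm{pr}_{(\Ker df_{0})^{\perp}}\Psi(x)=D^{-1}\mathrm{pr}_{\im df_{0}}f(x)$ that produces the normal form $f\circ T(v,w)=(Dw,\tilde f(v,w))$, and the equivariance of every object (which uses only that $G$ acts by linear isometries, so that $\Ker df_{0}$, $\im df_{0}$, their orthogonal complements, $D$, the projections, $\Psi$, $U_{c}(f)$ and balls about the origin are all preserved) all check out. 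Note that closedness of $\im df_{0}$, needed for $\mathrm{pr}_{\im df_{0}}$ to exist, comes from Fredholmness of $df_{0}$, and that $\tilde f$ is a priori only defined on $\Psi(U_{c}(f))$ rather than on all of $V$; one extends it $G$-equivariantly if a globally defined map is insisted upon, which is immaterial since only its restriction to $U$ is ever used.

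The one point you flag---that the integral estimate proving injectivity of $\Psi$ requires the segment from $x_{0}$ to $x_{1}$ to stay inside $U_{c}(f)$---is a genuine issue with the statement rather than with your proof. The set $U_{c}(f)$ as defined in \eqref{Kuranishi open} need not be convex or even connected, and already for $V=V'=\R$ one can arrange a smooth $f$ with $f'=1$ on two far-apart intervals on which $f$ takes a common value, so that $\Psi=f$ fails to be injective on $U_{1/2}(f)$; hence part (2) as literally written should be read as asserting a diffeomorphism on a convex (e.g.\ ball) $G$-invariant subneighborhood of $0$ in $U_{c}(f)$, exactly as you propose. This restriction is consistent with how the theorem is actually invoked later: in the proof of \cref{main construction} the estimate $\|\mathrm{id}-d(\Psi_{\mu_{\epsilon}})_{x}\|<c'$ is only ever applied on the balls $\mathbb{B}(4\eta)\subset U_{c}(\mu)$ together with Lemma~A.3.2 of \cite{MS12}, and parts (1) and (3) only require a ball about the origin contained in $\Psi(U_{c}(f))$. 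So your caveat does not weaken anything the paper needs.
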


\subsection{Spin $\Gamma$-structure on the \SW moduli space}
\label{subsection Spin Gamma structure of the SW moduli space}

In this subsection, we show that there is a natural spin $\Gamma$-structure (equivariant spin structure) on $\widetilde{\M }_{k,\delta}(Z)$. 
To show this, we need several definitions related to a $Pin(2)$-equivariant version of the index of a family of Fredholm operators.
A non-equivariant version of the argument of this \lcnamecref{subsection Spin Gamma structure of the SW moduli space} was originally considered by H.~Sasahira~\cite{MR2284407}.

Let $G$ be a compact Lie group.
\begin{defi} Let $H_1$ and $H_2$ be separable Hilbert spaces with linear $G$-actions. 
Let $\text{Fred}(H_1,H_2)$ be the set of Fredholm operators from $H_1$ to $H_2$. We define a topology on $\text{Fred}(H_1,H_2)$ by the operator norm and an action of $G$ on $\text{Fred}(H_1,H_2)$ by $f \mapsto g^{-1} f g$ , where $f \in \text{Fred}(H_1,H_2)$ and $g \in G$.
\end{defi}
As in the non-equivariant case, for a compact $G$-space $K$, there is a map: 
\[
\ind_K: [K, \text{Fred}(H_1,H_2)]_G \to KO_G(K)
\]
via the equivariant families index, where $[K, \text{Fred}(H_1,H_2)]_G$ is the set of $G$-homotopy classes of $G$-maps from $K$ to $\text{Fred}(H_1,H_2)$.

In this subsection, for fixed $\eta>0$ and $\epsilon>0$, we choose a perturbation $g_\epsilon$ as in \cref{per}.
We also fix a $Pin(2)$-equivariant cut-off function
 $\rho: \scrS_{k,\delta}\to [0,1]$ satisfying
 \begin{align}
 \label{def of rho}
 \rho(x) = 
 \begin{cases} 
 0, \ \text{if } x \in \mathbb{B}(\eta),\\ 
 1 , \ \text{if } x \in \mathbb{B}(2\eta)^c,
 \end{cases}
 \end{align}
 where $\mathbb{B}(2\eta)^c$ is the complement of $\mathbb{B}(2\eta)$ in $\scrS_{k,\delta}$.
(To construct such a function, we use a map induced by the square of the $L^2_{k,\delta}$-norm on $\scrS_{k,\delta}$.)
 We have the $Pin(2)$-equivariant smooth map 
\[
 g_\epsilon :  \scrS_{k,\delta}\setminus \mathbb{B}(\eta)  \to \scrH_{k-1,\delta}
 \]
given in \cref{per}.
We now consider the following map 
\[
\mu_\epsilon:= \mu+ \rho  g_\epsilon: \scrS_{k,\delta}  \to \scrH_{k-1,\delta}.
\]
In our situation, we put $H_1=\scrS_{k,\delta}$, $H_2=\scrH_{k-1,\delta}$, $G=Pin(2)$ and let $K$ be an $G$-invariant topological subspace of $H_1$.
The Fredholm maps $d(\mu_\epsilon)_x : H_1 \to H_2$ for $x \in K$ determine a class
\[
[ \{d(\mu_\epsilon)_x\}_{x\in K}] \in [K, \text{Fred}(H_1,H_2)]_{Pin(2)}.
\]
If $K=H_1$, then we have an isomorphism
\[
[H_{1}, \text{Fred}(H_1,H_2)]_{Pin(2)} \cong [\{0\}, \text{Fred}(H_1,H_2)]_{Pin(2)}
\]
via a $Pin(2)$-equivariant deformation retraction from $H_{1}$ to $\{0\}$.
This isomorphism implies that
\begin{align}
\label{eq: for cal of fam ind}
[ \{d(\mu_\epsilon)_x\}_{x \in H_{1}}]
=[ \{d\mu_0\}_{x \in H_{1}}]
\end{align}
in $[H_{1}, \text{Fred}(H_1,H_2)]_{Pin(2)}$ since $d(\mu_\epsilon)_0 = d\mu_{0}$.

It follows from \cref{perturbation cpt}, shown in the next subsection, that $\mu_\epsilon^{-1}(0)\setminus \mathbb{B}(\eta)$ is compact for an appropriate choice of $\delta$.
In the rest of this subsection, we shall use this comactness.
For a given $Pin(2)$-space $K$ and a $Pin(2)$-module $W$,  we denote by $\underline{W}$ the product $Pin(2)$-bundle on $K$ with fiber $W$.

\begin{lem} \label{KOclass}
The class $[T(\mu_\epsilon^{-1}(0)\setminus \mathbb{B}(2\eta))] \in KO_{Pin(2)}(\mu^{-1}(0)\setminus \mathbb{B}(2\eta))$ is equal to $[\underline{\Ker d \mu_0}]- [\underline{\Coker d\mu_0}]$.
\end{lem}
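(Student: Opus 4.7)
The plan is to reinterpret the tangent bundle of the perturbed moduli space as the $KO_{Pin(2)}$-theoretic family index of the linearization $\{d(\mu_\epsilon)_x\}$, and then to exploit the $Pin(2)$-equivariant contractibility of the ambient slice $\scrS_{k,\delta}$ to collapse this family index to the single Fredholm operator $d\mu_{0}$ sitting at the origin.

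First I would verify that $K := \mu_\epsilon^{-1}(0)\setminus \mathbb{B}(2\eta)$ is a smooth $Pin(2)$-submanifold of $\scrS_{k,\delta}$ whose tangent bundle is $x\mapsto \Ker d(\mu_\epsilon)_x$. On this set $\rho\equiv 1$, so $\mu_\epsilon = \mu + g_\epsilon$, and the first bullet of \cref{per} supplies surjectivity of $d(\mu_\epsilon)_x$ at every $x\in K$; combined with the freeness of the $Pin(2)$-action away from the reducible, the implicit function theorem produces the desired smooth structure. Because the cokernel bundle of the family $\{d(\mu_\epsilon)_x\}_{x\in K}$ is identically zero, its family index in $KO_{Pin(2)}(K)$ coincides with $[TK]$.

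Next I would extend the family from $K$ to all of $\scrS_{k,\delta}$ by the $Pin(2)$-equivariant map
\[
F:\scrS_{k,\delta}\to \mathrm{Fred}(\scrS_{k,\delta},\scrH_{k-1,\delta}),\qquad F(x):=d(\mu_\epsilon)_x.
\]
The slice $\scrS_{k,\delta}$ is a linear $Pin(2)$-representation, hence $Pin(2)$-equivariantly contractible through the linear homotopy $(t,x)\mapsto tx$. Composing $F$ with this contraction yields a $Pin(2)$-equivariant homotopy in $\mathrm{Fred}(\scrS_{k,\delta},\scrH_{k-1,\delta})$ from $F$ to the constant family with value $F(0)$. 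Since $\rho$ vanishes identically on $\mathbb{B}(\eta)$, the product $\rho g_\epsilon$ extends smoothly by zero across the origin and has vanishing derivative there, so $F(0) = d(\mu_\epsilon)_0 = d\mu_0$.

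By the $Pin(2)$-equivariant homotopy invariance of the family index, the index of $F$ over $\scrS_{k,\delta}$ coincides with the index of the constant family $\{d\mu_0\}$, which is the pullback of $[\Ker d\mu_0]-[\Coker d\mu_0]\in KO_{Pin(2)}(\mathrm{pt})$ along the projection to a point. Restricting to $K$ (identified with $\mu^{-1}(0)\setminus \mathbb{B}(2\eta)$ as indicated by the statement, through the natural $Pin(2)$-homotopy equivalence between unperturbed and perturbed moduli away from the reducible) converts this into $[\underline{\Ker d\mu_0}]-[\underline{\Coker d\mu_0}]$, as required. The main technical point to verify is that the composition with the linear contraction actually lands inside $\mathrm{Fred}(\scrS_{k,\delta},\scrH_{k-1,\delta})$, i.e.\ that $d(\mu_\epsilon)_{tx}$ remains Fredholm for all $t\in[0,1]$ and $x\in\scrS_{k,\delta}$; this follows since $d\mu_x-d\mu_0$ is a relatively compact perturbation of the elliptic operator $d\mu_0$ in the weighted Sobolev setting of \cref{delta0}, while $d(\rho g_\epsilon)_x$ has operator norm at most $O(\epsilon)$ by the fourth bullet of \cref{per}.
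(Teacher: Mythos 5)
Your proposal is correct and follows essentially the same route as the paper: identify $[TK]$ with the $Pin(2)$-equivariant family index of $\{d(\mu_\epsilon)_x\}_{x\in K}$, use the equivariant contractibility of the linear slice $\scrS_{k,\delta}$ to homotope the family over all of $\scrS_{k,\delta}$ to the constant family $\{d\mu_0\}$ (using $d(\mu_\epsilon)_0=d\mu_0$), and restrict to $K$. Your added care about Fredholmness along the contraction and about surjectivity of $d(\mu_\epsilon)_x$ on $K$ only makes explicit what the paper leaves implicit.
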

\begin{proof}
Set $K = \mu_{\epsilon}^{-1}(0)\setminus \mathbb{B}(\eta)$.
Then we have
\begin{align}
\label{eq: TK ind}
[TK] = \ind_{}[\{d(\mu_\epsilon)_x\}_{x\in K}] \in KO_{Pin(2)}(K).
\end{align}
Note that the inclusion $i: K \to  H_1$ induces the map 
\[
i^* : [H_1, \text{Fred}(H_1,H_2)]_{Pin(2)} \to [K,\text{Fred}(H_1,H_2)]_{Pin(2)}.
\]
Using the equality \eqref{eq: for cal of fam ind}, one can check 
\begin{align}
\label{eq: dmuzero and dmuepsilon}
[ \{d\mu_0\}_{x \in K}] = i^*[ \{d\mu_0\}_{x \in H_{1}}]=i^* [\{d (\mu_\epsilon)_{x}\}_{x \in H_{1}}]=[\{d (\mu_\epsilon)_{x}\}_{x \in K}].
\end{align}
The index of the left-hand-side is given as
\begin{align}
\label{eq: ind of dmuzero}
\ind[ \{d\mu_0\}_{x \in K}]=[\underline{\Ker d \mu_0}]- [\underline{\Coker d\mu_0}].
\end{align}
The equalities \eqref{eq: TK ind}, \eqref{eq: dmuzero and dmuepsilon}, and \eqref{eq: ind of dmuzero} prove the lemma.
\end{proof}

\begin{cor}
\label{stably}
Under the same assumption of \cref{KOclass}, there exists a $Pin(2)$-module $W$ such that 
\[
T(\mu_\epsilon^{-1}(0)\setminus \mathbb{B}(\eta)) \oplus  \underline{\Coker d \mu_0} \oplus  \underline{W} \cong  \underline{\Ker d\mu_0}  \oplus  \underline{W}
\]
as $Pin(2)$-bundles. 
\end{cor}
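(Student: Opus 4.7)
The plan is to deduce \cref{stably} from \cref{KOclass} by passing to ordinary $KO$-theory of the quotient $K/Pin(2)$, where $K := \mu_\epsilon^{-1}(0)\setminus\mathbb{B}(\eta)$.

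First I would verify that $K$ is compact and that the $Pin(2)$-action on $K$ is free. The argument of \cref{cpt} carries over to the perturbed moduli space $\mu_\epsilon^{-1}(0)/Pin(2)$, since the perturbation $\rho g_\epsilon$ provided by \cref{per} is compactly supported in the end direction and uniformly small, so the energy estimate and gauge-patching go through. Freeness holds because $K$ is bounded a definite $L^{2}_{k,\delta}$-distance away from the reducible $(A_0,0)$, the only point with nontrivial stabilizer. Hence $K \to K/Pin(2)$ is a principal $Pin(2)$-bundle over a compact base, and $K$ itself is compact.

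Because $Pin(2)$ acts freely on $K$, the standard descent isomorphism
\[
KO_{Pin(2)}(K) \cong KO(K/Pin(2))
\]
holds, sending a $Pin(2)$-equivariant bundle $E$ to its quotient $E/Pin(2)$. Applying this to the identity furnished by \cref{KOclass} and invoking the ordinary $KO$-theoretic cancellation theorem on the compact space $K/Pin(2)$ yields an integer $N \ge 0$ together with an isomorphism of vector bundles on $K/Pin(2)$ after adding the trivial rank-$N$ bundle. Pulling this isomorphism back along $K \to K/Pin(2)$ produces the $Pin(2)$-equivariant isomorphism
\[
T(\mu_\epsilon^{-1}(0)\setminus\mathbb{B}(\eta)) \oplus \underline{\Coker d\mu_0} \oplus \underline{\R^{N}} \cong \underline{\Ker d\mu_0} \oplus \underline{\R^{N}},
\]
with $\R^{N}$ carrying the trivial $Pin(2)$-action; taking $W := \R^{N}$ completes the proof.

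The main subtlety I would be careful about is precisely that the pullback of the trivial rank-$N$ bundle on $K/Pin(2)$ is the $Pin(2)$-bundle $\underline{\R^{N}}$ with \emph{trivial} action on the fiber. This is exactly what guarantees the \lcnamecref{stably}'s requirement that $W$ be a $Pin(2)$-module with trivial action; a direct equivariant cancellation argument on $K$ would only produce a trivial $Pin(2)$-bundle whose fiber is some finite-dimensional $Pin(2)$-representation possibly carrying a nontrivial action, so the descent route through $K/Pin(2)$ is the right one here.
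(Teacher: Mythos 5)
Your proof is correct and is essentially the argument the paper leaves implicit: \cref{stably} is stated as an immediate consequence of \cref{KOclass} with no written proof, and the standard way to upgrade equality of classes in $KO_{Pin(2)}(K)$ to a stable isomorphism with a \emph{trivial} $Pin(2)$-module $W$ is exactly your route of descending to the compact free quotient $K/Pin(2)$, applying ordinary $KO$-theoretic cancellation there, and pulling back along $K \to K/Pin(2)$. Your closing observation --- that a direct equivariant cancellation on $K$ would only produce a $W$ carrying a possibly nontrivial $Pin(2)$-action, so the passage through the free quotient is what secures the ``trivial action'' clause --- is precisely the point that justifies the statement as written.
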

By the use of  \cref{stably}, we can equip the \SW moduli space with a structure of spin $\Gamma$-manifold.

\begin{cor} \label{spin G mfd str}
Under the same assumption of \cref{KOclass} and the condition $b^+(M)$ is even, 
$\mu_\epsilon^{-1}(0)\setminus \mathbb{B}(\eta)$ has a structure of a spin $\Gamma$-manifold. 
\end{cor}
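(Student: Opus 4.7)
The strategy is to combine \cref{stably}, which stably identifies $T(\mu_\epsilon^{-1}(0)\setminus \mathbb{B}(\eta))$ with an explicit virtual $Pin(2)$-representation, with the explicit real spin $\Gamma$-module structures furnished by \cref{spin G module H} and \cref{spin G module tilR}. First I would unpack \cref{stably} using \cref{formal dimension}. Since $\Ker d\mu_0 \cong \quat^{l_1}$ and $\Coker d\mu_0 \cong \quat^{l_0} \oplus \tilR^{b^+(M)}$ as $Pin(2)$-modules, pulling back along $\Gamma \to Pin(2)$ converts the isomorphism of \cref{stably} into the $\Gamma$-equivariant identification
\[
T(\mu_\epsilon^{-1}(0)\setminus \mathbb{B}(\eta)) \oplus \underline{\quat^{l_0}} \oplus \underline{\tilR^{b^+(M)}} \oplus \underline{W} \cong \underline{\quat^{l_1}} \oplus \underline{W}
\]
of $\Gamma$-bundles on $\mu_\epsilon^{-1}(0)\setminus \mathbb{B}(\eta)$.

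Next, I would equip each constant fibre with a real spin $\Gamma$-module structure. By \cref{spin G module H}, the factors $\quat^{l_0}$ and $\quat^{l_1}$ carry real spin $Pin(2)$-module structures, which pull back to real spin $\Gamma$-module structures along $\Gamma \to Pin(2)$. The evenness of $b^+(M)$ lets \cref{spin G module tilR} furnish a real spin $\Gamma$-module structure on $\tilR^{b^+(M)}$, and the trivial $Pin(2)$-module $W$ admits the evident constant lift to $Spin(W)$. Following the product construction in \cref{real spin module}, each of the trivial $\Gamma$-bundles appearing in the display is then promoted to a $\Gamma$-equivariant principal $Spin(n)$-bundle with the required compatibility, yielding canonical spin $\Gamma$-structures on those trivial bundles.

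Transferring the spin $\Gamma$-structure from $\underline{\quat^{l_1}} \oplus \underline{W}$ across the above isomorphism equips the Whitney sum $T(\mu_\epsilon^{-1}(0)\setminus \mathbb{B}(\eta)) \oplus \underline{\quat^{l_0}} \oplus \underline{\tilR^{b^+(M)}} \oplus \underline{W}$ with a spin $\Gamma$-structure. Cancelling the common trivial spin $\Gamma$-summand, the equivariant analogue of stabilisation by $Spin(n) \hookrightarrow Spin(n+k)$, then produces the desired spin $\Gamma$-structure on $T(\mu_\epsilon^{-1}(0)\setminus \mathbb{B}(\eta))$ itself.

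I expect the main delicacy to be making this final cancellation rigorous in the equivariant setting: one must verify that the $\Gamma$-lift arising from the product principal bundle on the right-hand side factors compatibly through a $\Gamma$-lift on the tangent bundle once the common trivial summand is stripped off. This is ensured by the fact that the $\Gamma$-module structures on $\quat^{l_0}$, $\tilR^{b^+(M)}$, and $W$ used to build the trivial factor are identical on both sides of the isomorphism, so the stabilisation cancels $\Gamma$-equivariantly.
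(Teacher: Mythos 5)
Your proposal is correct and follows essentially the same route as the paper: both start from \cref{stably}, use \cref{formal dimension}, \cref{spin G module H} and \cref{spin G module tilR} (together with the evenness of $b^+(M)$) to put real spin $\Gamma$-module structures on the stabilising summands, and then cancel the extra summands. The only divergence is in the final step: where you attempt the cancellation equivariantly upstairs and flag it as the main delicacy, the paper resolves exactly that point by descending to the free quotient $(\mu_\epsilon^{-1}(0)\setminus\mathbb{B}(\eta))/\Gamma$ via \cref{real spin module}, performing the standard non-equivariant two-out-of-three cancellation of spin structures there, and then pulling the resulting spin structure back to an equivariant one using \cref{equiv spin2}.
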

\begin{proof}
 By applying \cref{stably}, we have a $Pin(2)$-module $W$ satisfying 
\begin{align} \label{isom}
T(\mu_\epsilon^{-1}(0)\setminus \mathbb{B}(\eta)) \oplus  \underline{\Coker d \mu_0} \oplus  \underline{W} \cong   \underline{\Ker d \mu_0} \oplus \underline{W}.
\end{align}
By \cref{Pin(2)-module structure}, we can assume that $W$ has a real $Pin(2)$-module structure. 
We regard $Pin(2)$-modules as $\Gamma$-modules and $Pin(2)$-spaces as $\Gamma$-spaces via \eqref{definition of Gamma}. 
Since $b^+(M)$ is even, the dimensions of $\Coker d \mu_0$ and $\Ker d \mu_0$ are even. By \cref{spin G module tilR} and \cref{spin G module H}, $ \Coker d \mu_0$ and  $\Ker d \mu_0$ have spin $\Gamma$-module structures. 
The spin $\Gamma$-module structures on $ \Coker d \mu_0$, $\Ker d \mu_0$ and $W$ determine spin structures on  $\underline{\Coker d \mu_0}/\Gamma$, $\underline{\Ker d \mu_0}/\Gamma$ and $\underline{W}/\Gamma$ as vector bundles on $(\mu_\epsilon^{-1}(0)\setminus \mathbb{B}(\eta) )/\Gamma $ by \cref{real spin module}.

The isomorphism \eqref{isom} gives the isomorphism: 
\begin{align*}
T(\mu_\epsilon^{-1}(0)\setminus \mathbb{B}(\eta))/\Gamma \oplus  \underline{\Coker d \mu_0}/\Gamma
\oplus  \underline{W}/\Gamma 
 \cong   \underline{\Ker d \mu_0}/\Gamma \oplus \underline{W}/\Gamma.
\end{align*}
Since $ \underline{\Ker d \mu_0}/\Gamma \oplus \underline{W}/\Gamma$ has a spin structure induced by the spin structures on  $\underline{\Ker d \mu_0}/\Gamma$ and $\underline{W}/\Gamma$, $T(\mu_\epsilon^{-1}(0)\setminus \mathbb{B}(\eta))/\Gamma$ also admit a spin structure. By \cref{equiv spin2}, we obtain a structure of a spin $\Gamma$-manifold on $\mu_\epsilon^{-1}(0)\setminus \mathbb{B}(\eta)$.
\end{proof}
\begin{rem}\label{indep on W}
One can check that the spin $Pin(2)$-structure on $\mu_\epsilon^{-1}(0)\setminus \mathbb{B}(\eta)$ in  \cref{spin G mfd str} does not depend on the choice of $W$.
\end{rem}

\subsection{Main construction}

The following theorem contains the main construction of this paper.

\begin{theo}
\label{main construction} 
Under the assumption of \cref{main} and the condition that $b^+(M)$ is even, there exist real spin $\Gamma$-modules $U_0$ and $U_1$ with $\Gamma$-invariant norms and $\Gamma$-equivariant smooth map $\phi : S(U_0) \to U_1$ from the unit sphere of $U_{0}$ satisfying the following conditions:
\begin{itemize}
\item The group $\Gamma$ acts freely on $U_0 \setminus \{0\}$.
\item The map $\phi : S(U_0) \to U_1$ is transverse to $0 \in U_{1}$.
\item The $\Gamma$-manifold $\phi^{-1}(0)$ bounds a compact manifold with a free $\Gamma$-action, making it a spin $\Gamma$-manifold. 
\item As a $\Gamma$-representation space, $U_i$ is isomorphic to $\tilde{\R}^{l_i} \oplus \quat^{m_i}$ for $i=0$ and $1$, where $l_0=0,  \ l_1=b^+(M)$ and $2m_0-2m_1= - \ind_{\C} D_{A_0}$,
\end{itemize}
where the definition of $\Gamma$ is given in \eqref{definition of Gamma}.
\end{theo}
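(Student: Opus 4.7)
The plan is to construct $(U_0, U_1, \phi)$ from an equivariant Kuranishi model of the monopole map at the reducible, taking as the bounding spin $\Gamma$-manifold the complement of a small ball around the reducible in the compact perturbed moduli space.

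First, I would apply the Kuranishi model (\cref{Kuranishi}) with $G = Pin(2)$ to the map $\mu : \scrS_{k,\delta} \to \scrH_{k-1,\delta}$ at the origin, which corresponds to the reducible $(A_0, 0)$ under the global slice \cref{lem: global slice}. This produces a $Pin(2)$-invariant neighborhood of $0$, a $Pin(2)$-equivariant diffeomorphism $T$, and a $Pin(2)$-equivariant obstruction map $F : \Ker d\mu_0 \to \Coker d\mu_0$, defined on a small neighborhood of the origin, whose zero set is locally identified via $T$ with $\mu^{-1}(0)$. By \cref{formal dimension}, as $Pin(2)$-modules one has $\Ker d\mu_0 \cong \quat^{m_0}$ and $\Coker d\mu_0 \cong \quat^{m_1} \oplus \tilR^{b^+(M)}$, with the claimed index relation. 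Set $U_0 := \Ker d\mu_0$ and $U_1 := \Coker d\mu_0$, and equip them with real spin $\Gamma$-module structures via the surjection $\Gamma \to Pin(2)$ together with \cref{spin G module H,spin G module tilR}; the latter applies because $b^+(M)$ is even. Since the standard $Pin(2)$-action on $\quat$ is free off the origin, the $\Gamma$-action on $U_0 \setminus \{0\}$ is free.

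Next, I would produce $\phi$ from a small-sphere restriction of $F$. Fix the perturbation $g_\epsilon$ from \cref{per} with $\eta > 0$ chosen small enough that $\mathbb{B}(\eta)$ lies inside the Kuranishi neighborhood; since $\rho \equiv 0$ on $\mathbb{B}(\eta)$, the identity $\mu_\epsilon = \mu$ holds there, so the Kuranishi model also describes $\mu_\epsilon^{-1}(0)$ inside $\mathbb{B}(\eta)$. Pick $r > 0$ with $T(D_r(U_0)) \subset \mathbb{B}(\eta)$, set $\phi_0 := F|_{S_r(U_0)} : S_r(U_0) \to U_1$, and obtain $\phi$ by a $\Gamma$-equivariant transversality perturbation of $\phi_0$ (permitted by the free $\Gamma$-action on $S_r(U_0)$, analogous to the construction of $g_\epsilon$ in \cref{per}); after rescaling, identify $S_r(U_0)$ with $S(U_0)$. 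The perturbation is chosen so as to extend to a $\Gamma$-equivariant perturbation of $\mu_\epsilon$ supported in $\mathbb{B}(\eta)$, yielding transverse zeros near $S_r(U_0)$ while keeping the moduli compact by \cref{cpt}.

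Finally, the bounding spin $\Gamma$-manifold is $\mu_\epsilon^{-1}(0) \setminus \mathbb{B}(\eta)$, now incorporating the interior perturbation: by \cref{spin G mfd str} it is a compact spin $\Gamma$-manifold with free $\Gamma$-action, since the reducible is the only $S^1$-fixed configuration and is excised. Its boundary lies in $\partial \mathbb{B}(\eta)$ and, under $T$, is identified with the zero set of the perturbed $F$ on the sphere $S_r(U_0)$, that is, with $\phi^{-1}(0)$. The main obstacle is to coordinate the $\Gamma$-equivariant transversality perturbation of $\phi$ with the global perturbation $g_\epsilon$ so that the boundary of $\mu_\epsilon^{-1}(0) \setminus \mathbb{B}(\eta)$ coincides with $\phi^{-1}(0)$ as spin $\Gamma$-manifolds; this requires careful interpolation of perturbations across the cut-off annulus $\mathbb{B}(2\eta) \setminus \mathbb{B}(\eta)$ and possibly the stabilization adjustment of the spin $\Gamma$-structure from \cref{stably}.
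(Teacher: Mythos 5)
Your overall strategy---an equivariant Kuranishi model at the reducible supplying $U_0$, $U_1$ and $\phi$, with the perturbed moduli space away from the reducible serving as the null-cobordism---is exactly the paper's strategy. But the way you cut the moduli space and the way you achieve transversality of $\phi$ both contain genuine gaps, and the paper's proof is organized precisely to avoid them. First, $\mu_\epsilon^{-1}(0)\setminus\mathbb{B}(\eta)$ is not in general a manifold with boundary: the level set $\partial\mathbb{B}(\eta)$ has no reason to meet the zero set transversally. Worse, your $\phi$ is defined on a sphere $S_r(U_0)$ chosen so that $T(D_r(U_0))\subset\mathbb{B}(\eta)$, i.e.\ strictly inside the excised ball; so even if $\mu_\epsilon^{-1}(0)\setminus\mathbb{B}(\eta)$ were a manifold with boundary, its boundary would lie on $\partial\mathbb{B}(\eta)$ and could not be identified with $\phi^{-1}(0)\subset T(S_r(U_0))$. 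The paper resolves both points at once: on $\mathbb{B}(2\eta)^{c}$ the perturbed zero set is already a smooth manifold (surjectivity of $d\mu_\epsilon$ along the zero set, from \cref{per}), Sard's theorem applied to the radial function $\psi(x)=\|x\|_{L^{2}_{k,\delta}}^{2}$ on that manifold yields a regular level $s\in(2\eta,3\eta)$, the bounding manifold is taken to be $\psi^{-1}([s,\infty))$, and the norm on $U_0$ is rescaled by $1/\sqrt{s}$ so that $S(U_0)$ is exactly the sphere at that regular level. For this to make sense the Kuranishi chart of $\mu_\epsilon$ must cover $\mathbb{B}(3\eta)$, which is why the paper proves the inclusions $U_{c}(\mu)\subset U_{c'}(\mu_\epsilon)$ and $\mathbb{B}(3\eta)\subset\Psi_{\mu_\epsilon}(U_{c'}(\mu_\epsilon))$ by taking $\epsilon$ small relative to $\|D_0^{-1}\|$; your choice of a very small $r$ sidesteps these estimates but creates the boundary mismatch above.

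Second, in the paper the transversality of $\phi$ comes for free from the choice of $s$ as a regular value of $\psi$: no additional perturbation of the obstruction map inside the ball is needed, since $\mu_\epsilon=\mu$ there and the sphere at a regular radial level meets the (already transversally cut out) zero set transversally. Your extra interior perturbation of $F$ is the source of the ``main obstacle'' you flag yourself---matching it with $g_\epsilon$ across the annulus so that $\partial$ of the cobordism really is $\phi^{-1}(0)$ as a spin $\Gamma$-manifold, re-establishing compactness for the further-perturbed map (\cref{cpt} only covers $\mu$), and extending the perturbation equivariantly toward the origin, where the $\Gamma$-action is \emph{not} free so generic equivariant transversality is unavailable. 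None of these is resolved in your write-up. Dropping the interior perturbation entirely and replacing it with the Sard-on-the-radius argument is what closes the gap.
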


\begin{proof}
Let $X$, $Y$, $M$ and $Z$ be as in \cref{section Preliminaries}.
Suppose that $X$ admits a PSC metric $g_X$.
We fix a positive number $\delta$ satisfying $\delta < \min \{ \delta_0\ , \delta_1\ , \delta_2 \}$.
(Recall that $\delta_{0}$, $\delta_{1}$, and $\delta_{2}$ are given in \cref{rem deltazero}, \cref{formal dimension}, and \cref{cpt} respectively.)
We denote by $D_{0}$ the operator $d\mu_0$ and put $W_0=\Ker D_0$ and $W_1=\im D_0$. 
Since the operator $D_0 :W_0^{\perp} \to W_1$ is an isomorphism, there exists the inverse map $D_0^{-1}:W_1\to W_0^{\perp}$, where $\perp$ is the orthogonal complement with respect to $L^2_{k,\delta}$-norm in $\scrS_{k,\delta} $.
We use \cref{Kuranishi} for the following setting: 
\[
V=\scrS_{k,\delta} , \   V'= \scrH_{k-1,\delta} , \ G=Pin(2),  \text{ and } \phi = \mu.
\]
Then we get a Kuranishi model for $\mu$ near the reducible.
For this model, we use the open subset $U_c(\mu) \subset V$ for $c$ with $0 <c <  \min \{ \frac{1}{8}, \frac{1}{8} \|D_0^{-1} \|_{\mathcal{B}(W_{1},W_0^{\perp})}^{-1} \}$ defined as in \eqref{Kuranishi open}, where  $\|D_0^{-1} \|_{\mathcal{B}(W_1,W_0^{\perp})}$ is the operator norm of $D_0^{-1}$.
We fix a positive real number $\eta$ satisfying
\begin{align}
\label{choice of eta}
\mathbb{B}(4\eta ) \subset U_{c}(\mu) \subset \scrS_{k,\delta}
\end{align}
and also fix a $Pin(2)$-equivariant cut-off function
 $\rho: \scrS_{k,\delta}\to [0,1]$ as \eqref{def of rho}.
For any $\epsilon>0$, we have the $G$-equivariant smooth map 
\[
 g_\epsilon :  \scrS_{k,\delta}\setminus \mathbb{B}(\eta)  \to \scrH_{k-1,\delta}
 \]
given in \cref{per}, and can consider the map 
\[
\mu_\epsilon = \mu+ \rho  g_\epsilon: \scrS_{k,\delta}  \to \scrH_{k-1,\delta}
\]
as in \cref{subsection Spin Gamma structure of the SW moduli space}.
Note that the map $\mu_\epsilon$ is a smooth $Pin(2)$-equivariant Fredholm map.
Because of \cref{per}, the differential
$d(\mu_\epsilon)_x$ is surjective for any $x \in \mu_{\epsilon}^{-1}(0) \cap \mathbb{B}(2\eta)^c$, and therefore $\mu_{\epsilon}^{-1}(0) \cap \mathbb{B}(2\eta)^c$ is a finite dimensional manifold.
We also note that $\mu_{\epsilon} = \mu$ on $\mathbb{B}(\eta)$.
We define $\Psi_{\mu_{\epsilon}} : V \to V$ by
\begin{align*}
\Psi_{\mu_{\epsilon}}(x)
&= x + D^{-1}_{\epsilon} \circ {\rm pr}_{\im (d(\mu_{\epsilon})_{0})}(\mu_{\epsilon}(x) - d(\mu_{\epsilon})_{0}(x))\\
&= x + D^{-1}_{0} \circ {\rm pr}_{\im D_{0}}(\mu_{\epsilon}(x) - D_{0}(x)),
\end{align*}
where $D_{\epsilon} = d(\mu_{\epsilon})_{0}$, which is just $D_{0}$.
We now use the following \lcnamecref{perturbation cpt}:

\begin{lem}
\label{perturbation cpt}
For $\delta$  with $0<\delta < \delta_2$, the space $\mu_\epsilon^{-1}(0)/S^1$ is compact, where $\delta_2$ is the constant given in \cref{cpt}.
\end{lem}

The proof of this \lcnamecref{perturbation cpt} is given at the end of this subsection.
Assuming \cref{perturbation cpt}, then the space $\mu_\epsilon^{-1}(0)\setminus \mathbb{B}(\eta)$ is also compact.
Next, we consider a neighborhood of the reducible.
Applying \cref{Kuranishi} for $f= \mu_\epsilon$, we obtain a Kuranishi model for $\mu_\epsilon$ near the reducible.
Here we use the open subset $U_c(\mu_\epsilon)$ defined as in \eqref{Kuranishi open} for this Kuranishi model.
Fix a positive number $c'$ satisfying  $  c  <\frac{1}{2}c' < \frac{1}{8}$.
We here choose $\epsilon$ so that  
\[
 \| D_0^{-1}\|_{\mathcal{B}(W_1,W_0^{\perp})} ( C\max |d \rho | \epsilon + C' \epsilon )   < \frac{1}{2}c',
\]
where $C$ and $C'$ are the constants in \cref{per}.
Then 
\begin{align}
\label{inclusion rel Uc Ucp}
U_{c} (\mu ) \subset U_{c'} (\mu_\epsilon ) 
\end{align}
holds, because for $x \in U_{c} (\mu )$ we have 
\begin{align}
\begin{split}
\label{ineq ucmu}
&\|\text{pr}_{(\ker D_0)^{\perp}} -  D_0^{-1}\circ \text{pr}_{\im D_0}\circ d(\mu_\epsilon)_x \|_{ \mathcal{B}(V,V')}\\
\leq  &\|\text{pr}_{(\ker D_0)^{\perp}} - D_0^{-1}\circ \text{pr}_{\im D_0}\circ( d\mu_x+ d(\rho g_\epsilon)_x) \|_{ \mathcal{B}(V,V')}\\
\leq  &c+\|D_0^{-1}\circ \text{pr}_{\im D_0}\circ d(\rho g_\epsilon)_x) \|_{ \mathcal{B}(V,V')}\\
\leq  &c+\|D_0^{-1}\|_{\mathcal{B}(W_{1},W_{0}^{\perp})} ( C\max |d \rho | \epsilon + C' \epsilon )<c'.
\end{split}
\end{align}
Here we use the definition of $U_{c}(\mu)$ given in \cref{Kuranishi} in the second inequality and \cref{per} in the last inequality.

Next, we show
\begin{align}
\label{inclusion rel B3 Phi}
\mathbb{B}(3\eta) \subset \Psi_{\mu_{\epsilon}}(U_{c'} (\mu_\epsilon)).
\end{align}
We first note that the argument to get the inequality \eqref{ineq ucmu} also shows that $\|\id - d(\Phi_{\mu_{\epsilon}})_{x}\| < c'$ for any $x \in \mathbb{B}(4\eta)$.
Going back to a proof of the inverse function theorem, this inequality implies that $\mathbb{B}(4\eta(1-c')) \subset \Psi_{\mu_{\epsilon}}(\mathbb{B}(4\eta))$.
(For example, see Lemma~A.3.2 in \cite{MS12}.)
Using \eqref{choice of eta}, \eqref{inclusion rel Uc Ucp}, we get $\mathbb{B}(4\eta(1-c'))\subset  \Psi_{\mu_{\epsilon}}(U_{c'}(\mu_{\epsilon}))$.
Thus we have \eqref{inclusion rel B3 Phi}.

Now let us recall \cref{Kuranishi}.
Because of \eqref{inclusion rel B3 Phi}, \cref{Kuranishi} ensures that there exists a $Pin(2)$-equivariant diffeomorphism $T: \mathbb{B}(3\eta) \to T(\mathbb{B}(3\eta)) $ satisfying the following conditions: 
\begin{itemize} 
\item $T(0)=0$.
\item The map
\[
\mu_\epsilon \circ T: \mathbb{B}(3\eta) \to T(\mathbb{B}(3\eta)) \subset \scrS_{k,\delta}  \to \scrH_{k-1,\delta}
\]
is given by $(v,w) \mapsto (Dw, \tilde{\mu}_{\epsilon}(v,w))$
via the decompositions $\scrS_{k,\delta} = \Ker d(\mu_\epsilon)_0\oplus (\Ker d(\mu_\epsilon)_0)^{\perp}$ and $\scrH_{k-1,\delta} = \im d(\mu_\epsilon)_0\oplus (\im d(\mu_\epsilon)_0)^{\perp}$.
Here
\[
D: (\Ker d(\mu_\epsilon)_0)^{\perp}\to \im d(\mu_\epsilon)_0
\]
is a $Pin(2)$-equivariant linear isomorphism and
\[
\tilde{\mu}_\epsilon: \scrS_{k,\delta} \to (\im d(\mu_\epsilon)_0)^{\perp}
\]
is a smooth $Pin(2)$-equivariant map.
\item Define $\mu^*_\epsilon : \Ker d(\mu_\epsilon)_0 \cap \mathbb{B}(3\eta) \to (\im d(\mu_\epsilon)_0)^{\perp}$
by $\mu^*_\epsilon(v):=\tilde{\mu}_\epsilon(v,0)$.
Then $\mathbb{B}(3\eta) \cap (\mu^*_\epsilon)^{-1}{(0)}$ can be identified with $\mathbb{B}(3\eta) \cap (\mu_\epsilon)^{-1}(0)$ as $Pin(2)$-spaces. 
\end{itemize}
We denote the identification between $\mathbb{B}(3\eta) \cap (\mu^*_\epsilon)^{-1}{(0)}$ and $\mathbb{B}(3\eta) \cap (\mu_\epsilon)^{-1}(0)$ by $\Phi: \mathbb{B}(3\eta) \cap (\mu^*_\epsilon)^{-1}{(0)} \to \mathbb{B}(3\eta) \cap (\mu_\epsilon)^{-1}(0)$.
Since $\mu_\epsilon(x)= \mu(x)$ for $x \in \mathbb{B}(\eta)$,  
we can see that 
\begin{align}\label{dimension}
\Ker d(\mu_\epsilon)_0 \cong \Ker D_{A_0}, \ \Coker d(\mu_\epsilon)_0 \cong \Coker D_{A_0}\oplus \tilR^{b^+}
\end{align}
as $Pin(2)$-modules by \cref{formal dimension}.
We set 
\[
U_0 := \Ker d(\mu_\epsilon)_0 \text{ and } \ U_1 := \Coker d(\mu_\epsilon)_0.
\]
 By the construction of the Kuranishi model, the equalities $\Ker (d\mu_\epsilon)_x \cong \Ker (d\mu^*_\epsilon)_{\Phi (x)} $ and $\Coker (d\mu_\epsilon)_x \cong \Coker (d\mu^*_\epsilon)_{\Phi (x)} $ hold for any $x\in \mathbb{B}(3\eta) \cap (\mu_\epsilon)^{-1}(0)$. 
Therefore, there exists a positive real number $\upsilon$ such that $(\mu^*_\epsilon)^{-1}(0) \setminus \mathbb{B}(3\eta- \upsilon)$ has the structure of a $Pin(2)$ finite dimensional manifold.
Here let us consider the $Pin(2)$-invariant smooth map 
\[
\psi :(\mu^*_\epsilon)^{-1}(0) \setminus \mathbb{B}(3\eta- \upsilon)   \to (0, \infty)
\]
defined by $\psi (x) := \| x\|_{L^2_{k,\delta}}^2$.
Sard's theorem implies that there exists a dense subset $S$ in $(0, \infty)$ such that $s$ is a regular value of $\psi$ for any $s \in S$. Now we fix a regular value $s \in S$. 
Then the space $\psi^{-1}([s, \infty))   \cup_{ \Phi \circ T} (  \mu_\epsilon^{-1} (0) \setminus \mathbb{B}(3\eta- \upsilon)) $ has a structure of a compact $Pin(2)$-manifold with boundary. 
We equip $U_0$ with the norm defined by
\[
\|v\| := \frac{1}{\sqrt{s}} \|v\|_{L^2_{k,\delta}}
\]
and $U_1$ with that defined as the restriction of the $L^2_{k-1,\delta}$-norm. 

We set  $\phi := \mu^*_\epsilon|_{S (U_0)}$.
We regard $Pin(2)$-modules as $\Gamma$-modules and $Pin(2)$-spaces as $\Gamma$-spaces via \eqref{definition of Gamma}.
Now we check that the conclusions of \cref{main construction} are satisfied. 
Since $b^+(M)$ is even, $\dim U_0$ and $\dim U_1$ are also even. By  \cref{spin G module tilR} and \cref{spin G module H}, $U_0$ and $ U_1$ admit real spin $\Gamma $-module structures.
The map 
\begin{align}\label{phi def}
\phi : S (U_0) \to U_1
\end{align}
is transverse to $0$ because of the choice of $s$.  
This implies the second condition. 
Since the $Pin(2)$-action on $\Ker d(\mu_\epsilon)_0 =\Ker D_{A_0}$ by quaternionic multiplication, the first condition follows. 
By the use of \cref{spin G mfd str}, we can equip a structure of a spin $\Gamma$-manifold on $\psi^{-1}([s, \infty))   \cup_{ \Phi \circ T} (  \mu_\epsilon^{-1} (0) \setminus \mathbb{B}(3\eta- \upsilon)) $. On the other hand, the differential of \eqref{phi def} gives an isomorphism
\[
\Ker d\phi \oplus \underline{U_1} \oplus \underline{\R} \cong \underline{U_0}.
\]
We equip $\R$ with the trivial real spin $\Gamma$-module structure. The the vector bundles $\underline{U_1}/ \Gamma$,  $\underline{\R}/ \Gamma$ and $\underline{U_0}/ \Gamma$ on $\phi^{-1} (0)/ \Gamma $ has spin structures by \cref{real spin module}. Therefore, $\phi^{-1}(0)/ \Gamma$ also admit a spin structure. This induces a real spin $\Gamma$-manifold structure on $\phi^{-1}(0)$. Since the constructions are same, the structure of a spin $\Gamma$-manifold on $\phi^{-1}(0)$ coincide with that of  $\partial (\psi^{-1}([s, \infty))   \cup_{ \Phi \circ T} (  \mu_\epsilon^{-1} (0) \setminus \mathbb{B}(3\eta- \upsilon)))$. This implies the third condition. The isomorphism \eqref{dimension} implies the fourth condition.
\end{proof}

At the end of this subsection, we give the proof of \cref{perturbation cpt}.

\begin{proof}[Proof of \cref{perturbation cpt}]
The proof is similar to that of \cref{cpt}. Let $\{[(A_n,\Phi_n)]\}$ be a sequence in $\nu_\epsilon^{-1}(0)/S^1$.
 For all $n$, the pair $(A_n,\Phi_n)$ satisfies the equation
\[
(\nu+\rho g_\epsilon) (A_n,\Phi_n) = 0.
\]
Because of the property of $g_\epsilon$ in \cref{per}, we have the inequality 
\[
\|g_\epsilon (A_n,\Phi_n) \|_{L^2_{k-1,\delta}} <C 
\]
for $(A_n,\Phi_n) \in \scrS_{k,\delta}$. Therefore the topological energy (see the proof of \cref{cpt}) of $(A_n,\Phi_n)$ is bounded by some positive number (independent of $n$) as in the proof of \cref{cpt}. 
Moreover, there exists a positive integer $N \gg 0$ satisfying $g_\epsilon (A) |_{W[N,\infty]}=0$ for $A \in \scrS_{k,\delta}$.
Therefore $(A_n, \Phi_n)$ satisfies the usual \SW equation on $W[N+1,\infty]$ for all $n$. There exist a subsequence $\{(A_{n'},\Phi_{n'})\}$ of $\{(A_n,\Phi_n)\}$ and gauge transformations $g_n$ on $W[N+2,\infty]$ such that $\{g_n^*(A_{n'},\Phi_{n'})\}$ converges on $W[N+2,\infty]$ as in the argument in \cref{cpt}. We should show the existence of a subsequence $\{(A_{n''},\Phi_{n''})\}$ of $\{(A_{n'},\Phi_{n'})\}$ and gauge transformations $h_n$ on $M \cup_Y W[0,N+3]$ satisfying that $\{h_n^*(A_{n''},\Phi_{n''})\}$ converges on $ M\cup W[0,N+3]$. It can be proved by essentially the same way as in Theorem 5.1.1 of \cite{KM07}. The key point is the boundedness of the analytical energies of $(A_{n'},\Phi_{n'})$. Finally, we paste $g_n$ and $h_n$ by some bump-functions and get the conclusion.
\end{proof}

\subsection{Completion of the proof of \cref{main}}
\label{sebsection: Completion of the proof of Main Thm}
In this subsection, we complete the proof of \cref{main}.

\begin{proof}[Proof of \cref{main}]
Let $Y$, $X$, $M$ and $Z$ be as in \cref{section Preliminaries}.
The proof is given according to three cases regarding $b^{+}(M)$.

The first case is when $b^+(M)$ is positive and even.
By applying \cref{main construction},  there exist real spin $\Gamma$-modules $U_0$ and $U_1$ with $\Gamma$-invariant norms and $\Gamma$-equivariant smooth map $\phi : S(U_0) \to U_1$ from the unit sphere of $U_{0}$ satisfying the following conditions:
\begin{itemize}
\item The group $\Gamma$ acts freely on $U_0 \setminus \{0\}$.
\item The map $\phi : S(U_0) \to U_1$ is transvers to $0 \in U_{1}$.
\item The $\Gamma$-manifold $\phi^{-1}(0)$ bounds a compact manifold acted by $\Gamma$ freely, as a spin $\Gamma$-manifold. 
\item As $\Gamma$-representation spaces, $U_i$ is isomorphic to $\tilde{\R}^{l_i} \oplus \quat^{m_i}$ for $i=0$ and $1$, where $l_0=0,  \ l_1=b^+(M)$ and $2m_0-2m_1= - \ind_{\C} D_{A_0}$.
\end{itemize}
By the fourth condition we can write
\[
U_0 =  \quat^{m_0}, \ U_1 = \tilde{\R}^{l_1} \oplus \quat^{m_1}.
\]
On the other hand, we have a smooth map $\phi : S(U_0) \to U_1$ which is  transverse to $0$. 
By the definition, we have the equality
\[
w(U_0,U_1) = [\phi^{-1}(0)]  \in \Omega^{\text{spin}}_{\Gamma ,\text{free}},
\]
By the third condition, the class $w(U_0,U_1)=0$. 
Therefore we apply \cref{Kametani} and obtain an element $\alpha \in  KO_{\Gamma}^{r_1-r_0}(pt)$ such that 
\[
e( \tilde{\R}^{l_1})e( \quat^{m_1}) =e( \tilde{\R}^{l_1} \oplus \quat^{m_1}) = \alpha e(\quat^{m_0}),
\]
where $\dim U_0 =4m_0=  r_0$ and $\dim U_1= 4m_1+l_1 = r_1$.
Now we apply \cref{Furuta Kametani} and get the inequality
\[
0\leq 2(m_1-m_0) +l_1-2.
\]
This implies that 
\[
 \ind_{\C} D^+_{A_0} +2 \leq b^+(M).
 \]
 By combining \cref{cor of W formula} and \cref{Lin}, we have 
 \begin{align}
 \label{eq; ineq case 1}
 h(Y,\frakt) - \frac{\sigma (M) }{8}+2 \leq b^+(M).
 \end{align}
 
The second case is when $b^+(M)$ is odd.
In this case we can obtain the estimate \eqref{eq; ineq case 1} for $M \# S^2 \times S^2$ instead of $M$ itself.
This implies that
\[
h(Y,\frakt)-\frac{\sigma (M)}{8} +1 \leq b^+(M).
\]
 
Lastly we consider the case that  $b^+(M)=0$. In this case, by \cref{reducible}, we have an unique reducible element in $\M_{k,\delta,h}(Z)$ for any $h$.  We can choose $h$ such that $\widetilde{\M}_{k,\delta,h}(Z)$ has a structure of a $2\ind_\C D^+_{A_0}$-dimensional manifold. The proof is essentially the same as in the case for oriented closed $4$-manfolds. (See \cite{Mo03}) 
Since the $S^1$-action on the reducible $[(A_0, 0 ) ]$ is identified with the standard action $S^1$ on $\C^{\ind_\C D^+_{A_0}}$ by the Kuranishi model along $[(A_0, 0 ) ]$, we have a compact manifold $\widetilde{\M}_{k,\delta,h}(Z)/S^1$ with one singularity modeled on the cone on $\C P^{\ind_\C D^+_{A_0}-1}$ (see \cref{cpt}).
Then we have 
\[
2\ind_\C D^+_{A_0}   \leq 0 .
\]
 By combining \cref{cor of W formula} and \cref{Lin}, we have 
 \[
h(Y,\frakt)-\frac{\sigma (M)} {8}   \leq 0 = b^+(M).
\] 
Therefore we have the conclusion.
\end{proof}

\begin{rem}
D.~Veloso~\cite{Ve14} considered boundedness of the monopole map for periodic-end 4-manifolds which admit PSC metric on the ends.
It seems that this argument shall also provide a similar conclusion.
The authors would like to express their deep gratitude to Andrei Teleman for informing them of  Veloso's argument.
\end{rem}

\begin{rem} 
In \cite{FK05}, Furura--Kametani showed a $10/8$-type inequality which is stronger than usual 10/8 inequality (Theorem 1 in \cite{Fu01}).
Since their method uses the divisibility \eqref{divisibility}, by using our method, it seems that one can prove such a stronger type of the inequality.
\end{rem}

\section{Examples}\label{examples}
In this section, we first give several constructions of homology $S^1\times S^3$'s. We then give examples of oriented homology $3$-spheres satisfying $\psi([Y])>0$.  At the end of this section, we see a family of homology $S^1\times S^3$'s which cannot have PSC metrics and compare our method with several known obstructions to PSC metrics. 
Recall that, for an integral homology $3$-sphere, there uniquely exists a spin structure on it up to isomorphism, and if $Y$ is an integral homology $3$-sphere, we write $\psi(Y), \epsilon(Y)$ and $h(Y)$ for $\psi(Y,\frakt), \epsilon(Y,\frakt)$ and $h(Y,\frakt)$ for the unique spin structure $\frakt$ on $Y$.

\subsection{Examples of homology $S^1\times S^3$'s}
In this subsection, we construct families of homology $S^1\times S^3$'s. 
\subsubsection{Examples from algebraically canceling pairs}
Let $Y$ be a homology $3$-sphere and $G$ a finitely generated perfect group with a presentation 
\[
G \cong \langle x_1, \cdots, x_n\rangle  / \langle r_1, \cdots, r_m \rangle .
\]
Let us impose the following minimality condition on this presentation:
let $\pi : \langle x_1, \cdots, x_n\rangle \to \Z^{n}$ be the abelianization.
We suppose that $\pi(r_{1}), \ldots, \pi(r_{m})$ are linearly independent over $\Z$.
Take a framed $n$-component link  $L=k_1\cup \cdots \cup k_m$ in $Y$. Fix $1$-handles $\{h_i^1\}_{i=1}^n$ and 2-handles $\{h_i^2\}_{i=1}^m$ attached to $D^3$ in $Y$ representing generators $\{x_i\}_{i=1}^n$ and relations $\{r_j\}_{j=1}^m$. We take the connected sums of $k_i$ and the attaching spheres of $h^2_i$ and obtain new $2$-handles $\{\widetilde{h}_i^2\}_{i=1}^m$ attached to $ Y$. We call $H =(\{h_i^1\}_{i=1}^n, \{\widetilde{h }_i^2\}_{i=1}^m)$ an {\it algebraically canceling pair}. 
We denote by $W(Y, H)$ the trace of the surgery of $Y$ along $H$ and by $Y'$ the boundary component of $W(Y,H)$ which is different from $Y$. Note that $W(Y, H)$ gives a homology cobordism from $Y$ to $Y'$.
Taking the double of $W(Y,H)$, we obtain a homology $S^1\times S^3$ denoted by $X(Y,  H)$.
 
The $1$- and $2$-handle pair of figure~2 in \cite{ST18} gives 
an example of a pair $h^1$ and $\widetilde{h}^2$ in the case when $G$ is the trivial group $\langle x_1 | x_1 \rangle $.
Note that, if we take the link $L$ complicated enough, the fundamental group of the cobordism $W(Y, H)$ may also be complicated.
As other examples of $h^1$ and $h^2$ in the case of $G\cong \langle x_1 | x_1 \rangle $, we can use the $1$- and $2$-handles which give the corks $W_n$, $W_{m,n}$ and $\overline{W}_n$ given in \cite{AY08}.

\subsubsection{Examples from $ 2$-knots}
Next, we consider homology $S^1\times S^3$'s obtained by surgeries along $2$-knots. 
Let $F$ be a 2-knot in $S^4$.
Let $\widetilde{F}: S^2 \times D^2 \to S^4$ be a tubular neighborhood of $F$.
Note that $\widetilde{F}$, which gives a framing of $F$, is unique up to isotopy.
We regard $\widetilde{F}$ as an attaching map of a $5$-dimensional $3$-handle $D^3 \times D^2$. Via the handle attaching along $\widetilde{F}$, we obtain a 5-dimensional compact cobordism 
\[
X(F):=   S^4 \times [0,1] \cup_{ \widetilde{F}} D^3 \times D^2.
\]
The boundary $\del X(F)$ consists of two connected components.
Let $V(F)$ denote the component which is not $S^{4} \times \{0\}$.
One can see that $V(F)$ is a homology $S^1\times S^3$. It is natural to ask when $V(F)$ admits a PSC metric. Our method gives a sufficient condition to obstruct PSC metric by using Seifert surfaces of $F$.
Now we recall the notion of Seifert surfaces of $F$. 
\begin{defi} 
We call a punctured orientable $3$-manifold $\Sigma'(F)$ a {\it Seifert surface} of $F$ if there exists an embedding $h: \Sigma'(F) \to S^4$ such that $h|_{\partial (\Sigma'(F))} = F$. 
\end{defi}
By capping the puncture of $\Sigma'(F)$, we obtain a closed 3-manifold $\Sigma(F)$.  Once we have a Seifert surface $\Sigma'(F)$ of $F$, we can construct an embedding $h': \Sigma(F) \to V(F)$ and choose a pair of orientations of the manifolds  $V(F)$ and $\Sigma(F)$ such that $\Sigma(F)$ is a cross-section of $V(F)$. We choose such orientations of  $V(F)$  and $\Sigma(F)$, and a spin structure $\mathfrak{s}(F)$ of $V(F)$.

\subsection{Examples of homology $3$-spheres with $\psi>0$}
To find explicit examples of $Y$ with $\psi(Y)>0$, which is a main assumption in \cref{cor psi examples}, we shall consider Brieskorn 3-manifolds. 
N.~Saveliev showed in \cite{Sa98} that $-\Sigma(p,q,pqm+1)$ for relatively prime numbers $p,q \geq 2$ and for odd $m$ bounds compact spin $4$-manifolds which violate the 10/8-inequalities.
This result gives us many examples of $Y$ with $\psi(Y)>0$.

\begin{ex}
\label{example}
Let $(p,q)$ be a pair of relatively prime numbers satisfying $A-B>1$ in Table~1 of \cite{Sa98}, where $A$ and $B$ denote some natural numbers defined in \cite{Sa98}.
Let $m$ be an odd positive integer, and $j$ be a positive integer.
In general, for an oriented manifold $N$, let $-N$ denote the same manifold with the reversed orientation.
In \cite{Sa98} Saveliev showed that $-\Sigma(p,q,pqm+1)$ bounds a compact simply connected smooth $4$-manifold whose intersection form is given by
\[
a (-E_8) \oplus b \left(\begin{matrix}0&1\\1&0\end{matrix}\right)
\]
for some $(a,b)$ satisfying $A\leq a$, $b \leq B$.
Besides this bounding, $\Sigma(p,q,pqm+1)$ bounds both negative and positive definite simply connected $4$-manifolds. (See \cite{Fr02}. Note that this holds also for even $m >0$.) 
It follows from this fact that the Fr\o yshov invariant satisfies
\begin{align}
\label{h is zero}
  h(-\Sigma(p,q,pqm+1))= 0
\end{align}
using the inequality appearing in Theorem~4 in \cite{Fr10}.
Thus we have
\[
\psi([\#_j (- \Sigma(p,q,pqm+1))])>0
\]
because of the equality \eqref{h is zero} and above Saveliev's bounding.

For example, for $j>0$ and odd $m$, let $Y$ be a $3$-manifold given as one of 
\begin{align}
\label{series of ex}
\begin{cases}
\#_j (- \Sigma(4,7,28m+1)),\\
\#_j (- \Sigma(4,15,60m+1)), \text{ and}\\
\#_j (- \Sigma(4,17,68m+1)).
\end{cases}
\end{align}
Then, we have $\psi([Y])>0$.
\end{ex} When $Y$ is an oriented integral homology $3$-sphere, $\epsilon(Y)$ is a homology cobordism invariant.
So we get a map
\begin{align}
\psi : \Theta^3 \rightarrow \Z,
\label{psi from theta}
\end{align}
where the group $\Theta^3$ is the homology cobordism group of oriented integer homology $3$-spheres.
Let us define the subsemigroup
\[
\Pi := \Set{[Y] \in \Theta^3 | \psi([Y])>0}
\]
of $\Theta^3$.
It is easy to show that, for $[Y] \in \Theta^3$ and $[Y']\in \Pi$, there exists sufficiently large $N>0$ such that $[Y] + N [Y'] \in \Pi$.
This property of $\psi$ gives the following example.

\begin{ex} \label{connected sum}
 Saveliev~\cite{Sa97} and Manolescu~\cite{Ma14} constructed the following spin boundings: 
\begin{itemize} 
\item $-\Sigma ( 2,q,2qk+1) = \partial ( -\left(\frac{q+1}{4}\right) E_8 \oplus  \left(\begin{matrix}0&1\\1&0\end{matrix}\right))$ and
\item $\Sigma (2,3,11) = \partial (-2E_8 \oplus 2  \left(\begin{matrix}0&1\\1&0\end{matrix}\right))$.
\end{itemize}

Based on these results, we can show 
\begin{align}\label{ineq2}
\psi (-\Sigma ( 2,q,2qk+1)) \geq \left(\frac{q+1}{4}\right)-2 \text{ and } \psi (\Sigma (2,3,11) ) \geq 0.
\end{align}
Moreover, for any homology $3$-sphere $Y$, there exists $N\gg 0$ such that 
\begin{itemize}
\item  $\psi (Y \# (-\Sigma ( 2,q,2q+1))) >0 $ and 
\item $\psi (Y \# ( \#_q (- \Sigma(4,7,29)))) >0 $
\end{itemize} 
for $q>N$. This is shown by using $\psi(-\Sigma(4,7,29))>0$ and \eqref{ineq2}.
\end{ex}

\subsection{Homology $S^{1} \times S^{3}$'s admitting no PSC metrics}
We give examples of homology $S^{1} \times S^{3}$'s for which we can show the non-existence of PSC metrics using our method, more precisely \cref{cor psi examples}.

\subsubsection{Results for $X(Y,H)$}
\begin{theo}\label{largest ex} Suppose $Y$ is an oriented homology $3$-sphere with $[Y] \in \prod$.
Then, for any algebraically canceling pair $H$ for $Y$, $X(Y, H)$ does not admit a PSC metric.
\end{theo}

\begin{proof}
This theorem follows from \cref{cor psi examples} and the fact that $X(Y,H)$ contains $Y$ as a cross-section.
\end{proof}

\begin{ex}
\label{ex: diagram ex} 
As a concrete example, we put $Y=-\Sigma(4,7,29)$, 
\[
G=\langle x_1, x_2, x_3 | x_1, x_2^5x_3^{-2}, (x_2x_3)^3x_3^{-2} \rangle,
\]
which is isomorphic to $A_5$, the alternating group of degree 5.
We take pairs of $1$- and $2$-handles $\{h^1_i\}_{1 \leq i \leq 3}$ and $\{\tilde{h}^2_i\}_{1 \leq i \leq 3}$ as follows. 
 First, we take a Seifert invariant of $Y$ as $b=0$, $b_1=-1$, $b_2=2$ and $b_3=-1$. (See, for example, Equation (1.5) on page 3~\cite{Sa02}.)
 Then $\Sigma(4,7, 29)$ is described as the surgery of the four component link which is obtained as the complement of the $1$-framed 2-handle and the 1-handle in \cref{diag1}, which is also given in  Fig.~1.1 on page 3~\cite{Sa02}.
 The pair $h^1_1$ and $\tilde{h}^2_1$ are given as the $1$-framed 2-handle and the 1-handle in \cref{diag1}.
\begin{figure}[htbp]
\begin{center}
\includegraphics[scale= 0.15]{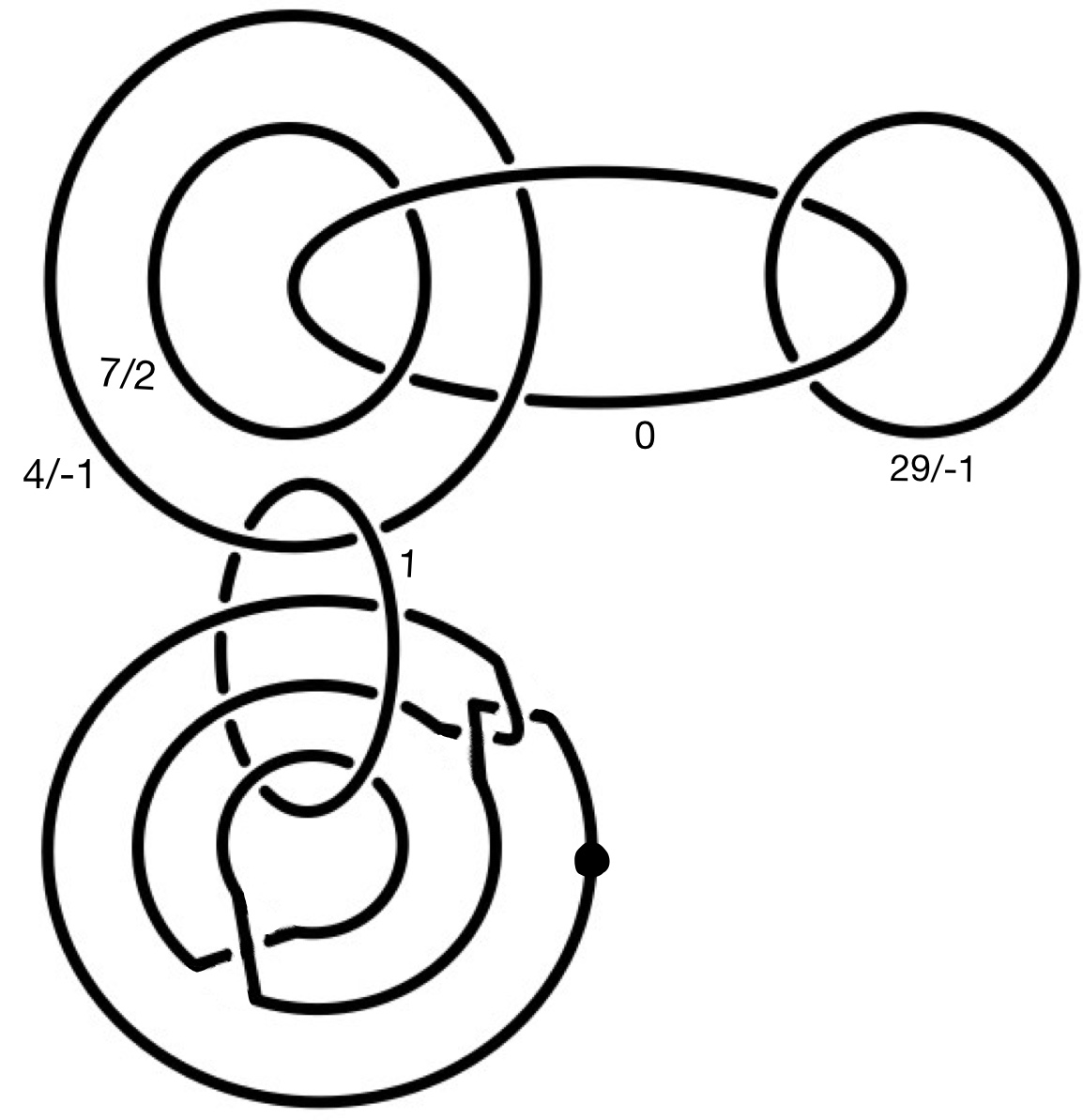}
\caption{A diagram of $\Sigma(4,7, 29)$ and $h^1_1$, $\tilde{h}^2_1$ }\label{diag1}
\end{center}
\end{figure}
We take other handles $h^1_2$, $h^1_3$, $h^2_2$ and $h^2_3$ representing $\langle x_2, x_3 | x_2^5x_3^{-2}, (x_2x_3)^3x_3^{-2}\rangle$ so that these are attached to an embedded disk $D^3 \subset Y$ which is disjoint from the diagram given in \cref{diag1}.
For $i=2,3$, let $\tilde{h}^{2}_{i}$ be the connected sum of $h^2_i$ with the trivial knot.
Then the pair $H:= (\{h^1_i\}_{1 \leq i \leq 3}, \{\tilde{h}^2_i\}_{1 \leq i \leq 3})$ is an algebraically canceling pair. 
It follows from \cref{connected sum} and
\cref{largest ex} that $X(Y, H)$ for these $Y$ and $H$ cannot admit a PSC metric. 

We note that our example $X(Y, H)$ is not diffeomorphic to the mapping torus of a closed $3$-manifold with respect to a self-diffeomorphism on it. 
It follows from the van Kampen theorem that the fundamental group of $W(Y, H)$ is isomorphic to the free product $\pi_1(Y) * G$. Put $W_0:= -W(Y, H)\cup_Y W(Y, H)$ and $W[0,n] := W_0 \cup_{Y'}W_1   \cup_{Y'} \cdots \cup_{Y'} W_n$, where $W_n$ is a copy of $W_0$ for each $n$. 
The van Kampen theorem implies
\begin{align}\label{infty}
\sup_{n\in \mathbb{N}} \rank (\pi_1 (W[0,n] ) )= \infty.
\end{align}
 Suppose $X(Y, H)$ is diffeomorphic to the mapping torus of a $3$-manifold $\widehat{Y}$ and a self-diffeomorphism $h : \widehat{Y} \to \widehat{Y}$. Then the $n$-fold cyclic covering space of  $X(Y, H)$ is given by the mapping torus $X_{h^n}(\widehat{Y})$ of $\widehat{Y}$ with respect to $h^n : \widehat{Y} \to \widehat{Y}$. Let $\tilde{Y} \subset X_{h^n}(\widehat{Y}) $ be a lift of $Y$ for the covering projection $X_{h^n}(\widehat{Y}) \to X(Y, H)$. The uniqueness of covering spaces implies that $\overline{X_{h^n}(\widehat{Y}) \setminus \tilde{Y}} \cong W[0,n-1]$.
However, one can see 
 \[
 \sup_{n\in \mathbb{N}} \rank (\pi_1 (\overline{X_{h^n}(\widehat{Y}) \setminus \tilde{Y}}  ) ) < \infty. 
 \]
 This contradicts to \eqref{infty}. 
\end{ex}

\subsubsection{Connected sum along $S^1$} 
Let $(X_1,\fraks_{1})$ and $(X_2,\fraks_{2})$ be spin rational homology $S^1\times S^3$'s.
We fix an embedding $f_i$ from $S^1$ to $X_i$ representing a fixed generator of $\Hom(\pi_{1}(X_{i}), \Z) \cong H^{1}(X_i;\mathbb{Z}) \cong \Z$ for $i=1,2$.
Assume that the pulled-back spin structures $f_{1}^{\ast}\fraks_{1}$ and $f_{2}^{\ast}\fraks_{2}$ on $S^{1}$ are isomorphic each other.
We also fix a tubular neighborhood of $f_i(S^1)$ and a trivialization $g_i: S^1 \times D^3 \to X_i$ of $f_i$ for each $i$.
If we choose an orientation reversing diffeomorphism $\xi $ from $S^1 \times S^2$ to itself, we obtain a diffeomorphism \[
\xi^*:= g_2|_{S^1\times \partial D^3} \circ \xi \circ (g_1|_{g_1(S^1\times \partial D^3)}) ^{-1} : g_1(S^1\times \partial D^3) \to g_2(S^1\times \partial D^3) .
\]
We define the {\it connected sum of $X_1$ and $X_2$ along $S^1$ via $\xi^*$} by 
\[
X_1 \#_{\xi^*} X_2 := (X_1 \setminus \text{int} (\im g_1)) \cup_{\xi^*} (X_2 \setminus \text{int}(\im g_2)) ,
\]
where $\text{int} (\im g_i)$ is the interior of $\im g_i$ in $X_i$ for $ i=1,2$.
We can show that $X_1 \#_{\xi^*} X_2$ is a rational homology $S^1 \times S^3$ and inherits a spin structure from $\fraks_{1}$ and $\fraks_{2}$.

 \cref{connected sum} implies the following fact.
 
\begin{cor}
\label{cor: connected sum along s1}
Let $(X,\fraks)$ be a spin rational homology $S^1 \times S^3$ which has some homology $3$-sphere as a cross-section.
Then there exists $N>0$ such that
\[
X \#_{\xi^*}  (S^1\times (-\Sigma ( 2,q,2q+1)))
\]
and
\[
X \#_{\xi^*}  (S^1\times (\#_q(- \Sigma(4,7,29))))
\]
do not admit PSC metrics for all $q>N$ and some $\xi^*$.
\end{cor}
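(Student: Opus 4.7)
The plan is to reduce the corollary to \cref{cor psi examples} combined with \cref{connected sum}, by exhibiting a cross section of $X\#_{\xi^*}(S^1\times Y_1)$ of the form $Y_0\# Y_1$, where $Y_0$ is the given homology $S^3$ cross section of $X$ and $Y_1$ is the relevant Brieskorn-type homology sphere.

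First I would choose the gluing data so that the connected sum along $S^1$ inherits a natural cross section. Let $Y_0$ be the homology $S^3$ cross section of $X$. Since $[Y_0]\in H_3(X;\mathbb{Q})$ and the $H_1(X;\mathbb{Q})$ generator are Poincar\'e dual, by general position I may take the embedding $f_1:S^1\to X$ representing the $H_1$ generator so that $f_1(S^1)$ meets $Y_0$ transversely in a single point $p_0$, with a tubular neighborhood identification $g_1:S^1\times D^3\hookrightarrow X$ satisfying $g_1^{-1}(Y_0)=\{1\}\times D^3$. On the second factor, take $f_2:S^1\to S^1\times Y_1$ to be the standard $S^1$ factor (through any $p_1\in Y_1$), with $g_2:S^1\times D^3\hookrightarrow S^1\times Y_1$ satisfying $g_2^{-1}(\{p_1\}\times Y_1)=\{1\}\times D^3$.

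Second, I would choose $\xi:S^1\times S^2\to S^1\times S^2$ so that $\xi(\{1\}\times S^2)=\{1\}\times S^2$ and $\xi$ restricts there to an orientation-reversing diffeomorphism of $S^2$. With this choice, inside $X\#_{\xi^*}(S^1\times Y_1)$ the two pieces $Y_0\setminus\mathrm{int}(D^3)$ and $Y_1\setminus\mathrm{int}(D^3)$ are glued along $S^2$ to form a closed submanifold diffeomorphic to $Y_0\# Y_1$. This submanifold is separating, and by a Mayer--Vietoris computation in the rational homology $S^1\times S^3$ $X\#_{\xi^*}(S^1\times Y_1)$ (a fact already stated in the text) it represents a generator of $H_3(X\#_{\xi^*}(S^1\times Y_1);\mathbb{Z})$; that is, $Y_0\# Y_1$ is a cross section in the sense of the introduction.

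Third, I would apply \cref{connected sum} to the homology sphere $Y_0$: there exists $N>0$ such that
\[
\psi([Y_0\#(-\Sigma(2,q,2q+1))])>0
\qquad\text{and}\qquad
\psi([Y_0\#(\#_q(-\Sigma(4,7,29)))])>0
\]
for all $q>N$. Then \cref{cor psi examples}, applied to the rational homology $S^1\times S^3$ $X\#_{\xi^*}(S^1\times Y_1)$ with cross section $Y_0\# Y_1$, yields the non-existence of a PSC metric. The main obstacle is the second step: arranging the tubular neighborhoods and the gluing diffeomorphism $\xi^*$ so that $Y_0\#Y_1$ really appears as a cross section and represents the dual of the $H_1$ generator; once this is set up, everything else is a bookkeeping application of previously established results.
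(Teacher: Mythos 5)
Your proposal is correct and follows essentially the same route as the paper: apply \cref{connected sum} to get $\psi>0$ for the connected-sum homology sphere, realize it as a cross section of $X\#_{\xi^*}(S^1\times Y_1)$ for a suitable $\xi^*$, and conclude via \cref{cor psi examples}. Your second step merely spells out the gluing arrangement that the paper's proof states without detail ("we can choose \dots as a cross section \dots for some $\xi^*$"), which is a reasonable elaboration rather than a different argument.
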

\begin{proof} 
Let $Y$ be a homology $3$-sphere with $[Y]=1\in H_3(X;\Z)$.
Then \cref{connected sum} implies that there exists $N\gg0$ such that 
\begin{itemize}
\item  $\psi (Y \# (-\Sigma ( 2,q,2q+1))) >0 $ and 
\item $\psi (Y \# ( \#_q(- \Sigma(4,7,29)))) >0 $
\end{itemize} 
for $q>N$. Note that we can choose
\[
Y \# (-\Sigma ( 2,q,2q+1))\ \text{(resp.}\ Y \# ( \#_q(- \Sigma(4,7,29))))
\]
as a cross-section of
\[
X \#_{\xi^*}  (S^1\times (-\Sigma ( 2,q,2q+1)))\ \text{(resp.}\ X \#_{\xi^*}  (S^1\times (\#_q(- \Sigma(4,7,29))))
\]
for some $\xi^*$. Now we can use \cref{cor psi examples}, and this proves the \lcnamecref{cor: connected sum along s1}.
\end{proof}

\subsubsection{Results for surgeries of $2$-knots}
\begin{cor}
\label{Seifert}
If $\Sigma(F)$ is a rational homology $3$-sphere and $\psi ( \Sigma(F), \mathfrak{s}|_{\Sigma(F)} ) >0$, then $V(F)$ does not admit a PSC metric.
\end{cor}
\begin{proof}
This corollary follows from \cref{cor psi examples} and the fact that $V(F)$ contains $\Sigma(F)$ as a cross-section.
\end{proof}

For a given (1-)knot $K$ and an integer $k$, we have a $2$-knot $F_k (K)$ called a $k$-twisted spun knot of $K$. (See \cite{Z65}.) In \cite{Z65}, E.C. Zeeman showed the following fact. 
\begin{prop}[Zeeman~\cite{Z65}, THE MAIN THEOREM on page~486]\label{Zeeman}
For any knot $K$ and a non-zero integer $k$, we can choose the $k$-th branched covering space $\Sigma_k(K)$ of $K$ as a Seifert surface of $F_k (K)$.
\end{prop}
By combining \cref{Seifert} and \cref{Zeeman}, we obtain the following corollary. 
\begin{cor} Let $K$ be a knot and $k$ be a non-zero positive integer.
If we have $\psi (\Sigma_k(K), \mathfrak{s})>0$ for any spin structure $\mathfrak{s}$ on $\Sigma_k(K)$, then $V(F_k(K))$ does not admit a PSC metric.
\end{cor}
For a pairwise relatively prime triple $(p,q,r)$, it is shown that the Brieskorn 3-manifold $\Sigma(p,q,r)$ is the cyclic branched $r$-hold covering space of the $(p,q)$-torus knot $T(p,q)$. Thus we obtain the following examples. 
\begin{ex} 
Let $T(4,7)$ be the $(4,7)$-torus knot.
For an odd positive integer $m$, we consider the $(28m+1)$-fold twisted spun knot $ F_{28m+1} (T(4,7))$. 
By \cref{example}, one can show that $V(F_{28m+1} (T(4,7)))$ does not admit a PSC metric. 
\end{ex}

\subsection{Comparison with other methods} 
We compare our method, particularly \cref{cor psi examples} and \cref{largest ex}, with the Dirac obstruction, enlargeability method, Schoen--Yau's method, and Lin's formula. 
\subsubsection{Dirac obstruction} 
For a given closed spin $n$-manifold $X$, Rosenberg~\cite{R86} defined an element $A(X) \in KO_n ( C^*_\R ( \pi_1 (X) ))$ such that if $X$ admits a PSC metric, then $A(X)=0$. Here, $C^*_\R ( \pi_1 (X) )$ is the real group $C^*$-algebra of $\pi_1 (X)$ and $KO_n ( -)$ is the real $K$-homology of a $C^*$-algebra. 
If one tries to prove \cref{largest ex} using this method, one needs to calculate the class
\[
A(X(Y, H) ) \in KO_4 ( C^*_\R ( \pi_1 (X(Y, H)) )).
\]
However, since $\pi_{1}(X(Y, H))$ may be complicated in general,
 it seems difficult to calculate $A(X(Y, H)) \in KO_4 ( C^*_\R ( \pi_1 (X(Y, H)) ))$ and deduce \cref{largest ex}. 
\subsubsection{Enlargeability}Gromov and Lawson \cite{GL80} introduced the notion of enlargeability which obstructs PSC metrics. 
One can give a large class of homology $S^{1} \times S^{3}$'s for which PSC metrics are obstructed by enlargeability as follows.
Suppose that a $3$-manifold $Y$ is enlargeable. Then, the mapping torus $X_h(Y)$ of $Y$ with respect to a self-diffeomorphism $h$ on $Y$ does not admit a PSC metric since the $\Z$-covering of $X_h(Y)$ is enlargeable.
However, our obstruction can be valid also for a homology $S^{1} \times S^{3}$ which cannot be the mapping torus of a $3$-manifold and a self-diffeomorphism as we have seen in \cref{ex: diagram ex}.

We note that Hanke and Schick (\cite{HS06}) gives a relation between the Dirac obstruction and enlargeablity as follows: if a closed spin $n$-manifold $X$ is enlargeable, then $0\neq A(X) \in KO_n ( C^*_\R ( \pi_1 (X) ))$.

\subsubsection{Schoen--Yau's method}
Schoen--Yau's result~\cite{SY79} implies the following fact.
Let $X$ be a spin rational homology $S^{1}\times S^{3}$ and assume that $X$ admits a PSC metric.
Then there exists a cross-section $Y'$ of $X$ such that $Y'$ also admits a PSC metric.
As a consequence of G.~Perelman's theory (for example, see Theorem 1.29 in \cite{Lee19}), one can show that $Y'$ is diffeomorphic to a $3$-manifold of the form
\begin{align}\label{SY1}
(\#_{i=1}^{m} S^{3}/\Gamma_{i}) \#  (\#_{j=1}^{n} (S^{1} \times S^{2})_j),
\end{align}
where $m,n \geq 0$, each $\Gamma_i$ is a finite subgroup of $SO(4)$ which acts on $S^3$ freely, and $\#_{j=1}^{n} (S^{1} \times S^{2})_j$ is the connected sum of $n$ copies of $S^1 \times S^2$.
Here, if $m=0$, let us replace $(\#_{i=1}^{m} S^{3}/\Gamma_{i})$ with $S^{3}$, and similarly $(\#_{j=1}^{n} (S^{1} \times S^{2})_j)$ with $S^{3}$ if $n=0$.
 For above $X$, we define
 \[
 n(X):= \min  \Set{ b_1 (Y' )  | 
\begin{matrix}
\text{$Y'$ is an orientable cross-section of $X$,} \\
\text{and $Y'$ admits a PSC metric.}
\end{matrix}
}.
 \]
 If $n(X)=0$, then one can obtain a $10/8$-type inequality 
\[
b^+(M) \geq -\frac{\sigma (M) }{8} + h(Y, \mathfrak{t} ) -1
\]
using \cite{Ma14} and the following facts:
 \begin{itemize}
  \item By cutting the total space of a $\Z$-covering space of $X$ along the cross-sections, we can show that there is a rational spin homology cobordism between $Y$ and $\#_{i=1}^{m}S^{3}/\Gamma_{i}$. 
  \item For a spin spherical $3$-manifold $(Y', \mathfrak{t} )$, 
\[
\kappa (Y', \mathfrak{t} ) = \lambda_{SW} ( Y' \times S^1 ,\mathfrak{t} \times ({\rm trivial}) )   = -  h(Y',  \mathfrak{t})
\]
holds. (See Theorem A in \cite{LRS17}.)
  \end{itemize}

 As in our situation, we suppose that  $X$ has a rational homology $3$-sphere $Y$ as a cross-section.
It seems difficult to obstruct the existence of PSC metric for $(X,Y)$ satisfying the following conditions.
Suppose that we have an element $[Y]$ in the rational homology cobordism group $\Theta^3_\Q$ such that $[Y]$ does not belong to the subgroup generated by all spherical $3$-manifolds.
The authors do not know whether such $Y$ exists.
Let us summarize this as a problem:
\begin{problem}Let $H_{\text{sp}}$ be the subgroup in $\Theta^3_\Q$ generated by spherical $3$-manifolds.
 Is the group $\Theta^3_\Q/ H_{\text{sp}}$ trivial? 
\end{problem}
If we have such an example of $Y$, one can show that for any $X$ which is a rational homology $S^1 \times S^3$ containing $Y$ as a cross-section and which has a PSC metric, $n(X)>0$ holds. Therefore we cannot apply the result of \cite{Ma14} immediately.

We also note that Schoen--Yau (Theorem~6 in \cite{SY86}) stated that any closed aspherical $4$-manifold cannot have a PSC metric.
However, for a given $4$-manifold, it is not obvious to see if the $4$-manifold is aspherical, and there seems to be no reason to expect that all $X(Y, H)$ are aspherical.
\subsubsection{Lin's formula}
Lin~\cite{L16} showed the equality \eqref{obst} under the assumption that $X$ admits a PSC metric and $Y$ is a cross-section of $X$.
On the other hand, the mod $2$ reduction of $\lambda_{SW}(X,\fraks)$ coincides with the Rochlin invariant $\mu(Y,\frakt)$ of $Y$~\cite{MRS11}.
Therefore the equality
\begin{align}\label{JLeq1}
h(Y) \equiv \mu (Y) \mod 2
\end{align}
holds if  $X$ admits a PSC metric and $Y$ is a cross-section of $X$, hence this equality \eqref{JLeq1} also gives an obstruction to PSC metric on $X$.
For example, let $X$ be a spin rational homology $S^1\times S^3$ which has $\#_j (\pm \Sigma(p,q,pqm+1))$ as a cross-section for $p,q$ and $m$ satisfying 
 \begin{align}\label{Jlinex}
 -\frac{1}{24}m(p^2-1) (q^2-1) \equiv 1 \mod 2 \text{ and } j \equiv 1 \mod 2.
 \end{align}
Then one can deduce that $X$ does not admit a PSC metric.
 The number $-\frac{1}{24}m(p^2-1) (q^2-1)$ is equal to the Casson invariant $\lambda (\Sigma(p,q,pqm+1))$ (see Example 3.30 in \cite{Sa02}).
Since the Fr\o yshov invariant of $\Sigma(p,q,pqm+1)$ is equal to 0 for all $p$, $q$ and $m$ as we have seen in \eqref{h is zero}, $\#_j (\pm \Sigma(p,q,pqm+1))$ does not satisfy \eqref{JLeq1} under the condition \eqref{Jlinex}. This calculation shows that Lin's method can produce many $4$-manifolds which do not admit PSC metrics.
On the other hand, note that one cannot show the non-existence of a PSC metric on any rational homology $S^1\times S^3$ which has $Y$ given in \eqref{series of ex} as a cross-section using the obstruction obtained from the equality \eqref{JLeq1} since $h(Y)\equiv \mu(Y)\equiv 0$ ${\rm mod}\ 2$ holds.
(Note that our method can be used even in this case, as in \cref{example,ex: diagram ex}.)

Next, we compare our method with Lin's formula for mapping tori of homology $3$-spheres.  Let $(q,r)$ be a pair of relatively prime odd numbers. Let $T(q,r)$ denote the $(q,r)$-torus knot. Note that the double branched cover of $T(q,r)$ is the Seifert manifold $\Sigma (2,q,r)$.
It is known that the signature of $T(q,r)$ is equal to $8\lambda (\Sigma (2,q,r))$ (see Example 5.10 of \cite{Sa02}). 
Let $K$ be $(-T(q,2qk+1)) \# l T(3,11)$ where $q, k$ and $l$ are positive integers satisfying  
\[
q \equiv 3 \mod 4,  \ k \equiv 1 \mod 2,\ l>0 \text{ and } kq^2 = 16l+k.
\]
The double branched cover $\Sigma(K)$ of $K$ is $-\Sigma ( 2,q,2qk+1)\#  l \Sigma (2,3,11)$. (It is known that $\Sigma(K^*)=-\Sigma(K)$ and $\Sigma(K\# J)= \Sigma(K) \# \Sigma(J)$, where $K^*$ is the reflection of $K$.) Let $\tau$ be the involution of the branched cover. We set $X(K)$ as the mapping torus of $\tau$.
In Theorem~C of \cite{LRS18}, Lin--Ruberman--Saveliev showed 
\[
-\lambda_{SW} (X(K) )= \frac{\text{sign} (K)}{8},
\]
where $\text{sign} (K)$ is the signature of $K$.
Therefore we have 
\begin{align*}
\lambda_{SW} (X(K) )& = -  \frac{\text{sign} (K)}{8} \\
& =  \frac{\text{sign} (T(q,2qk+1) )}{8} -   \frac{l\text{sign} (T(3,11))}{8} \\
& =  \lambda (\Sigma ( 2,q,2qk+1)) - l \lambda (\Sigma (2,3,11 )) \\ 
& =  -\frac{1}{8}k (q^2-1)+ 2l = 0 
\end{align*}
by the choice of $k$, $l$ and $q$.
We take $Y(K)=-\Sigma ( 2,q,2qk+1)\#  l \Sigma (2,3,11)$ as a cross-section of $X(K)$.
Since $h(\Sigma(p,q,pqk+1))=0$ for a pair of relatively prime numbers $(p,q)$ and a positive integer $k$, we get $h(Y(K))=0$. Therefore the pair $(X(K), Y(K))$ satisfies \eqref{obst}. Thus, we cannot obstruct PSC metrics of $X(K)$ using \eqref{obst}.
However, we have $\psi (Y(K))$ is positive by \eqref{ineq2} if $q \geq 11$. Thus, we can see that $X(K)$ does not admit a PSC metric by using \cref{cor psi examples}.

\begin{rem}
Lin's method~\cite{L16} and ours obstruct PSC metric on homology $S^{1}\times S^{3}$'s only in terms of topological properties of cross-sections of them.
In our case, the obstruction is dominated by the subsemigroup
$\Pi$ of $\Theta^3$, so this subsemigroup might be an interesting object to study.
In this \lcnamecref{examples}, we gave many examples of elements of $\Pi$.
Moreover, as we explained, for any element $[Y] \in \Theta^3$ and $[Y']\in \Pi$, there exists a natural number $N \gg 0$ such that $[Y] \# N [Y'] \in \Pi$.
This may suggest that $\Pi$ is a large subsemigroup of $\Theta$, and therefore it is natural to ask the following question:

\begin{problem}
Study the subsemigroup $\Pi$.
For example, how large is $\Pi$ in $\Theta^3$?
More precisely, is there a sequence of elements of $\Pi$ which generates $\Z^{\infty}$ in $\Theta^3$?
\end{problem}
\end{rem}
  
\bibliographystyle{plain}
\bibliography{tex}

\end{document}